\DeclareMathOperator{\rank}{rank}
\DeclareMathOperator{\im}{im}
\DeclareMathOperator{\Cl}{Cl}
\DeclareMathOperator{\codim}{codim}
\DeclareMathOperator{\Spec}{Spec}
\DeclareMathOperator{\lcm}{lcm}
\newcommand{\Pp}{\mathbb{P}}
\newcommand{\C}{\mathbb{C}}
\newcommand{\T}{\mathbb{T}}
\newcommand{\Z}{\mathbb{Z}}
\newcommand{\Q}{\mathbb{Q}}
\newcommand{\K}{\mathbb{K}}
\newcommand{\Oo}{\mathcal{O}}
\newcommand{\Rr}{\mathcal{R}}
\newtheorem{lem}{Lemma}[section]
\newtheorem{proposition}[lem]{Proposition}
\newtheorem{theorem}[lem]{Theorem}
\theoremstyle{definition}
\newtheorem{definition}[lem]{Definition}
\newtheorem{construction}[lem]{Construction}
\newtheorem{example}[lem]{Example}
\newtheorem{reminder}[lem]{Reminder}
\newtheorem{remark}[lem]{Remark}
\newtheorem{corollary}[lem]{Corollary}
\newtheorem{convention}[lem]{Convention}
\newtheorem{classification}[lem]{Classification list}
\title[Gorenstein Fano threefolds with a $\K^*$-action]{gorenstein fano threefolds of picard number one with a $\K^*$-action and maximal orbit quotient $\Pp_2$}
\author{Marco Ghirlanda}
\date{}
\begin{document}

\maketitle
\begin{abstract}
    We classify the non-toric, $\Q$-factorial, Gorenstein,
Fano threefolds of Picard number one with an effective
$\K^*$-action and maximal orbit quotient~$\Pp_2$.
\end{abstract}

\section{Introduction}
\addcontentsline{toc}{chapter}{Introduction}

By a \emph{Fano variety} we mean a normal, projective variety~$X$,
defined over an algebraically closed field~$\K$ of characteristic
zero such that $X$ admits an ample anticanonical divisor.
We contribute to the classification of singular Fano
threefolds $X$ endowed with an effective algebraic torus
action $\T \times X \to X$.
Recall that for $\dim(\T)=3$, that is the \emph{toric case},
there is the complete classification~\cite{MR2760660} 
in the case of at most canonical singularities.
Moreover, for $\dim(\T)=2$ we have complete classifications
in Picard number one for the cases of at most terminal
singularities~\cite{MR3509936} and for the case of
$\Q$-factorial Gorenstein log terminal threefolds ~\cites{MR4509960}.

We focus on $\dim(\T) = 1$, that means $\T = \K^*$. Our
approach uses the \emph{maximal orbit quotient}
from~\cite{MR4031105}.
Consider the invariant open subset $X_0 \subset X$
consisting of all points $x \in X$ with finite isotropy
group $\T_x$.
The inclusion of fields $\K(X)^{\T} \subset \K(X)$ 
gives rise to a rational map
$\pi \colon X_0 \dashrightarrow Y$ 
which is a surjective morphism up to
codimension two w.r.t. $X_0$ and $Y$.
We further focus on the case $Y=\Pp_2$. We fix a divisor
\[
C=C_0 + C_1 + C_2 + C_3 + \dots + C_n,
\]
where $C_0, C_1, C_2 \subset \Pp_2$ are lines in general position and $C_3, \dots, C_n\subset \Pp_2$ are irreducible curves of degrees $d_3,\dots,d_n$, such that all critical values of $\pi$ lie in the support of $C$ and $C$ is minimal with these properties. Furthermore, we have
$$
D_i
=
\pi^*(C_i)
 =
\sum_{j=1}^{n_i} l_{ij} D_{ij},
$$
with $D_{ij} \subset X$ prime divisors and $l_{ij}\in\Z_{>0}$. 
The situation with all $C_i \subset \Pp_2$
being lines has been studied by Hausen, Hische
and Wrobel who provided explicit
classifications~\cites{MR4031105,HW,HW2},
even for higher dimensional analoga.
In the present article, we settle the case
of non-toric, $\Q$-factorial, Gorenstein Fano threefolds of Picard
number one with an effective $\K^*$-action and
$Y=\Pp_2$.
Proposition \ref{we get all} shows that we have to deal
with the following three types:
\[
\begin{array}{lll}
\text{A:} \quad
& 
\pi^*(C_i) = l_{i1}D_{i1}, \ i=0,\ldots,n, 
%\\
&
X^{\K^*} \text{ hosts exactly one prime divisor},
\\[8pt]
\text{B:} 
& 
\pi^*(C_0) = l_{01}D_{01} + l_{02}D_{02},
&
\pi^*(C_i) = l_{i1}D_{i1}, i=1,\ldots,n,
\\[2pt]
&
\codim_X(X^\T) \ge 2,    
\\[8pt]
\text{C:} 
& 
d_3\geq 2,\quad\pi^*(C_3) = l_{31}D_{31} + l_{02}D_{32},
&
\pi^*(C_i) = l_{i1}D_{i1}, i=0,1,2,4,\ldots,n,
\\[2pt]
&
\codim_X(X^\T) \ge 2.
\end{array}
\]

Our main result is the following classification theorem.
The families and their varieties are explicitly presented
in terms of Cox ring data in Section \ref{lists section}.
Moreover, we list there the associated anticanonical degrees
and Hilbert functions.

\begin{theorem} \label{big theorem}
We obtain $154$ families of non-toric, $\Q$-factorial, Gorenstein
Fano threefolds of Picard number one with an effective
$\K^*$-action and maximal orbit quotient~$\Pp_2$.
Listed according to the possible $n$ and $d_3,\dots,d_n$,
the numbers of families of Type A, B, C are given by:
{\scriptsize
\begin{center}
\begin{longtable}{c | c | c | c | c }
        $n$ & $d_3,...,d_n$ & \# Type A & \# Type B & \# Type C
        \\
        [5pt]
        \hline
        &&&\\
        $3$ & $1$ & 17 & 56 & 0 \\
        [5pt]
        \hline
        &&&\\
        $3$ & $2$ & 14 & 8 & 22 \\
        [5pt]
        \hline
        &&&\\
        $3$ & $3$ & 7 & 2 & 6 \\
        [5pt]
        \hline
        &&&\\
        $3$ & $4$ & 2 & 0 & 3 \\
        [5pt]
        \hline
        &&&\\
        $3$ & $5$ & 1 & 0 & 0 \\
        [5pt]
        \hline
        &&&\\
        $3$ & $6$ & 0 & 0 & 1 \\
        [5pt]
        \hline
        &&&\\
        $4$ & $1,1$ & 4 & 3 & 0 \\
        [5pt]
        \hline
        &&&\\
        $4$ & $1,2$ or $2,1$ & 1 & 0 & 1 \\
        [5pt]
        \hline
        &&&\\
        $4$ & $2,2$ & 2 & 0 & 1 \\
        [5pt]
        \hline
        &&&\\
        $4$ & $2,3$ or $3,2$ & 1 & 0 & 1 \\
        [5pt]
        \hline
        &&&\\
        $5$ & $1,1,1$ & 1 & 0 & 0
        
    \end{longtable}
    \end{center}
    }
\noindent
% Each family is explicitly described in terms of its degree matrix and 
% Cox ring data, see~\ref{lists section}.
% Two threefolds belong to the same family if and only if they share
% the same degree matrix.
Moreover, every non-toric, $\Q$-factorial, Gorenstein Fano threefold of Picard number
one with an effective $\K^*$-action and maximal orbit quotient
$\Pp_2$ belongs to one of these 154 families, and members of different families are not isomorphic as $\K^*$-varieties.
\end{theorem}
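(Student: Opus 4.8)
\emph{Overview and Step 1 (Cox ring format).} The plan is to describe each variety in question through its Cox ring, to turn the hypotheses ``$\Q$-factorial'', ``Gorenstein'', ``Fano'' and ``Picard number one'' into finitely many arithmetic conditions on a short tuple of discrete invariants, to enumerate the solutions, and then to check that the resulting list has no redundancies. By Proposition \ref{we get all} we are in one of the Types~A, B, C. In each of these the machinery of \cite{MR4031105} and \cites{HW,HW2} for $\K^*$-actions with prescribed maximal orbit quotient provides an explicit presentation of the Cox ring $\Rr(X)$ as a $\Cl(X)$-graded algebra: its generators are the canonical sections of the prime divisors $D_{ij}$ together with at most two further generators attached to the $\K^*$-fixed locus, and its relations are the arrangement relations built from the defining equations of the curves $C_i$, $i\ge 3$ — together with the prescribed splitting of $\pi^*(C_3)$ in Type~C — expressed through the coordinates cut out by the three general lines $C_0,C_1,C_2$. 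Thus $X$ is determined by the tuple $(n;d_3,\dots,d_n)$, the multiplicities $l_{ij}$, the coefficients of the curve equations (which vary over a parameter space, accounting for the word \emph{family}), and the degree matrix $Q$ defining the $\Cl(X)$-grading; conversely, any such datum compatible with the format yields a normal projective $\K^*$-variety with maximal orbit quotient $\Pp_2$ of the corresponding type.

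\emph{Step 2 (translating the hypotheses).} Picard number one means $\Cl(X)\cong\Z\oplus\Gamma$ with $\Gamma$ finite, so each generator of $\Rr(X)$ has a positive free degree and the homogeneity of the arrangement relations becomes a system of linear equations tying the $l_{ij}$ to these degrees, while $\Q$-factoriality is equivalent to the grading having rank one over $\Q$. From the presentation one reads off $-\mathcal K_X=\sum_{i,j}\deg(D_{ij})+(\text{degrees of the extra generators})-\sum(\text{degrees of the relations})$. Being Gorenstein forces this class into the subgroup of Cartier classes of $X$, which in Picard number one is governed by the local class groups at the finitely many, combinatorially described singular points; being Fano forces $-\mathcal K_X$ to be a positive multiple of the ample generator. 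Together these conditions pin down the free part of $-\mathcal K_X$ and bound the anticanonical self-intersection $(-\mathcal K_X)^3$ — one checks along the way that the varieties have at worst canonical singularities, whence the classical bound $(-\mathcal K_X)^3\le 72$, though in any event the arithmetic already leaves only finitely many possibilities.

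\emph{Step 3 (finiteness, enumeration, filtering).} From the bound on $(-\mathcal K_X)^3$ and the Gorenstein divisibility conditions one extracts explicit upper bounds on $n$, on $d_3,\dots,d_n$, on the $l_{ij}$ and on the entries of $Q$. Running through the resulting finite set of combinatorial data — sorted exactly by the values of $n$ and $d_3,\dots,d_n$ in the table — and discarding those that fail to define a $\Q$-factorial Gorenstein Fano threefold of Picard number one, or that produce a toric one, leaves the $154$ families with the stated Type~A/B/C counts; this part is computational and is carried out with computer assistance, and it also supplies the anticanonical degrees and Hilbert functions recorded in Section~\ref{lists section}.

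\emph{Step 4 (non-redundancy, and the main obstacle).} An isomorphism of $\K^*$-varieties between two members of the list descends to an isomorphism of maximal orbit quotients carrying critical divisor to critical divisor, hence to a projective automorphism of $\Pp_2$ permuting $C_0,\dots,C_n$ and preserving degrees; tracking its effect on the defining data shows that two of our varieties are $\K^*$-isomorphic precisely when their combinatorial data coincide after such a permutation and a rescaling of coordinates. Since the families are presented with normalized data, it then suffices to separate them by $\K^*$-invariants: the type, the tuple $(n;d_3,\dots,d_n)$, the self-intersection $(-\mathcal K_X)^3$, the value $h^0(X,-\mathcal K_X)$, and — for the few remaining coincidences — the collection of local class groups at the $\K^*$-fixed points. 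I expect the genuine difficulty to be twofold: first, making the passage from ``Gorenstein Fano'' to effective, uniform numerical bounds rigorous across all three types, where the Gorenstein condition interacts delicately with the torsion of $\Cl(X)$ and with the location of the singularities forced by the arrangement; and second, the final separation step, which has to be verified case by case for the handful of families sharing all of the coarser invariants.
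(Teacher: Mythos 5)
Your overall strategy coincides with the paper's (reduce to the explicit Cox-ring format of Types A, B, C via Proposition \ref{we get all}, translate Fano and Gorenstein into arithmetic conditions as in Propositions \ref{fanoprop} and \ref{gorensteincor2}, enumerate, then separate families by invariants), but there is a genuine gap exactly where you yourself locate ``the genuine difficulty'': the finiteness and enumeration step is never actually carried out. You justify finiteness by asserting that the varieties have at worst canonical singularities and invoking the bound $(-\mathcal K_X)^3\le 72$; neither claim is established (canonicity of the singularities is nowhere verified, and that degree bound is a deep theorem, not something one ``checks along the way''), and even granted, a bound on $(-\mathcal K_X)^3$ does not by itself bound the remaining discrete data --- the torsion part of $\Cl(X)$, the multiplicities $l_{ij}$ and the exponents $d_{ij}$ --- so ``running through the resulting finite set'' is not yet a finite procedure. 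The paper's actual work is precisely here: per type, one extracts explicit inequalities directly from the Gorenstein integrality conditions combined with the Fano inequality (e.g.\ $\frac{1+d_{01}}{l_{01}}\le 6$, $\sum_j d_j\le 7$, $n\le 9$ in Type A; the case list of Lemmas \ref{cases lemma} and \ref{cases lemma 2} and the identity of Lemma \ref{unit fraction T2} in Type B; Lemmas \ref{condition 1 type C}--\ref{no solution above 12}, including the delicate exclusion of $l_{32}>12$, in Type C), and then reduces the search to unit-fraction equations solved by the recursive procedure of Remark \ref{unit fraction remark}. Without supplying arguments of this kind, your Step 3 is an intention rather than a proof, and the table of counts cannot be obtained.

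The non-redundancy step is closer to the paper but also overstated: your claim that two varieties of the list are $\K^*$-isomorphic \emph{precisely} when their combinatorial data agree up to a permutation of the $C_i$ and rescaling is not proved (and is more than is needed). What the paper does, and what suffices, is the weaker direction: by Lemma \ref{unique moq} an equivariant isomorphism descends to the maximal orbit quotients, so isomorphic members must share the type, the degree vector $(d_3,\dots,d_n)$ and the isotropy orders, as well as $\Cl(X)$ and the Hilbert function; one then checks on the classification lists that no two families share all these invariants. Your proposed separating invariants ($(-\mathcal K_X)^3$, $h^0(-\mathcal K_X)$, local class groups) would likely also work, but the case-by-case verification you defer is exactly the content that has to be done.
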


\begin{example}[A Fano threefold $X$ in a fake weighted projective space $Z$]
\label{running example 1}
We first construct the ambient space $Z$. Consider the
four-dimensional weighted projective space $W \coloneqq \Pp(1,1,1,1,2)$,
the finite multiplicative group $G \coloneqq \{\pm 1\}^3$ and its 
action on $W$ given in weighted homogeneous coordinates by 
\[
G \times W \ \to \ W,
\qquad  
\zeta\cdot [z] \ = \ [z_0,\, \zeta_1z_1,\, \zeta_2z_2,\, \zeta_3z_3,\, \zeta_1\zeta_2\zeta_3z_4].
\]
Then the quotient $Z := W/G$ is a fake weighted projective space.
In particular, $Z$ inherits from $W$ the structure of a toric
variety, that means that we have an open embedding $\T^4 \subseteq W$ 
such that the group structure of the four-torus $\T^4$ extends to
an action on $W$. Now set
\[
X
\ \coloneqq \
\overline{V_{\T^4}(1+t_2^2+t_3^2+t_4)}
\ \subseteq \
Z,
\]
where $t_2,t_3,t_4$ denote the last three coordinates on $\T^4$.
Observe that the hypersurface $X$ is invariant under scaling the
first coordinate of $\T^4$ and thus comes with a $\K^*$-action.
Note that the inverse image of $X$ under $\gamma \colon W \to W/G = Z$
is given in weighted homogeneous coordinates as 
\[
\gamma^{-1}(X) \ = \ V(z_1^4+z_2^4+z_3^4+z_4^2).
\]  
This allows us to apply the machinery around Construction~\ref{p2cons T1},
showing immediately that the $\K^*$-variety $X$ is a Gorenstein Fano
threefold of Picard number one.
Moreover, we can easily describe the fixed point set of the
$\K^*$-action; in terms of weighted homogeneous coordinates it
is 
\[
X^{\K^*} \ = \ X \cap Z^{\K^*},
\qquad  
Z^{\K^*} \ = \ V(T_0) \cup \{[1,0,0,0,0]\}.
\]
It follows that $X^{\K^*}$ has two connected components, a prime
divisor and an isolated fixed point. The sets
$X_0\coloneqq  X \setminus X^{\K^*}$ and $Z_0\coloneqq Z\setminus Z^{\K^*}$
are unions of non-trivial $\K^*$-orbits and we obtain
a commutative diagram involving geometric quotients
\[
\xymatrix{
&
X_0
\ar@{}[r]|\subseteq 
\ar[d]_{/\K^*}^{\pi}
&
Z_0
\ar[d]_{/\K^*}^{\pi \colon [z_0,\ldots,z_4] \mapsto [z_1^2, \ldots, z_4^2]}
&
\\
\Pp_2
\ar@{}[r]|{\cong \quad}
&
X_0/\K^*
\ar@{}[r]|\subseteq  
&
Z_0/\K^*
\ar@{}[r]|{= \quad}
&
\Pp(1,1,1,2)
}
\]
where $X/\K^*$ sits in $Z/\K^*$ as  $V(T_1^2+T_2^2+T_3^2+T_4)$ 
in $\Pp(1,1,1,2)$ and the identification of the projective
plane $\Pp_2$ with the quotient $X/\K^*$ goes via the isomorphism
\[
\imath \colon \Pp_2 \ \to \ X/\K^*, \quad
[u_0,u_1,u_2] \ \mapsto \ [u_0,u_1,u_2,-u_0^2-u_1^2-u_2^2].
\]  
%%
%% $$\pi((u_0,u_1,u_2,u_3))=(u_1,u_2,u_3).$$
%%
The critical curves on $\Pp_2 = X/\K^*$ are $C_j \coloneqq \imath^{-1}(V(T_j))$,
where $j = 1,\ldots,4$.
Thus, $X$ is of Type A with $n=3$ and $d_3=2$. Moreover, the general isotropy group $\K^*_x$
over each $C_i$ is cyclic of order $2$.
\end{example}

    Recall that, for a normal variety $X$ with finitely generated class group $\Cl(X)$, the Cox ring is defined as follows:
    $$
    \Rr(X)\coloneqq\bigoplus_{[D]\in\Cl(X)}\Gamma(X,\Oo(D)).
    $$
    When $X$ is $\Q$-factorial and has Picard number one the Cox ring, together with its grading, determines uniquely the variety $X$ (see \cite{MR3307753} for more details). 
    
    \begin{example}\label{running example 2}
    We continue Example \ref{running example 1}.    With $w_i\coloneqq \deg(z_i)$, we have
    $$
    \Rr(X)=\dfrac{\K[z_0,\dots,z_4]}{\langle z_1^4+z_2^4+z_3^4+z_4^2\rangle},\quad Q:=[w_0,\dots,w_4]=\begin{bmatrix}
            1 & 1 & 1 & 1 & 2 \\
            \bar{0} & \bar{1} & \bar{0} & \bar{0} & \bar{1} \\
            \bar{0} & \bar{0} & \bar{1} & \bar{0} & \bar{1} \\
            \bar{0} & \bar{0} & \bar{0} & \bar{1} & \bar{1}
        \end{bmatrix}.
    $$
    By \cite[Theorem 4.4.2.2]{MR3307753} and \cite[Lemma 2.5]{MR3667033}, if $X$ admits an effective $(\K^*)^2$-action there exists a trinomial $f\in\K[z_0,\dots,z_4]$ such that $\Rr(X)$ is equivariantly isomorphic with the graded algebra
    $$
    R\coloneqq\dfrac{\K[z_0,\dots,z_4]}{\langle f(z_0,\dots,z_4)\rangle}.
    $$
    Since $\Rr(X)_{w_i}=\langle z_i\rangle$ for $i=0,\dots,4$, this is impossible. It follows that $X$ does not admit an effective $(\K^*)^2$-action.
    \end{example}

The article is organized as follows. In Section \ref{construction section}, we show how to construct all the $\Q$-factorial, projective threefolds of Picard number one with maximal orbit quotient $\Pp_2$. In Section \ref{classification section}, we determine combinatorial criterions for the Fano and Gorenstein conditions. In Section \ref{proof of theorem}, we prove Theorem \ref{big theorem}. In Section \ref{lists section}, we describe each family in terms of Cox ring data, and provide anticanonical degree and Hilbert function of its varieties.

\section{Projective threefolds with a $\K^*$-action} \label{construction section}
We provide a method to explicitely construct all $\Q$-factorial, projective threefolds of Picard number one with maximal orbit quotient $\Pp_2$, see Construction \ref{p2cons T1} and Proposition \ref{we get all}. We assume basic knowledge of toric geometry, as presented in \cite{toricbook}. We also recall the notion of fake weighted projective space:

\begin{reminder} \label{fwps rem}
A \emph{fake weighted projective space} is a $\Q$-factorial, projective toric variety $Z$ of Picard number one. Every fake weighted projective space arises from a \emph{generator matrix}, that means an $n\times(n+1)$ matrix
$$
P:=[v_0,\dots,v_n],
$$
such that $v_0,\dots,v_n$ are pairwise distinct primitive vectors generating $\Q^n$ as a convex cone. Concretely, there is a unique fan $\Sigma(P)$ having the columns of $P$ as its primitive ray generators, and the associated toric variety $Z(P)$ is a fake weighted projective space.
\end{reminder}

\begin{construction}\label{p2cons T0}
The input data are a degree vector and a family of pairwise distinct, irreducible, homogeneous polynomials:
$$d=(d_3,\dots,d_n)\in\Z_{>0}^{n-2},\hspace{4pt} g=(g_3,\dots,g_n), \hspace{4pt} g_i\in\K[T_0,T_1,T_2],\hspace{4pt} \deg{g_i}=d_i.$$ With this data we associate an embedding of the projective plane
$$
\Pp_2\cong Y(d,g)=V(T_3+g_3,\dots,T_n+g_n)\subset\Pp(1,1,1,d),
$$
Moreover, for the ambient weighted projective space $\Pp(1,1,1,d)$ we provide the generator matrix
$$B = [u_0,\dots,u_n]=
\begin{bmatrix}
-1 & 1 & 0 & 0 & \dots & 0\\
-1 & 0 & 1 & 0 & \dots & 0\\
-d_3 & 0 & 0 & 1 & \dots & 0\\
\vdots & \vdots & \vdots & \vdots & \ddots & \vdots\\
-d_n & 0 & 0 & 0 &\dots & 1\\
\end{bmatrix}.$$
\end{construction}

\begin{example}\label{ex0}
    Let $d=(d_3)=(2)$, $g=(g_3)=(T_0^2+T_1^2+T_2^2)$. Then the associated embedding and generator matrix are
    $$
    \Pp_2\cong V(T_0^2+T_1^2+T_2^2+T_3)\subset\Pp(1,1,1,2), \hspace{20pt} B=
    \begin{bmatrix}
        -1 & 1 & 0 & 0 \\
        -1 & 0 & 1 & 0 \\
        -2 & 0 & 0 & 1\\
    \end{bmatrix}.
    $$
\end{example}

\begin{remark}
        The embedding $Y(d,g)\subset \Pp(1,1,1,d)$ from Construction \ref{p2cons T0} realizes $\Pp_2$ an \emph{explicit variety} in the sense of \cite[Construction 2.8]{MR4031105}. 
\end{remark}

\begin{proposition}\label{p2 explicit}
    Every realization of $\Pp_2$ as an explicit variety arises from Construction \ref{p2cons T0}, up to a homogeneous change of coordinate.
\end{proposition}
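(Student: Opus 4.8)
The plan is to unwind what an explicit realization of $\Pp_2$ consists of and then to read off, from that data, the degree vector $d$, the polynomial tuple $g$ and the generator matrix $B$ of Construction \ref{p2cons T0}. By \cite[Construction 2.8]{MR4031105}, such a realization is an embedding $\imath\colon\Pp_2\hookrightarrow Z$ into a projective toric variety $Z$ of Picard number one (a fake weighted projective space), where $\imath(\Pp_2)$ meets the acting torus, lies in no toric prime divisor, the $n+1$ toric prime divisors $D_0^Z,\dots,D_n^Z$ restrict to the critical curves, $\imath^*(D_i^Z)=C_i$, with $C_0,C_1,C_2$ lines in general position and $C_i$ irreducible of degree $d_i$ for $i\ge 3$; moreover, in the Cox ring $\Rr(Z)=\K[z_0,\dots,z_n]$ the subvariety $\imath(\Pp_2)$ is cut out by $n-2$ homogeneous relations, each of them involving one of the variables $z_3,\dots,z_n$ linearly. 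The task is to show that all of this is, after a homogeneous change of the coordinates $z_i$, the data of Construction \ref{p2cons T0}.

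First I would pin down the ambient toric variety $Z$. Pulling back along $\imath$ turns the canonical section $z_i\in\Gamma(Z,\Oo(D_i^Z))$ into a global section on $\Pp_2$ whose divisor is $C_i$; since $\Cl(\Pp_2)=\Z$ is torsion free with $\Gamma(\Pp_2,\Oo(d))$ the space of degree-$d$ forms, and $C_0,C_1,C_2$ are lines while $C_i$ has degree $d_i$ for $i\ge 3$, the induced map $\imath^*\colon\Cl(Z)\to\Cl(\Pp_2)=\Z$ sends $[D_i^Z]$ to $d_i$ (with $d_0=d_1=d_2=1$), and $\imath^*z_0,\imath^*z_1,\imath^*z_2$ are linearly independent linear forms. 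As the $[D_i^Z]$ generate $\Cl(Z)$, the group $\Cl(Z)$ has rank one; and the three independent sections $z_0,z_1,z_2$ sitting over $[D_0^Z],[D_1^Z],[D_2^Z]$, together with the $\Cl(Z)$-homogeneity of the relations defining $\imath(\Pp_2)$, force $\Cl(Z)$ to be torsion free, i.e.\ $\Cl(Z)\cong\Z$. Hence $Z\cong\Pp(1,1,1,d_3,\dots,d_n)$, and after a change of lattice basis its generator matrix equals the matrix $B$ of Construction \ref{p2cons T0}: normalizing $n$ of the primitive ray generators to $e_1,\dots,e_n$, the last one is then forced by the weight vector $(1,1,1,d_3,\dots,d_n)$ to be $(-1,-1,-d_3,\dots,-d_n)$.

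It then remains to normalize the embedding itself. Write $C_i=V(\ell_i)$ with $\ell_i$ a linear form for $i\le 2$, and $C_i=V(g_i)$ with $g_i$ an irreducible form of degree $d_i$ for $i\ge 3$; by the degree bookkeeping above, $\imath^*z_i=c_i\ell_i$ respectively $\imath^*z_i=c_ig_i$ for suitable $c_i\in\K^*$. Since $\ell_0,\ell_1,\ell_2$ are a basis of $\Gamma(\Pp_2,\Oo(1))$, applying a graded automorphism of $\Rr(Z)=\K[z_0,\dots,z_n]$ — acting by $\mathrm{GL}_3$ on $z_0,z_1,z_2$ and rescaling $z_3,\dots,z_n$, which is exactly a homogeneous change of coordinate — I may assume that $\imath^*z_i$ is the standard coordinate $T_i$ on $\Pp_2$ for $i\le 2$ and $\imath^*z_i=-g_i(T_0,T_1,T_2)$ for $i\ge 3$; here one uses that each of the $n-2$ defining relations is homogeneous and involves its $z_i$ linearly, so it can be brought into the shape $z_i+g_i(z_0,z_1,z_2)$. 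Consequently $\imath(\Pp_2)\subseteq V(z_3+g_3,\dots,z_n+g_n)=Y(d,g)$ inside $\Pp(1,1,1,d)$, and since both sides are irreducible surfaces — the right-hand one being mapped isomorphically onto $\Pp_2$ by the projection $[z_0,\dots,z_n]\mapsto[z_0,z_1,z_2]$ — they coincide. Thus the realization is the one produced by Construction \ref{p2cons T0} for this $d$ and $g$, up to the homogeneous change of coordinate performed. I expect the delicate step to be the middle one: ruling out torsion in $\Cl(Z)$, equivalently showing that the ambient space is an honest and not merely a fake weighted projective space, where the general-position hypothesis on $C_0,C_1,C_2$ — making the three corresponding class-group generators carry independent linear forms and span a saturated rank-one sublattice surjecting onto $\Cl(\Pp_2)$ — is the decisive input.
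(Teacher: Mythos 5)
Your final normalization step (three linear pullbacks, a $\mathrm{GL}_3$-change of coordinates, graph structure, hence $Y(d,g)$) follows the same line as the paper, but the argument as written has two genuine gaps, both traceable to not using what a realization as an explicit variety actually gives you. The first is exactly the step you yourself flag as delicate: torsion-freeness of $\Cl(Z)$. Your sentence that the three independent sections over $[D_0^Z],[D_1^Z],[D_2^Z]$ together with homogeneity of the relations ``force $\Cl(Z)$ to be torsion free'' is an assertion, not a proof; knowing that $\imath^*$ maps $\Cl(Z)$ onto $\Cl(\Pp_2)=\Z$ only shows that any torsion dies under pullback, and general position of three lines does not by itself exclude a finite summand in $\Cl(Z)$. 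In the paper this point is not delicate at all, because it is part of the explicit-variety package: by \cite[Remark 2.11]{MR4031105} and \cite[Theorem 3.2.1.4]{MR3307753} one has the characteristic-space diagram identifying $\Cl(Z)$ with $\Cl(\Pp_2)$ and presenting $\Rr(\Pp_2)$ as $\Rr(Z)/I(\bar{\iota}(\K^3))$; hence $\Cl(Z)=\Z$ immediately, $Z$ may be taken to be an honest weighted projective space $\Pp(d_0,\dots,d_n)$, and there is nothing left to rule out.

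The second gap is circularity in your description of the input: you build into your reading of \cite[Construction 2.8]{MR4031105} that $\imath(\Pp_2)$ is cut out by $n-2$ homogeneous relations each involving one of $z_3,\dots,z_n$ linearly (and that $C_0,C_1,C_2$ are lines in general position). Neither is part of the definition of an explicit realization; the linear occurrence of each $z_i$, $i\geq 3$, is essentially the graph structure of Construction \ref{p2cons T0} that the proposition asks you to establish, so invoking it makes the last paragraph beg the question. The correct derivation is the one the paper uses: since $\bar{\iota}\colon\K^3\to\K^{n+1}$ is a closed embedding of total coordinate spaces, the pullbacks $f_i=\bar{\iota}^*(T_i)$ generate $\Rr(\Pp_2)=\K[T_0,T_1,T_2]$, so three of them must be linearly independent linear forms; after a homogeneous change of coordinates with $f_i=T_i$ for $i=0,1,2$, the image $\bar{\iota}(\K^3)$ is the graph of $(f_3,\dots,f_n)$, whence $I(\bar{\iota}(\K^3))=\langle T_3-f_3,\dots,T_n-f_n\rangle$ and $\iota(\Pp_2)=Y(d,g)$ with $g=(-f_3,\dots,-f_n)$. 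If you replace your two assumed inputs by these two facts (class-group identification and generation of the Cox ring by the pulled-back coordinates), your argument closes up and coincides with the paper's proof.
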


\begin{proof}
    Given a realization of $\Pp_2$ as an explicit variety, \cite[Remark 2.11]{MR4031105} and \cite[Theorem 3.2.1.4]{MR3307753} provide us with a commutative diagram
     \[\xymatrix@=35pt{
    \K^3\ar@{^{(}->}[r]_{\bar{\iota}} & \K^{n+1} \\
    \K^3\setminus\{0\} \ar@{^{(}->}[u] \ar[d]^p_{\sslash H} \ar@{^{(}->}[r] & \hat{Z} \ar@{^{(}->}[u] \ar[d]^{p_Z}_{\sslash H}\\
    \Pp_2 \ar@{^{(}->}[r]_{\iota} & Z
    }\]
    where $H$ is the characteristic quasitorus and $p$, $p_z$ are characteristic spaces. In particular,
    $$\Cl(Z)=\Cl(\Pp_2)=\Z.$$ As $\Pp_2$ is complete, we may assume $Z$ to be complete as well. Thus, we may assume $Z=\Pp(d_0,\dots,d_n)$ to be a weighted projective space. Consequently, $\hat{Z}=\K^{n+1}\setminus\{0\}$ and $H=\K^*$. Furthermore, we have
    $$
    \Rr(\Pp_2)=\Rr(\Pp(d_0,\dots,d_n))/I(\bar{\iota}(\K^3)),
    $$
    and the $f_i\coloneqq \bar{\iota}^*(T_i)$ for $i=0,\dots,n$ are pairwise distinct, homogeneous, irreducible polynomials of degree $d_0,\dots,d_n$ respectively, generating $\Rr(\Pp_2)$. In particular, three of the generators must be of degree one and linearly independent. Hence, up to a homogeneous change of coordinate, we can assume $f_i=T_i$ for $i=0,1,2$. It follows that $\bar{\iota}(\K^3)\subset \K^{n+1}$ is a graph, and we have
    $$I(\bar{\iota}(\K^3))=\langle T_3-f_3,\dots,T_n-f_n\rangle.$$
    We conclude that $\iota(\Pp_2)=Y(d,g)\subset\Pp(1,1,1,d)$, where $Y(d,g)$ arises from Construction \ref{p2cons T0} with $d=(d_0,\dots,d_n)$ and $g=(-f_3,\dots,-f_n)$.
\end{proof}

We build up a new matrix $P$ from $B$, distinguishing three types:

\begin{construction}[Type A]\label{cons A}
    Take the embedded plane $Y(d,g)\subset \Pp(1,1,1,d)$ and its associated degree matrix $B=[u_0,\dots,u_n]$ as in Construction \ref{p2cons T0}.
    Fix $l_{01},\dots,l_{n1}\in\Z_{>0}$ and $d_{01},\dots,d_{n1}\in\Z_{\geq 0}$, such that the vectors $v_{i1}\coloneqq[l_{i1}u_i,d_{i1}]$ are primitive and
    $$ \dfrac{d_{01}}{l_{01}}-\dfrac{d_{11}}{l_{11}}-\dfrac{d_{21}}{l_{21}}-\sum_{j=3}^n d_j\dfrac{d_{j1}}{l_{j1}}>0,
    $$
where the $d_j$'s are the entries of the degree vector $d$.
These data give rise to an $(n+1)\times (n+2)$ generator matrix in the sense of Remark \ref{fwps rem}:
\begin{align*}
P=[v_{01},\dots,v_{n1},v_1]&=
\begin{bmatrix}
l_{01}u_0 & l_{11}u_1 & \dots & l_{n1}u_n & 0 \\
-d_{01} & d_{11} & \dots & d_{n1} & 1
\end{bmatrix}.
\end{align*}

\iffalse
&=\begin{bmatrix}
-l_{01} & l_{11} & 0 & 0 & \dots & 0 & 0\\
-l_{01} & 0 & l_{21} & 0 & \dots & 0 & 0\\
-d_3 l_{01} & 0 & 0 & l_{31} & \dots & 0 & 0\\
\vdots & \vdots & \vdots & \vdots & \ddots & \vdots & 0\\
-d_n l_{01} & 0 & 0 & 0 &\dots & l_{n1} & 0\\
-d_{01} & d_{11} & d_{21} & d_{31} & \dots & d_{n1} & 1 
\end{bmatrix}.
\fi

Moreover, set $K\coloneqq\Z^{n+2}/\im(P^*)$ and
    denote by $Q\colon\Z^{n+2}\rightarrow K$ the projection and by $e_{01},\dots,e_{n1},e_1\in\Z^{n+2}$ the canonical basis vectors.
    Then we define the $K$-graded algebra
    {\small
    \begin{align*}
    & R(g,P)\coloneqq\K[T_{01},\dots,T_{n1},S_1]/\langle h_3,\dots,h_n\rangle,\hspace{10pt} h_i\coloneqq  T_{i1}^{l_{i1}}+g_i(T_{01}^{l_{01}},T_{11}^{l_{11}},T_{21}^{l_{21}}),\\
    & \deg(T_{i1})\coloneqq \omega_{i1}\coloneqq Q_P(e_{i1}),\\
    & \deg(S_{1})\coloneqq \omega_{1}\coloneqq Q_P(e_{1}).
    \end{align*}
    }
\end{construction}

\begin{example} \label{ex1}
    Set $d=(d_3)=(2)$, $g=(g_3)=(T_0^2+T_1^2+T_2^2)$. As in Example \ref{ex0}, we have
    $$
    B=[u_0,u_1,u_2,u_3]=
    \begin{bmatrix}
        -1 & 1 & 0 & 0 \\
        -1 & 0 & 1 & 0 \\
        -2 & 0 & 0 & 1\\
    \end{bmatrix}.
    $$
    Then $(l_{01},\dots,l_{31})=(2,2,2,2)$, $(d_{01},\dots,d_{31})=(5,1,1,1)$ satisfy the conditions of Construction \ref{cons A}, and we have
    $$
    P=
    \begin{bmatrix}
        2u_0 & 2u_1 & 2u_2 & 2u_3 & 0 \\
        -5 & 1 & 1 & 1 & 1
    \end{bmatrix} =
    \begin{bmatrix}
        -2 & 2 & 0 & 0 & 0 \\
        -2 & 0 & 2 & 0 & 0 \\
        -4 & 0 & 0 & 2 & 0 \\
        -5 & 1 & 1 & 1 & 1
    \end{bmatrix}.
    $$
    Moreover, observe that $K\cong \Z\oplus(\Z/2\Z)^3$. The $K$-graded algebra $R(g,P)$ and its degree matrix are given by
    \begin{align*}
    &R(g,P)=\K[T_{01},T_{11},T_{21},T_{31},S_1]/\langle T_{01}^4+T_{11}^4+T_{21}^4+T_{31}^2\rangle,\\
    & [\deg(T_{01}),\dots,\deg(T_{31}),\deg(S_1)]=\begin{bmatrix}
            1 & 1 & 1 & 1 & 2 \\
            \bar{0} & \bar{1} & \bar{0} & \bar{0} & \bar{1} \\
            \bar{0} & \bar{0} & \bar{1} & \bar{0} & \bar{1} \\
            \bar{0} & \bar{0} & \bar{0} & \bar{1} & \bar{1}
        \end{bmatrix}.
    \end{align*}
\end{example}

\begin{construction}[Type B]\label{cons B}
    Take the embedded plane $Y(d,g)\subset \Pp(1,1,1,d)$ and its associated degree matrix $B=[u_0,\dots,u_n]$ as in Construction \ref{p2cons T0}. Fix $l_{01},\dots,l_{n1},l_{02}\in\Z_{>0}$ and $d_{01},\dots,d_{n1},d_{02}\in\Z_{\geq 0}$, such that the vectors $v_{i1}\coloneqq[l_{i1}u_i,d_{i1}]$ are primitive and
    $$\dfrac{d_{02}}{l_{02}}>\dfrac{d_{11}}{l_{11}}+\dfrac{d_{21}}{l_{21}}+d_3\dfrac{d_{31}}{l_{31}}+\dots+d_n\dfrac{d_{n1}}{l_{n1}}>\dfrac{d_{01}}{l_{01}},$$
    where the $d_j$'s are the entries of the degree vector $d$. These data give rise to an $(n+1)\times (n+2)$ generator matrix in the sense of Remark \ref{fwps rem}:
$$
P=[v_{01},\dots,v_{n1}]=
\begin{bmatrix}
l_{01}u_0 & l_{02}u_0 & l_{11}u_1 & \dots & l_{n1}u_n & 0 \\
-d_{01} & -d_{02} & d_{11} & \dots & d_{n1} & 1
\end{bmatrix}.
\iffalse
\begin{bmatrix}
-l_{01} & -l_{02} & l_{11} & 0 & 0 & \dots & 0\\
-l_{01} & -l_{02} & 0 & l_{21} & 0 & \dots & 0\\
-d_3 l_{01} & -d_3 l_{02} & 0 & 0 & l_{31} & \dots & 0\\
\vdots & \vdots & \vdots & \vdots &\vdots & \ddots & 0\\
-d_n l_{01} & -d_n l_{02} & 0 & 0 & 0 &\dots & l_{n1}\\
-d_{01} & -d_{02} & d_{11} & d_{21} & d_{31} & \dots & d_{n1}
\end{bmatrix},
\fi
$$
Moreover, set $K\coloneqq\Z^{n+2}/\im(P^*)$ and
    denote by $Q\colon\Z^{n+2}\rightarrow K$ the projection and by $e_{01},e_{02},e_{11}\dots,e_{n1}\in\Z^{n+2}$ the canonical basis vectors.
    Then we define the $K$-graded algebra
     {\small
    \begin{align*}
    & R(g,P)\coloneqq\K[T_{01},\dots,T_{n1}]/\langle h_3,\dots,h_n\rangle,\hspace{10pt} h_i\coloneqq  T_{i1}^{l_{i1}}+g_i(T_{01}^{l_{01}}T_{02}^{l_{02}},T_{11}^{l_{11}},T_{21}^{l_{21}}),\\
    & \deg(T_{ij})\coloneqq \omega_{ij}\coloneqq Q_P(e_{ij}).
    \end{align*}
    }
\end{construction}

\begin{example}
    Set $d=(d_3)=(2)$, $g=(g_3)=(T_0^2+T_1^2+T_2^2)$. As in Example \ref{ex0}, we have
    $$
    B=[u_0,u_1,u_2,u_3]=
    \begin{bmatrix}
        -1 & 1 & 0 & 0 \\
        -1 & 0 & 1 & 0 \\
        -2 & 0 & 0 & 1\\
    \end{bmatrix}.
    $$
    Then $(l_{01},l_{02},l_{11},l_{21},l_{31})=(1,2,1,4,2)$, $(d_{01},d_{02},d_{11},d_{21},d_{31})=(1,3,0,1,1)$ satisfy the conditions of Construction \ref{cons B}, and we have
    $$
    P=
    \begin{bmatrix}
        -u_0 & -2u_0 & u_1 & 4u_2 & 2u_3 \\
        -1 & -3 & 0 & 1 & 1
    \end{bmatrix}=
    \begin{bmatrix}
        -1 & -2 & 1 & 0 & 0 \\
        -1 & -2 & 0 & 4 & 0 \\
        -2 & -4 & 0 & 0 & 2 \\
        -1 & -3 & 0 & 1 & 1 
    \end{bmatrix}.
    $$
    Moreover, observe that $K\cong \Z\oplus\Z/2\Z$. The $K$-graded algebra $R(g,P)$ and its degree matrix are given by
    \begin{align*}&R(g,P)=\K[T_{01},\dots,T_{31},S_1]/\langle T_{01}^2T_{02}^4+T_{11}^2+T_{21}^8+T_{31}^2\rangle,\\
    &[\deg(T_{01}),\dots,\deg(T_{31}),\deg(S_1)]=\begin{bmatrix}
    1 & 4 & 2 & 4 & 1 \\
    \bar{1} & \bar{0} & \bar{1} & \bar{0} & \bar{0}
    \end{bmatrix}.
    \end{align*}
\end{example}

\begin{construction}[Type C, only if $d_3>1$]\label{cons C}
Take the embedded plane $Y(d,g)\subset \Pp(1,1,1,d)$ and its associated degree matrix $B=[u_0,\dots,u_n]$ as in Construction \ref{p2cons T0}. Fix $l_{01},\dots,l_{n1},l_{32}\in\Z_{>0}$ and $d_{01},\dots,d_{n1},d_{32}\in\Z_{\geq 0}$, such that the vectors $v_{i1}\coloneqq[l_{i1}u_i,d_{i1}]$ are primitive and
    $$\dfrac{d_3d_{32}}{l_{32}}>\dfrac{d_{01}}{l_{01}}-\dfrac{d_{11}}{l_{11}}-\dfrac{d_{21}}{l_{21}}-d_4\dfrac{d_{41}}{l_{41}}+\dots+d_n\dfrac{d_{n1}}{l_{n1}}>\dfrac{d_3d_{31}}{l_{31}},$$
    where the $d_j$'s are the entries of the degree vector $d$. These data give rise to an $(n+1)\times (n+2)$ generator matrix in the sense of Remark \ref{fwps rem}:
$$
P=[v_{01},\dots,v_{n1}]=
\begin{bmatrix}
l_{01}u_0 & l_{11}u_1 & l_{21}u_2 & l_{31}u_3 & l_{32}u_3 & \dots & l_{n1}u_n & 0 \\
-d_{01} & d_{11} & d_{21} & d_{31} & d_{32} & \dots & d_{n1} & 1
\end{bmatrix}.
\iffalse
\begin{bmatrix}
-l_{01} & l_{11} & 0 & 0 & 0 & \dots & 0\\
-l_{01} & 0 & l_{21} & 0 & 0 & \dots & 0\\
-d_3 l_{01} & 0 & 0 & l_{31} & l_{32} & \dots & 0\\
\vdots & \vdots & \vdots & \vdots &\vdots & \ddots & \vdots\\
-d_n l_{01} & 0 & 0 & 0 & 0 &\dots & l_{n1}\\
-d_{01} & d_{11} & d_{21} & d_{31} & d_{32} & \dots & d_{n1}
\end{bmatrix},
\fi
$$
with associated fan $\Sigma$ and fake weighted projective space $Z_\Sigma$.
Moreover, set $K\coloneqq\Z^{n+2}/\im(P^*)$ and
    denote by $Q\colon\Z^{n+2}\rightarrow K$ the projection and by $e_{01},\dots,e_{31},$ $e_{32}\dots,e_{n1}\in\Z^{n+2}$ the canonical basis vectors.
    Then we define the $K$-graded algebra
     {\small
    \begin{align*}
    & R(g,P)\coloneqq\K[T_{01},\dots,T_{n1}]/\langle h_3,\dots,h_n\rangle,\hspace{2pt} h_i\coloneqq \begin{cases}
        T_{31}^{l_{31}}T_{32}^{l_{32}}+g_3(T_{01}^{l_{01}},T_{11}^{l_{11}},T_{21}^{l_{21}}) &i=3\\
        T_{i1}^{l_{i1}}+g_i(T_{01}^{l_{01}},T_{11}^{l_{11}},T_{21}^{l_{21}}) &i\geq4
    \end{cases}, \\
    & \deg(T_{ij})\coloneqq \omega_{ij}\coloneqq Q_P(e_{ij}).
    \end{align*}
    }
\end{construction}

\begin{example}
    Set $d=(d_3)=(2)$, $g=(g_3)=(T_0^2+T_1^2+T_2^2)$. As in Example \ref{ex0}, we have
    $$
    B=[u_0,u_1,u_2,u_3]=
    \begin{bmatrix}
        -1 & 1 & 0 & 0 \\
        -1 & 0 & 1 & 0 \\
        -2 & 0 & 0 & 1\\
    \end{bmatrix}.
    $$
    Then  $(l_{01},l_{11},l_{21},l_{31},l_{32})=(4,1,2,1,4)$, $(d_{01},d_{11},d_{21},d_{31},d_{32})=(3,0,1,0,1)$ satisfy the conditions of Construction \ref{cons B}, and we have
    $$
    P=
    \begin{bmatrix}
        -4u_0 & u_1 & 2u_2 & u_3 & 4u_3 \\
        -3 & 0 & 1 & 0 & 1
    \end{bmatrix} =
    \begin{bmatrix}
        -4 & 1 & 0 & 0 & 0 \\
        -4 & 0 & 2 & 0 & 0 \\
        -8 & 0 & 0 & 1 & 4 \\
        -3 & 0 & 1 & 0 & 1 
    \end{bmatrix}.
    $$
     Moreover, observe that $K\cong \Z\oplus\Z/2\Z$. The $K$-graded algebra $R(g,P)$ and its degree matrix are given by
     \begin{align*}&R(g,P)=\K[T_{01},T_{11},T_{21},T_{31},T_{32}]/\langle T_{01}^8+T_{11}^2+T_{21}^4+T_{31}T_{32}^4\rangle,\\
    &Q\coloneqq[\deg(T_{01}),\dots,\deg(T_{31}),\deg(T_{32})]=\begin{bmatrix}
    2 & 1 & 4 & 1 & 4 \\
    \bar{0} & \bar{0} & \bar{0} & \bar{1} & \bar{1}
    \end{bmatrix}.
    \end{align*}
\end{example}

The following is a special case of \cite[Construction 3.5]{MR4031105}, adapted to our needs:

\begin{construction} \label{p2cons T1}
    Let the polynomial family $g$, the associated degree vector $d$ and the integer matrix $P$ be as in Construction \ref{cons A}, \ref{cons B} or \ref{cons C}. We have a commutative diagram
    \[\xymatrix@=35pt{
    X(g,P)\ar@{-->}[d] \ar@{^{(}->}[r] & Z(P)\ar@{-->}[d]\\
    \Pp_2\cong Y(g,P)\ar@{^{(}->}[r] & \Pp(1,1,1,d_3,\dots,d_n)
    }\]
    where the rational map $Z(P)\dashrightarrow \Pp(1,1,1,d_3,\dots,d_n)$ is given by the projection $\T^n\times\K^*\rightarrow \T^n$ of the respective acting tori $\T(P)=\T^n\times \K^*$ and $\T_\Delta=\T^n$, and we define
    $$
    X=X(g,P)\coloneqq \overline{(Y(g,P)\cap\T^n)\times \K^*}\subset Z(P).
    $$
    Then $X\subset Z(P)$ is a projective threefold, invariant under the action of the $1$-dimensional subtorus $\T=\{1_ {\T^n}\}\times\K^*$ of the acting torus $\T(P)=\T^n\times \K^*$ of $Z(P)$.
\end{construction}

\begin{example}
    With the encoding data $g$ and $P$ of Example \ref{ex1}, $X(g,P)$ coincides with the Fano threefold of Example \ref{running example 1}.
\end{example}

From \cite[Proposition 3.7]{MR4031105} we deduce the following:

\begin{proposition} \label{T-conditions prop}
    Let $X\coloneqq X(g,P)$, $R(g,P)$ and $K=\Z^{n+2}/\im(P^*)$ arise from Construction \ref{p2cons T1}.
    The divisor class group and Cox ring of $X$ are
        $$
        \Cl(X)=K,\hspace{10pt}\Rr(X)=R(g,P),
        $$ provided that $R(g,P)$ is $K$-integral and the $T_{ij}$ define pairwise non-associated $K$-primes. 
    In particular, in this case, $X$ is complete intersection, $\Q$-factorial and has Picard number one.
\end{proposition}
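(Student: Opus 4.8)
The statement is the specialization of \cite[Proposition 3.7]{MR4031105} to the data produced by Constructions~\ref{cons A}, \ref{cons B} and~\ref{cons C}; since Construction~\ref{p2cons T1} is a special case of \cite[Construction 3.5]{MR4031105}, the only thing to check is that this data is admissible input there, and then to read off the supplementary properties. Recall that \cite[Construction 3.5]{MR4031105} takes an $(n+1)\times(n+2)$ integer matrix $P$ whose columns are pairwise distinct primitive vectors generating $\Q^{n+1}$ as a convex cone, together with polynomials $h_i$ obtained from homogeneous polynomials on $\Pp(1,1,1,d)$ by substituting powers of the coordinate variables, and required to be homogeneous for the projection $\Z^{n+2}\to K=\Z^{n+2}/\im(P^*)$; it outputs $X(g,P)\subseteq Z(P)$ as in Construction~\ref{p2cons T1}. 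Granting admissibility, \cite[Proposition 3.7]{MR4031105} yields $\Cl(X)=K$ and $\Rr(X)=R(g,P)$ precisely under the two provisos that $R(g,P)$ be $K$-integral and that the $T_{ij}$ be pairwise non-associated $K$-primes.

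We verify admissibility. By construction the columns $v_{ij}$ of $P$ are pairwise distinct and primitive. They generate $\Q^{n+1}$ as a convex cone: the columns $u_0,\dots,u_n$ of the matrix $B$ from Construction~\ref{p2cons T0} generate $\Q^n$ as a cone (Reminder~\ref{fwps rem}), hence so do the rescalings $l_{i1}u_i$, which are the top blocks of the $v_{i1}$; the unique (up to scalar) linear relation among the $n+2$ columns of $P$ has as coefficients the weights obtained by clearing denominators in the quantity occurring in the respective strict inequality, and that inequality says exactly that all these coefficients are positive, which is equivalent to $0$ lying in the interior of $\conv(v_{ij})$ and hence to $\Sigma(P)$ being the fan of a fake weighted projective space in the sense of Reminder~\ref{fwps rem}. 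Next, each $h_i$ is homogeneous for $\Z^{n+2}\to K$: by Proposition~\ref{p2 explicit} the $g_i$ are homogeneous of degree $d_i$ in the weights $(1,1,1,d)$ of $\Pp(1,1,1,d)$, and the rows of $P^*$ cutting out $K$ are assembled exactly from the columns $u_k$ rescaled by the exponents $l_{k1}$ (resp. $l_{01},l_{02}$ in Type~B and $l_{31},l_{32}$ in Type~C), so the substitution turning $T_i+g_i$ into $h_i$ produces a $K$-homogeneous element. Thus the data is admissible and \cite[Proposition 3.7]{MR4031105} applies, giving $\Cl(X)=K$ and $\Rr(X)=R(g,P)$.

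For the supplement, $R(g,P)$ is a polynomial ring in $n+2$ indeterminates modulo the $n-2$ relations $h_3,\dots,h_n$, so it has Krull dimension at least $4$; since it is $K$-integral and equals $\Rr(X)$, whose dimension is $\dim X+\rank K=3+1=4$, its defining ideal is prime of height $n-2$, hence, by Cohen--Macaulayness of the polynomial ring, generated by the regular sequence $h_3,\dots,h_n$, so $X$ is a complete intersection. Finally $K=\Z^{n+2}/\im(P^*)$ has free rank $(n+2)-(n+1)=1$ because $P$ has rank $n+1$; so $\Cl(X)$ has rank one, and since $X$ is projective with finitely generated Cox ring this forces $X$ to be $\Q$-factorial of Picard number one, by the Cox ring formalism of \cite{MR3307753}.

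The one substantial point is the admissibility check in the second paragraph: one has to match the strict inequality conditions of Constructions~\ref{cons A}--\ref{cons C} with the exact hypotheses of \cite{MR4031105} ensuring both that $\Sigma(P)$ is the fan of a complete fake weighted projective space and that the toric projection $Z(P)\dashrightarrow\Pp(1,1,1,d)$ restricts on $X$ to the maximal orbit quotient over $\Pp_2$. Once this dictionary is in place, the rest is the cited results together with a routine height-and-rank computation.
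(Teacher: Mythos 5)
Your proposal is correct and follows essentially the same route as the paper, which proves this proposition simply by deducing it from \cite[Proposition 3.7]{MR4031105} applied to the data of Constructions \ref{cons A}--\ref{cons C}; your additional verifications (that the strict inequalities make the fake weights positive so that $P$ is a generator matrix, the $K$-homogeneity of the $h_i$, and the rank/height computations giving the complete intersection, $\Q$-factoriality and Picard number one statements) are exactly the routine checks the paper leaves implicit.
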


\begin{remark}
    A variety $X=X(g,P)$ arising from Construction \ref{p2cons T1} such that $R(g,P)$ is $K$-integral and the $T_{ij}$ define pairwise non-associated $K$-primes is an example of an explicit $\T$-variety in the sense of \cite[Definition 3.8]{MR4031105}.
\end{remark}

We recall the notion of \emph{maximal orbit quotient}, as presented in \cite [Definition 3.12]{MR4031105}:

\begin{definition}\label{moq def}
    Let $X$ be an algebraic variety with an effective action of an algebraic torus $\T\times X\rightarrow X$, and $X_0\subset X$ the open subset consisting of all points $x\in X$ with finite isotropy group. A \emph{maximal orbit quotient} over a variety $Y$ for the $\T-$action on $X$ is a rational quotient $\pi \colon X \dashrightarrow Y$ admitting a representative $\psi\colon W\rightarrow V$ and prime divisors $C_0,\dots,C_r$ on $Y$ such that the following properties are satisfied:
    \begin{itemize}
        \item one has $W\subset X_0$ and the complements $X_0\setminus W\subset X_0$ and $Y\setminus V\subset Y$ are both of codimension two,
        \item for every $i=0,\dots,r$ the inverse image $\psi^{-1}(C_i)\subset W$ is a union of prime divisors $D_{i1},\dots,D_{in_i}\subset W$,
        \item all $\T-$invariant prime divisors of $X_0$ with nontrivial generic isotropy occur among the $D_{ij}$,
        \item every sequence $J=(j_0,\dots,j_r)$ with $1\leq j_i\leq n_i$ defines a geometric quotient $\psi\colon W_J\rightarrow V$ for the $\T-$action, where $W_J\coloneqq W\setminus \bigcup_{j\neq j_i}D_{ij}$.
    \end{itemize}
\end{definition}

\begin{example}
    Consider the $\K^*$-threefold $X$, the open subset $X_0\subset X$, the map $\pi\colon X_0\rightarrow \Pp_2= X/\K^*$ and the prime divisors $C_1,\dots,C_4\subset\Pp_2$ as presented in Example \ref{running example 1}. These data satisfy the properties of Definition \ref{moq def}, hence determining a maximal orbit quotient over $\Pp_2$ for the $\K^*$-action on $X$.
\end{example}

More generally, from \cite[Proposition 3.16]{MR4031105} we deduce the following:
\begin{proposition} \label{T-conditions prop 2}
    Let $X\coloneqq X(g,P)$ be an explicit $\T$-variety arising from Construction \ref{p2cons T1}. The downward rational maps in the commutative diagram
     \[\xymatrix@=35pt{
    X(g,P)\ar@{-->}[d] \ar@{^{(}->}[r] & Z(P)\ar@{-->}[d]\\
    \Pp_2\cong Y(g,P)\ar@{^{(}->}[r] & \Pp(1,1,1,d_3,\dots,d_n)
    }\]
    are maximal orbit quotients.
\end{proposition}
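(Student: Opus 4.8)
The plan is to verify the four bullet points of Definition~\ref{moq def} for the rational map $\pi\colon X(g,P)\dashrightarrow \Pp_2$ (and, in parallel, for $Z(P)\dashrightarrow\Pp(1,1,1,d_3,\dots,d_n)$), by reducing everything to the toric statement and then restricting to $X$. First I would recall, following \cite[Proposition 3.16]{MR4031105}, that for a fake weighted projective space $Z(P)$ the projection of tori $\T^n\times\K^*\to\T^n$ is already a maximal orbit quotient onto $\Pp(1,1,1,d_3,\dots,d_n)$: this is a purely combinatorial fact about the fan $\Sigma(P)$, where the acting subtorus $\T=\{1\}\times\K^*$ corresponds to the last coordinate line, the quotient fan is obtained by projecting $\Sigma(P)$ along that line, and the distinguished prime divisors on the base are the images $C_i$ of the rays $v_{i1}$ (resp.\ the pair $v_{01},v_{02}$ in Types~B,~C). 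The representative $\psi\colon W\to V$ is the restriction of the toric quotient to the union of big torus orbits of the relevant cones, and the three combinatorial conditions on $\psi$ (codimension-two complements, $\psi^{-1}(C_i)$ a union of the $D_{ij}$, geometric-quotient property for each index sequence $J$) are exactly what \cite[Proposition 3.16]{MR4031105} certifies for $P$.

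The substantive step is transferring this from $Z(P)$ to the embedded variety $X=X(g,P)$. Here I would invoke that $X$ is an \emph{explicit} $\T$-variety in the sense of \cite[Definition 3.8]{MR4031105}: by Proposition~\ref{T-conditions prop}, under the stated $K$-integrality and $K$-primality hypotheses we have $\Rr(X)=R(g,P)$ and $\Cl(X)=K$, and the defining relations $h_3,\dots,h_n$ are precisely the pullbacks of the relations $T_i+g_i$ cutting out $Y(g,P)=\Pp_2$ inside $\Pp(1,1,1,d)$. Consequently the Cox-ring inclusion $X\hookrightarrow Z(P)$ is compatible with the torus quotients on both sides, so the maximal-orbit-quotient data for $Z(P)\dashrightarrow\Pp(1,1,1,d)$ restrict to corresponding data for $X\dashrightarrow\Pp_2$: one takes $W_X\coloneqq W\cap X_0$ and $V_X\coloneqq V\cap Y(g,P)$, and the prime divisors on $\Pp_2$ are the $C_i=\Pp_2\cap\{T_i=0\}$ (resp.\ the images of the critical curves, as in Example~\ref{running example 1}). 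The divisors $D_{ij}\subset X$ are $V_X(T_{ij})$, with the decomposition of $\pi^*(C_i)$ read off from the exponents $l_{ij}$ in the $h_i$ exactly as in the Type A/B/C descriptions; the claim that all $\T$-invariant prime divisors of $X_0$ with nontrivial generic isotropy occur among them follows because isotropy along $V_X(T_{ij})$ is governed by the primitivity defect of $v_{ij}$ relative to the projected lattice, which is the same computation as on $Z(P)$.

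The main obstacle I anticipate is the last bullet of Definition~\ref{moq def}: checking that for every index sequence $J=(j_0,\dots,j_n)$ the restriction $\psi\colon W_{X,J}\to V_X$ is a \emph{geometric} quotient, not merely a good one. On the toric side this is the assertion that each maximal cone obtained by selecting one ray $v_{ij_i}$ per critical curve, together with $v_1$, is simplicial and projects bijectively (after the quotient) onto a cone of the fan of $\Pp(1,1,1,d)$ — equivalent to the inequality conditions imposed in Constructions~\ref{cons A}, \ref{cons B}, \ref{cons C} (these are exactly the conditions making $P$ a well-formed generator matrix with the desired combinatorics). Restricting to $X$ then requires that $W_{X,J}$ still surjects onto $V_X$, i.e.\ that the hypersurface relations $h_i$ do not cut out the whole fibre over some point — but this is guaranteed by the $K$-integrality and $K$-primality of $R(g,P)$, since those force each $h_i$ to remain irreducible and nondegenerate after passing to any torus orbit in $W_{X,J}$. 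I would close by remarking that the commutativity of the displayed diagram is immediate from the definition of $X(g,P)$ as the closure of $(Y(g,P)\cap\T^n)\times\K^*$, so the two downward maps are genuinely compatible maximal orbit quotients.
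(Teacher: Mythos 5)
Your route differs from the paper's, and the difference matters. The paper offers no independent argument at all: Construction \ref{p2cons T1} is introduced as a special case of \cite[Construction 3.5]{MR4031105}, so the statement is obtained by applying \cite[Proposition 3.16]{MR4031105} \emph{directly to the explicit $\T$-variety $X(g,P)$ itself}; no passage through the ambient toric variety and no restriction argument is needed. You instead read the cited result as a purely toric statement about $Z(P)\dashrightarrow\Pp(1,1,1,d_3,\dots,d_n)$ and then try to transfer the maximal-orbit-quotient data from $Z(P)$ to the embedded $X$ by hand, and that transfer is exactly where your sketch has genuine gaps.

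Concretely, maximal-orbit-quotient data do not simply ``restrict'' to a subvariety. The codimension-two conditions of Definition \ref{moq def} must be re-verified on $X$: even if $Z(P)_0\setminus W$ has codimension two in $Z(P)_0$, the intersection $W\cap X_0$ could a priori omit a divisor of $X_0$, so one must control which toric orbits actually meet $X$ (compare Propositions \ref{X-cones} and \ref{maximal cones}, which require an argument). The decomposition of $\psi^{-1}(C_i)$ into prime divisors $D_{ij}$ rests on the $T_{ij}$ defining pairwise non-associated $K$-primes (the hypothesis of Proposition \ref{T-conditions prop}), which you invoke only in passing. Most seriously, the geometric-quotient property of $W_{X,J}\to V_X$ for \emph{every} index sequence $J$ is asserted rather than proved: your justification, that $K$-integrality and $K$-primality ``force each $h_i$ to remain irreducible and nondegenerate after passing to any torus orbit,'' does not show that the fibres of $\psi$ over $V_X$ are single $\T$-orbits, which is what a geometric quotient requires; irreducibility of the relations is neither necessary nor sufficient for that. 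These are precisely the points that \cite[Proposition 3.16]{MR4031105} settles for $X$ directly, which is why the paper can state the proposition as an immediate consequence. As written, your proposal proves (at best) a toric statement about $Z(P)$ and leaves the essential step for $X$ unestablished.
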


\begin{remark}
    The maximal orbit quotients of Proposition \ref{T-conditions prop 2} are surjective, and they coincide with the Chow quotients of the respective $\T$-varieties. Indeed, let the fan $\Sigma(P)$, the toric variety $Z(P)$ and the $\T$-variety $X=X(g,P)$ arise from Construction \ref{p2cons T1}. Assume first that $X$ is of type A, that is $P$ arises from Construction \ref{cons A}.
    Then
    $$Z(P)_0\coloneqq Z(P)\setminus Z(P)^{\C^*}=\bigsqcup_{\sigma\in\Sigma_0} \mathcal{O}(\sigma),$$
    where $\Sigma_0\coloneqq\Sigma(P)\setminus (\{ \sigma \in \Sigma(P) \mid \{v_1\} \prec \sigma \}\cup \{v_{01},\dots,v_{n1}\})$.
    The maximal orbit quotient for the $\K^*$-action on $Z(P)$ admits a geometric representative $Z(P)_0\rightarrow \Pp(1,1,1,d_3,\dots,d_n)$, sending $\mathcal{O}(\{v_{i_11},\dots,v_{i_k1}\})$ to $\mathcal{O}(\{v_{i_1},\dots,v_{i_k}\})$.
    We then have the commutative diagram
    \[\xymatrix@=35pt{
    X_0\coloneqq X\cap Z(P)_0\ar[d] \ar@{^{(}->}[r] & Z(P)_0\ar[d]\\
    \Pp_2\cong Y(g,P)\ar@{^{(}->}[r] & \Pp(1,1,1,d_3,\dots,d_n),
    }\]
    where the map $X_0\rightarrow Y(g,P)$ is a geometric representative for the maximal orbit quotient of $X$.
    Now assume that $X$ is of type B, that is $P$ arises from Construction \ref{cons B}.
    Let
    $$    Z_1\coloneqq\bigsqcup_{\sigma\in\Sigma_1} \mathcal{O}(\sigma),$$
    where $\Sigma_1\coloneqq\Sigma(P)\setminus (\{ \sigma \in \Sigma(P) \mid \{v_{02}\} \prec \sigma \}\cup \{v_{01},\dots,v_{n1}\})$. Define $Z_2$ analogously.
    Then, as above, for $i=1,2$ we get commutative diagrams
    \[\xymatrix@=35pt{
    W_i\coloneqq X\cap Z_i\ar[d] \ar@{^{(}->}[r] & Z_i\ar[d]\\
    \Pp_2\cong Y(g,P)\ar@{^{(}->}[r] & \Pp(1,1,1,d_3,\dots,d_n),
    }\]
    where the downward maps are geometric representatives for the maximal orbit quotients of the $\T$-varieties $X$ and $Z(P)$ respectively.
    Type C follows analogously.
\end{remark}

\begin{lem} \label{unique moq}
    Let $X$, $X'$ be $\T$-varieties, $\phi:X\rightarrow X'$ an equivariant isomorphism. Assume that $X$ and $X'$ admit maximal orbit quotients $\pi:X\dashrightarrow Y$, $\pi':X'\dashrightarrow Y'$ respectively. Then we have a commutative diagram
    \[\xymatrix@=35pt{
    X\ar@{-->}[d]_{\pi} \ar[r]^\phi & X'\ar@{-->}[d]^{\pi'}\\
    Y\ar@{-->}[r]^\psi & Y'
    }\]
    where $\psi\colon Y\dashrightarrow Y'$ is an isomorphism in codimension two.
\end{lem}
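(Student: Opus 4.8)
The plan is to obtain the birational map $\psi$ from the fact that a maximal orbit quotient is in particular a rational quotient, and then to upgrade ``birational'' to ``isomorphism in codimension two'' by proving that $\psi$ and $\psi^{-1}$ are defined in codimension one and contract no prime divisor. Fix representatives $\psi_X\colon W\to V\subseteq Y$ and $\psi_{X'}\colon W'\to V'\subseteq Y'$ as in Definition~\ref{moq def}. We make three harmless reductions. Since the finite-isotropy locus $X_0$ is intrinsic to the $\T$-action, $\phi$ restricts to an equivariant isomorphism $X_0\xrightarrow{\sim}X_0'$. Replacing $W$ by the union $\bigcup_J W_J$ over all admissible sequences $J$, and $W'$ likewise, we may take $W,W'$ to be $\T$-invariant with $X_0\setminus W$ and $X_0'\setminus W'$ still of codimension at least two, so that $\psi_X,\psi_{X'}$ become $\T$-invariant surjections whose fibres over $V$ and $V'$ are finite unions of orbits, hence of pure dimension one. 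Finally, passing to normalizations, we assume $X,X',Y,Y'$ normal. Now each geometric quotient $W_J\to V$ identifies $\K(Y)=\K(V)$ with the invariant subfield $\K(X)^{\T}\subseteq\K(X)$, and likewise $\K(Y')=\K(X')^{\T}$; since $\phi^{*}\colon\K(X')\xrightarrow{\sim}\K(X)$ restricts to an isomorphism $\K(X')^{\T}\xrightarrow{\sim}\K(X)^{\T}$, it induces an isomorphism $\K(Y')\xrightarrow{\sim}\K(Y)$ compatible with the inclusions into $\K(X)=\K(X')$ --- equivalently, a birational map $\psi\colon Y\dashrightarrow Y'$ making the asserted square commute.

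It remains to control the codimension-one behaviour of $\psi$. Let $E\subseteq Y$ be a prime divisor. As $Y\setminus V$ has codimension at least two, $E\cap V$ is dense in $E$, so $\psi_X^{-1}(E\cap V)$ has a prime component $\widehat E$ dominating $E$, and $\widehat E$ is $\T$-invariant because $\T$ is connected and $\psi_X^{-1}(E\cap V)$ is $\T$-invariant. Then $\phi(\widehat E)$ is a $\T$-invariant prime divisor of $X'$ meeting $W'$ in a dense open subset, and since $\psi_{X'}$ restricted to the invariant divisor $\phi(\widehat E)\cap W'$ has one-dimensional fibres, $E'\coloneqq\overline{\pi'(\phi(\widehat E))}$ is again a prime divisor, now of $Y'$. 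Working at a generic point of $\widehat E$ --- where $\pi'\circ\phi$ is represented by the morphism $\psi_{X'}\circ\phi$ --- and comparing the divisorial valuation of $\K(X)$ along $\widehat E$ with that of $\K(Y)$ along $E$, one finds that $\psi$ extends to a morphism near the generic point of $E$ and sends it to the generic point of $E'$. Thus $\psi$ is a morphism outside a closed subset of codimension at least two and contracts no prime divisor of $Y$; by symmetry the same holds for $\psi^{-1}$.

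Let $U\subseteq Y$ and $U'\subseteq Y'$ be the domains of definition of $\psi$ and $\psi^{-1}$; by the previous step their complements have codimension at least two. The locus $\psi^{-1}(Y'\setminus U')\cap U$ contains no prime divisor, since such a divisor would be contracted by $\psi$ into the codimension-two set $Y'\setminus U'$; hence $U''\coloneqq U\setminus\psi^{-1}(Y'\setminus U')$ still has complement of codimension at least two in $Y$. On $U''$ the maps $\psi$ and $\psi^{-1}$ are mutually inverse morphisms, so $\psi$ restricts to an isomorphism from $U''$ onto the open subset $(\psi^{-1})^{-1}(U'')\subseteq Y'$, whose complement in $Y'$ has codimension at least two by the symmetric argument. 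This yields the required isomorphism in codimension two.

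The crux is the codimension-one statement of the second step: a priori, codimension-two behaviour of $\phi$ upstairs could project to codimension-one behaviour on the quotient, and excluding this rests on two structural inputs whose careful verification is the real work --- that the preimage of a prime divisor of $Y$ under the maximal orbit quotient is a $\T$-invariant prime divisor over which the representative has equidimensional, one-dimensional fibres (so that dimension drops by exactly one), and that an \emph{equivariant} isomorphism carries such an invariant divisor to an invariant divisor of $X'$, whose image in $Y'$ is then again a divisor rather than something of smaller dimension. Everything else --- the function-field computation of the first step and the bookkeeping of the third --- is routine once these are in place.
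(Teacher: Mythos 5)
Your route is genuinely different from the paper's. The paper's proof never leaves the level of the representatives: it restricts $\phi$ to the finite-isotropy loci, shrinks $W$ and $W'$ so that $\phi_{|W}\colon W\to W'$ is an equivariant isomorphism, and then descends this isomorphism along the (geometric) quotients to an isomorphism $V\to V'$ of open subsets whose complements have codimension two; the diagram and the codimension statement come out in one stroke, with no function fields, no valuations and no normality hypotheses on $Y$, $Y'$. You instead build $\psi$ abstractly from $\K(Y)=\K(X)^{\T}\cong\K(X')^{\T}=\K(Y')$ and then upgrade ``birational'' to ``isomorphism in codimension two'' by a divisorial analysis (defined at all codimension-one points, no contracted divisors, plus standard bookkeeping). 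This strategy is viable, and your second paragraph correctly establishes the structural inputs: after replacing $W$ by $\bigcup_J W_J$ the quotient map is $\T$-invariant and surjective with fibres of dimension $\dim\T$, so the component $\widehat E$ dominating $E$ is an invariant prime divisor, $\phi(\widehat E)$ is one as well, and $E'$ is a prime divisor of $Y'$; the third paragraph's bookkeeping is also fine. (For the lemma as stated, ``one-dimensional'' should of course read ``$\dim\T$-dimensional''; nothing changes.)

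Two points keep this from being complete as written. First, the step you yourself flag as the crux --- ``one finds that $\psi$ extends to a morphism near the generic point of $E$ and sends it to the generic point of $E'$'' --- is asserted, not proven. It does hold, and can be closed as follows: since $\psi_X$ maps $\eta_{\widehat E}$ to $\eta_E$, the ring $\Oo_{X,\widehat E}\cap\K(Y)$ is a valuation ring of $\K(Y)$ containing $\Oo_{Y,E}$ and different from $\K(Y)$ (elements of $\mathfrak m_E$ pull back into $\mathfrak m_{\widehat E}$), hence equals $\Oo_{Y,E}$, a DVR having no proper overrings besides the field; for $f\in\Oo_{Y',E'}$ one has $\pi'^*f\in\Oo_{X',\phi(\widehat E)}$ because $\psi_{X'}$ maps $\eta_{\phi(\widehat E)}$ dominantly to $E'$, so $\pi^*\psi^*f=\phi^*\pi'^*f\in\Oo_{X,\widehat E}$ and therefore $\psi^*f\in\Oo_{Y,E}$, with $\mathfrak m_{E'}$ landing in $\mathfrak m_E$; choosing an affine neighbourhood of $\eta_{E'}$ then gives a morphism near $\eta_E$ representing $\psi$ with $\psi(\eta_E)=\eta_{E'}$. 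Second, the reduction ``passing to normalizations'' is not harmless: the conclusion concerns $Y$ and $Y'$ themselves, and your argument genuinely uses that $\Oo_{Y,E}$ is a DVR, i.e.\ normality in codimension one, in both $Y$ and $Y'$. In the setting of this paper the relevant quotients are normal, so nothing is lost, but you should state and use that hypothesis rather than normalize --- note that the paper's shorter argument needs no such assumption.
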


\begin{proof}
    Let $X_0\subset X$, $X_0'\subset X'$ be the open subsets consisting of all points with finite isotropy. Then $\phi_{|X_0}\colon X_0\rightarrow X_0'$ is also an equivariant isomorphism. By Definition \ref{moq def}, $\pi$ and $\pi'$ admit representatives $\pi\colon W\rightarrow V$, $\pi'\colon W'\rightarrow V'$, where $W\subset X_0$, $W'\subset X_0'$ and the complements $X_0\setminus W\subset X_0$, $X_0'\setminus W'\subset X_0'$ and $Y\setminus V\subset Y$, $Y'\setminus V'\subset Y'$ have codimension two. Up to appropriately shrinking $W$ and $W'$, we can assume that $\phi_{|W}\colon W\rightarrow W'$ is also an equivariant isomorphism. Hence there exists an isomorphism $\psi\colon V\rightarrow V'$ that fits in the following commutative diagram:
    \[\xymatrix@=35pt{
    W\ar[d]_{\pi} \ar[r]^{\phi_{|W}} & W'\ar[d]^{\pi'}\\
    V\ar[r]^\psi & V'
    }\]
\end{proof}

\begin{proposition} \label{we get all}
    Let $X$ be a $\Q$-factorial, projective $\K^*$-threefold of Picard number one with a maximal orbit quotient over $\Pp_2$. Then there exists a $\K^*$-equivariant isomorphism $\phi:X\rightarrow X'$, where $X'$ is an explicit $\T$-variety arising form Construction \ref{p2cons T1}.
\end{proposition}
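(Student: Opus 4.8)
The plan is to start from the abstract data guaranteed by the hypotheses and reconstruct the concrete matrix $P$ of Construction \ref{p2cons T1}. First I would recall that, since $X$ is $\Q$-factorial, projective of Picard number one, its Cox ring $\Rr(X)$ together with the $\Cl(X)$-grading determines $X$ uniquely; moreover, by \cite{MR3667033} (as used in Example \ref{running example 2}) together with the presence of the effective $\K^*$-action, one knows $\Rr(X)$ admits a presentation as a complete intersection of the type appearing in Construction \ref{p2cons T1}, whose defining relations are controlled by the maximal orbit quotient data. More precisely, I would invoke the general structure theory of \cite{MR4031105}: the maximal orbit quotient $\pi\colon X\dashrightarrow\Pp_2$ together with the critical divisor $C=C_0+\dots+C_n$ and the pullback decompositions $\pi^*(C_i)=\sum_j l_{ij}D_{ij}$ yields, via Proposition \ref{p2 explicit}, a realization of $\Pp_2$ as an explicit variety $Y(d,g)\subset\Pp(1,1,1,d)$ after a homogeneous change of coordinates, and then an explicit $\T$-variety $X'=X(g,P)$ with a maximal orbit quotient over the same $Y(d,g)$.

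Second, I would match the combinatorics. By Proposition \ref{we get all}'s hypothesis and the discussion preceding it, $X$ is of one of Types A, B, C, according to whether $X^{\K^*}$ hosts exactly one prime divisor, or $\codim_X(X^\T)\ge 2$ with the extra $\pi^*(C_0)$ or $\pi^*(C_3)$ splitting into two components. In each case the columns of the matrix $P$ are forced: the rays $v_{ij}=[l_{ij}u_i,d_{ij}]$ are built from the generator matrix $B$ of $\Pp(1,1,1,d)$ (Construction \ref{p2cons T0}), the multiplicities $l_{ij}$ are exactly the ones in the pullback decomposition, the numbers $d_{ij}$ encode the source/sink behaviour of the $\K^*$-action along $D_{ij}$, and the extra column $v_1=[0,\dots,0,1]$ (Types A, C) or its absence (Type B) records whether there is an isolated fixed point or a second fixed-point divisor. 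The inequality among the $d_{ij}/l_{ij}$ in each Construction is precisely the condition that the corresponding fan $\Sigma(P)$ be complete with the $v_{ij}$ as primitive ray generators, i.e.\ that $Z(P)$ be a genuine fake weighted projective space; this I would verify by computing that the $v_{ij}$ positively span $\Q^{n+1}$. Having produced $X'=X(g,P)$ with $\Cl(X')=K$ and $\Rr(X')=R(g,P)$ (Proposition \ref{T-conditions prop}), and having arranged the grading to agree with that of $\Rr(X)$, the uniqueness of $X$ from its graded Cox ring gives a $\K^*$-equivariant isomorphism $\phi\colon X\to X'$, compatible with the orbit quotients by Lemma \ref{unique moq}.

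The main obstacle I expect is the passage from the abstract maximal orbit quotient to the specific normal form: one must show not merely that \emph{some} explicit $\T$-variety shares the Cox ring of $X$, but that the discrete data $(l_{ij},d_{ij})$ can be chosen so that the grading group $K=\Z^{n+2}/\im(P^*)$ and the weights $Q_P(e_{ij})$ reproduce $\Cl(X)$ and its grading exactly. This requires (i) using that the ineffectivity of the ambient torus action forces the $l_{ij}$ to be the orders of the generic isotropy groups $\K^*_x$ along the $D_{ij}$, which are intrinsic; (ii) a lattice argument showing that the $d_{ij}$, which a priori are only determined up to the freedom in choosing the second row of $P$, can be normalized (e.g.\ so that $d_{i1}=0$ for as many $i$ as possible and the remaining ones lie in prescribed ranges) without changing the isomorphism type of the graded ring; and (iii) checking primitivity of each $v_{ij}$, which may require replacing $l_{ij}$ and $d_{ij}$ by their quotient by $\gcd$, a step that has to be shown not to destroy the other normalizations. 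Once these normalizations are in place the remaining verifications — $K$-integrality of $R(g,P)$ and the $T_{ij}$ being pairwise non-associated $K$-primes, so that Proposition \ref{T-conditions prop} applies — follow from the fact that $X$ itself is irreducible with the asserted Cox ring, transported along the isomorphism of graded rings.
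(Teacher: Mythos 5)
There is a genuine gap, and it sits exactly where the real content of this proposition lies. Your second paragraph begins by asserting that ``$X$ is of one of Types A, B, C'' on the strength of ``the discussion preceding'' the proposition --- but that discussion in the introduction explicitly attributes the trichotomy to Proposition \ref{we get all} itself, so you are assuming what has to be proved. The paper's proof derives the trichotomy: it first cites \cite[Theorem 3.10]{MR4031105} to obtain a $\K^*$-equivariant isomorphism $\phi\colon X\to X'$ with $X'$ an explicit $\T$-variety from the \emph{general} Construction 3.5 of that paper, then uses Lemma \ref{unique moq} together with Proposition \ref{p2 explicit} to normalize the base to $Y(d,g)\subset\Pp(1,1,1,d)$ with generator matrix $B$, and finally runs a counting argument: by \cite[Corollary 3.8]{MR4031105} one has $\dim(X')=\dim(Y)+s$, forcing $s=1$, and
$1=\rank(\Cl(X'))=(n_0-1)+\dots+(n_r-1)+m$, which forces either $m=1$ with all $n_i=1$ (Type A) or $m=0$ with exactly one $n_i=2$ (Types B/C). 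This rank computation is the heart of the proof, and your proposal contains no substitute for it; without it you have no control over how many prime divisors sit over each $C_i$ nor over the number of extra columns $v_k$ in $P$.

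The other weak point is that your route replaces the citation of \cite[Theorem 3.10]{MR4031105} by a hand reconstruction of $P$ and of the Cox ring from the orbit-quotient data (isotropy orders, source/sink behaviour, lattice normalization of the $d_{ij}$, primitivity). That is essentially an attempt to reprove the structure theorem of \cite{MR4031105} for complexity-two torus actions, and the steps you flag as ``obstacles'' --- that the graded presentation exists at all, that the normalizations of $(l_{ij},d_{ij})$ can be made compatibly, and that matching graded Cox rings yields not just an abstract isomorphism but a $\K^*$-\emph{equivariant} one --- are precisely the nontrivial content; they are not follow-ups one can leave to a transport argument. (Also, \cite{MR3667033} concerns trinomial Cox rings of complexity-one actions and is used in the paper only to rule out a $(\K^*)^2$-action in Example \ref{running example 2}; it gives no presentation of $\Rr(X)$ for a threefold with a one-dimensional torus action.) The efficient fix is the paper's: invoke the general theorem to get the equivariant isomorphism onto some explicit $\T$-variety, normalize the base via Proposition \ref{p2 explicit}, and then let the Picard-number-one rank count force the matrix $P$ into one of the shapes of Constructions \ref{cons A}--\ref{cons C}.
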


\begin{proof}
    By \cite[Theorem 3.10]{MR4031105}, there exists a $\K^*$-equivariant isomorphism $\phi:X\rightarrow X'$, where $X'$ is an explicit $\T$-variety arising from \cite[Construction 3.5]{MR4031105}.
    Then we have a commutative diagram
    \[\xymatrix@=35pt{
    X\ar@{-->}[d] \ar[r]^\phi & X'\ar@{-->}[d]\ar@{^{(}->}[r] & Z(P)\ar@{-->}[d]\\
    \Pp_2\ar@{-->}[r]^\psi & Y'\ar@{^{(}->}[r] & Z(B)
    }\]
    where the vertical maps are maximal orbit quotients, $Z(B)$ and $Z(P)$ are toric varieties with generator matrix $B$, $P$ respectively, and $\psi\colon Y\dashrightarrow Y'$ is an isomorphism in codimension two, whose existence is guaranteed by Lemma \ref{unique moq}.
    By Proposition \ref{p2 explicit}, it follows that $Y'=Y(g,d)\subset\Pp(1,1,1,d)$ arises from Construction \ref{p2cons T0} and $B=[u_0,\dots,u_r]$ is the generator matrix of $\Pp(1,1,1,d)$. Then $P$ is an $(r+s)\times(n+m)$ integer matrix of the form
    $$
    P=[v_{ij},v_k]=
    \begin{bmatrix}
    l_{01}u_0 & \dots & l_{0n_0}u_0 & \dots & l_{r1}u_r & \dots & l_{rn_r}u_r & 0 & \dots & 0\\
    d_{01} & \dots & d_{0n_0} & \dots & d_{r1} & \dots & d_{rn_r} & d_1' & \dots & d_m'
    \end{bmatrix},
    $$
    where
    \begin{itemize}
        \item $n_0,\dots,n_r\in\Z_{>0}$ and $m,s\in \Z_{\geq 0}$, with $n\coloneqq n_0+\dots+n_r$;
        \item $l_{ij}\in \Z_{>0}$, $d_{ij}\in\Z^s$ for $i=0,\dots,r$, $j=1,\dots,n_i$;
        \item $d_1',\dots, d_m'\in\Z^s$,
        \item the vectors $v_{ij},v_k\in\Z^{n+m}$ are primitive, pairwise distinct and generate $\Q^{t+s}$ as a vector space.
    \end{itemize}
    Since $X'$ is projective, we can further assume that $v_{ij},v_k\in\Z^{n+m}$ generate $\Q^{t+s}$ as a cone.
    By \cite[Corollary 3.8]{MR4031105} we have $\dim(X')=\dim(Y)+s$ and $\Cl(X')=\Z^{n+m}/\im(P^*)$. Since $\dim(X')=3$ and $\dim(Y)=2$, it follows that $s=1$. Furthermore 
    $$1=\rank(\Cl(X'))=\rank(\Z^{n+m}/\im(P^*))=(n_0-1)+\dots+(n_r-1)+m,$$ hence either 
    $m=n_0=\dots=n_r=1$ (Case A) or $n_i=2$ for a certain $i=0,\dots,r$, $n_j=1$ for $j\neq i$ and $m=0$ (Case B or C).
    We conclude that $X'$ is an explicit $\T$-variety arising from Construction \ref{p2cons T1}.
\end{proof}

\section{Gorenstein Fano threefolds with a $\K^*$-action} \label{classification section}

We determine combinatorial criterions for the non-toric, Fano and Gorenstein conditions of an explicit $\T$-variety $X(g,P)$ arising from Construction \ref{p2cons T1}, see Remark \ref{non-toricity}, Propositions \ref{fanoprop} and $\ref{gorensteincor2}$. These conditions involve only the matrix $P$, leaving us some freedom in the choice of the polynomial vector $g$.

\begin{convention} \label{weight conv}
    Consider $K=\Z^{n+2}/\im(P^*)$ and $\omega_{ij}$, $\omega_1$ as in Constructions \ref{cons A}, \ref{cons B} and \ref{cons C}. As $K$ is of rank one, we can fix an isomorphism
    $$
    K\rightarrow \Z\oplus\bigoplus_{i=1}^{q}\Z/a_i\Z,\hspace{10pt} \omega\mapsto (w,\eta),
    $$
    with $q,a_1,\dots,a_q\in\Z_{>0}$ and such that $\omega_{ij}$, $\omega_1$ are mapped to $(w_{ij},\eta_{ij})$, $(w_1,\eta_1)$ respectively, with $w_{01}>0$.
    
\end{convention}

\begin{definition}\label{fwv}
    Let $P=[v_0,\dots,v_n]$ be a generator matrix and consider the integers
    $$
    \tilde{w}_i\coloneqq |det(v_j; j\neq i)|,\quad i=0,\dots,n, \quad \mu\coloneqq\gcd(w_0,\dots,w_n).
    $$
    Then the \emph{fake weight vector} of $P$ is $\tilde{w}(P)=(\tilde{w}_0,\dots,\tilde{w}_n)$ and the \emph{reduced fake weight vector} of $P$ is $w(P)\coloneqq\mu^{-1}\tilde{w}$.
\end{definition}

\begin{lem} \label{weight lem}
    Let $P$ arise from Construction \ref{cons A}, \ref{cons B} or  \ref{cons C}. According to the type of $P$, the fake weight vector $\tilde{w}\coloneqq\tilde{w}(P)$ is given by:
    \newline
    {\footnotesize
    \begin{itemize}
        \item[A:]
            \begin{itemize}
                \item[] $\tilde{w}_{i1}=l_{01}\cdots l_{n1}/l_{i1}$ for $i=0,1,2$,
                \item[] $\tilde{w}_{i1}=d_{i}l_{01}\cdots l_{n1}/l_{i1}$ for $i\geq 3$,
                \item[] $\tilde{w}_1=l_{01}\cdots l_{n1}\left(\dfrac{d_{01}}{l_{01}}-\dfrac{d_{11}}{l_{11}}-\dfrac{d_{21}}{l_{21}}-\sum_{j=3}^n d_j\dfrac{d_{j1}}{l_{j1}}\right)$.
            \end{itemize}
        \item[B:]
            \begin{itemize}
                \item[] $\tilde{w}_{01}=l_{02}l_{11}\cdots l_{n1}\left(\dfrac{d_{02}}{l_{02}}-\dfrac{d_{11}}{l_{11}}-\dfrac{d_{21}}{l_{21}}-\dfrac{d_3d_{31}}{l_{31}}-\dots-\dfrac{d_nd_{n1}}{l_{n1}}\right)$,
                \item[] $\tilde{w}_{02}=l_{01}l_{11}\cdots l_{n1}\left(\dfrac{d_{11}}{l_{11}}+\dfrac{d_{21}}{l_{21}}+\dfrac{d_3d_{31}}{l_{31}}+\dots+\dfrac{d_nd_{n1}}{l_{n1}}-\dfrac{d_{01}}{l_{01}}\right)$,
                \item[] $\tilde{w}_{i1}=\dfrac{d_il_{11}\cdots l_{n1}}{l_{i1}}(l_{01}d_{02}-l_{02}d_{01})$ for $i\in\{1\dots,n\}$.
            \end{itemize}
        \item[C:]
            \begin{itemize}
                \item[] $\tilde{w}_{i1}=\dfrac{d_il_{11}\cdots l_{n1}}{l_{i1}}(l_{31}d_{32}-l_{32}d_{31})$ for $i\in\{0\dots,n\}\setminus\{3\}$,
                \item[] $\tilde{w}_{31}=l_{01}l_{11}l_{21}l_{32}l_{41}\cdots l_{n1}\left(\dfrac{d_3d_{32}}{l_{32}}-\dfrac{d_{11}}{l_{11}}-\dfrac{d_{21}}{l_{21}}-\dfrac{d_4d_{41}}{l_{41}}-\dots-\dfrac{d_nd_{n1}}{l_{n1}}\right)$,
                \item[] $\tilde{w}_{32}=l_{01}l_{11}l_{21}l_{31}l_{41}\cdots l_{n1}\left(\dfrac{d_{11}}{l_{11}}+\dfrac{d_{21}}{l_{21}}+\dfrac{d_4d_{41}}{l_{41}}+\dots+\dfrac{d_nd_{n1}}{l_{n1}}-\dfrac{d_3d_{31}}{l_{31}}\right)$.
            \end{itemize}
    \end{itemize}
    }
    \noindent Furthermore, with the notation of Convention \ref{weight conv}, the reduced fake weight vector $w\coloneqq w(P)$ of $Z(P)$ coincide with:
    \begin{itemize}
        \item[A:] $(w_{01},\dots,w_{n1},w_1)$;
        \item[B:] $(w_{01},w_{02},w_{11},\dots,w_{n1})$;
        \item[C:] $(w_{01},\dots,w_{31},w_{32},w_{41},\dots, w_{n1})$.
    \end{itemize}
\end{lem}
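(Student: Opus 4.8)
The plan is to compute the determinants $\tilde w_i = |\det(v_j; j \neq i)|$ directly from the explicit shape of $P$ given in Constructions \ref{cons A}, \ref{cons B} and \ref{cons C}, exploiting the block structure inherited from the generator matrix $B$ of $\Pp(1,1,1,d)$. First I would record that in each case the columns of $P$ are of the form $v_{ij} = [l_{ij} u_i, d_{ij}]$ (with an extra column $v_1 = [0,1]$ in Type A), where $u_0, \dots, u_n$ are the columns of $B$. The key preliminary computation is the minors of $B$ itself: since $B = [u_0, \dots, u_n]$ with $u_0 = (-1,-1,-d_3,\dots,-d_n)^{\mathsf T}$ and $u_i = e_i$ for $i \geq 1$, deleting the column $u_i$ gives $|\det(u_j; j \neq i)| = 1$ for $i = 1, 2$, $|\det(u_j; j\neq i)| = d_i$ for $i \geq 3$, and $|\det(u_j; j \neq 0)| = 1$. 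These are exactly the reduced weights of $\Pp(1,1,1,d)$, as expected.

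Next I would handle Type A. Deleting the column $v_{i1}$ from $P$ and expanding along the last row (which contains the entries $-d_{01}, d_{11}, \dots, d_{n1}, 1$), the determinant factors: pulling out $l_{j1}$ from each surviving column $v_{j1}$ produces a factor $\prod_{j \neq i} l_{j1}$ times a minor of the matrix $[u_0, \dots, \widehat{u_i}, \dots, u_n \mid *]$ augmented by the last row. For $i = i1$ with $i \geq 0$, the column $v_1 = [0,1]$ forces the expansion to pick out $\pm\det(u_j; j \neq i)$ scaled by $\prod_{j\neq i} l_{j1}$, giving $\tilde w_{i1} = (l_{01}\cdots l_{n1}/l_{i1}) \cdot |\det(u_j; j\neq i)|$, which yields the stated formulas once the $B$-minors above are inserted. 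For $\tilde w_1$ (deleting $v_1$), one expands the determinant of $[v_{01},\dots,v_{n1}]$; here Cramer-type cofactor expansion along the last row produces $\sum_i (-1)^? d_{i1} \det(l_{j1}u_j; j \neq i)$, and after factoring $\prod l_{j1}$ and using $\det(u_j; j\neq i)$ one obtains precisely $l_{01}\cdots l_{n1}\big(\tfrac{d_{01}}{l_{01}} - \tfrac{d_{11}}{l_{11}} - \tfrac{d_{21}}{l_{21}} - \sum_{j\geq 3} d_j \tfrac{d_{j1}}{l_{j1}}\big)$; the positivity of this expression is exactly the inequality imposed in Construction \ref{cons A}, which also fixes the sign (absolute value). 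Types B and C are the same computation with the single column $v_{02}$ (resp.\ $v_{32}$) replacing the role of $v_1$: deleting any $v_{i1}$ with $i \neq 0$ (resp.\ $i \neq 3$) leaves two columns proportional to $u_0$ (resp.\ $u_3$), so the determinant is a $2\times 2$ combination $l_{01}d_{02} - l_{02}d_{01}$ (resp.\ $l_{31}d_{32} - l_{32}d_{31}$) times the remaining $B$-minor and the product of the other $l$'s; deleting $v_{01}$ or $v_{02}$ (resp.\ $v_{31}$ or $v_{32}$) gives the "difference of fractions" formulas, with signs pinned down by the chains of inequalities in Constructions \ref{cons B} and \ref{cons C}.

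Finally, for the last assertion about the reduced fake weight vector of $Z(P)$: by Definition \ref{fwv}, $w(P) = \mu^{-1}\tilde w(P)$ with $\mu = \gcd$ of the entries, and the claim is merely that, under the isomorphism fixed in Convention \ref{weight conv}, the free parts $w_{ij}$ of the degrees $\omega_{ij} = Q_P(e_{ij})$ coincide with these reduced weights. This follows from the standard toric fact that for a fake weighted projective space with generator matrix $P$, the free part of the grading $Q_P \colon \Z^{n+2} \to K$ sends the canonical basis vector $e_{ij}$ to $\pm\tilde w_{ij}/\mu$ (the Gale-dual / cofactor description of $\im(P^*)^\perp$), together with the normalization $w_{01} > 0$ chosen in Convention \ref{weight conv}, which removes the sign ambiguity and forces agreement with $w(P)$ entrywise. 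I expect the main obstacle to be purely bookkeeping: keeping track of the cofactor signs in the Laplace expansions across all three types and three sub-cases so that the absolute values genuinely match the sign dictated by the defining inequalities — there is no conceptual difficulty, only the risk of a sign slip, which the imposed positivity conditions are designed to absorb.
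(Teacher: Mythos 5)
Your proposal is correct and follows essentially the same route as the paper: the formulas are obtained by direct computation of the maximal minors of $P$ (using the block structure coming from $B$ and the defining inequalities to fix the signs), and the identification of the reduced fake weight vector with the free parts $(w_{ij})$ is exactly the paper's argument that any vector orthogonal to the rows of $P$ (as the free-part character is, since it kills $\im(P^*)$) is a rational multiple of the cofactor vector, with primitivity and the normalization $w_{01}>0$ forcing the multiple to be $1$. Your appeal to the ``Gale-dual/cofactor description'' is just this same one-line orthogonality-plus-primitivity argument stated as a known fact, so there is no substantive difference.
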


\begin{proof}
 The integers $\tilde{w}_{ij}$, $\tilde{w}_1$ are obtained by computing the maximal minors of $P$.
 We prove the second statement for Type A (B and C follow analogously).
    By Convention \ref{weight conv} and Definition \ref{fwv}, both the vector $(w_{01},\dots,w_{n1},w_1)$ and the reduced fake weight vector $w(P)$ are orthogonal to the rows of $P$, hence
    $$
    (w_{01},\dots,w_{n1},w_1)=kw(P)\text{~for~} k\in\Q^*.
    $$ Since both are primitive, $w_{01}>0$ and $\mu^{-1}\tilde{w}_{01}>0$, it follows that $k=1$.
\end{proof}

\begin{remark}\label{fanormk}
Let $X=X(g,P)$ be an explicit $\T$-variety arising from Construction \ref{p2cons T1}. By \cite[Proposition 3.3.3.2]{MR3307753}, the anticanonical class of $X$ is given, according to type, by
\begin{itemize}
    \item[A:] $-\mathcal{K}_X=\sum_{i=0}^2\deg(T_{i1})+\sum_{i=3}^{n}(1-l_{i1})\deg(T_{i1})+\deg(S_1)$;
    \item[B:] $-\mathcal{K}_X=\deg(T_{01})+\deg(T_{02})+\deg(T_{11})+\deg(T_{21})+\sum_{i=3}^{n}(1-l_{i1})\deg(T_{i1})$;
    \item[C:] $-\mathcal{K}_X=\deg(T_{01})+\deg(T_{11})+\deg(T_{21})+(1-l_{32})\det(T_{32})+\vspace{4pt} \\ \sum_{i=3}^{n}(1-l_{i1})\deg(T_{i1}).$
\end{itemize}

\end{remark}

A complete variety is called \textbf{Fano} if the anticanonical class is ample.

\begin{proposition}\label{fanoprop}
Let $X=X(g,P)$ be an explicit $\T$-variety arising from Construction \ref{p2cons T1}. Then, according to type, $X$ is Fano if and only if
\begin{itemize}
    \item[A:] $\dfrac{d_{01}+1}{l_{01}}-\sum_{i=0}^2\dfrac{d_{i1}-1}{l_{i1}}-\sum_{i=3}^n \dfrac{d_i(d_{i1}-1)}{l_{i1}}-\sum_{i=3}^n d_i>0$;
    \item[B:] $\dfrac{d_{02}-d_{01}}{l_{01}d_{02}-l_{02}d_{01}}-\dfrac{l_{02}-l_{01}}{l_{01}d_{02}-l_{02}d_{01}}\left(\sum_{i=1}^n \dfrac{d_id_{i1}}{l_{i1}}\right)+\sum_{i=1}^n\dfrac{d_i}{l_{i1}}-\sum_{i=3}^n d_i>0$;
    \item[C:] $d_3\dfrac{d_{32}-d_{31}}{l_{31}d_{32}-l_{32}d_{31}}-\dfrac{l_{32}-l_{31}}{l_{31}d_{32}-l_{32}d_{31}}\left(\dfrac{d_{01}}{l_{01}}-\dfrac{d_{11}}{l_{11}}-\dfrac{d_{21}}{l_{21}}-\sum_{i=4}^n \dfrac{d_id_{i1}}{l_{i1}}\right)+\vspace{5pt} \\ \dfrac{1}{l_{01}}+
    \dfrac{1}{l_{11}}+\dfrac{1}{l_{21}}+\sum_{i=4}^n\dfrac{d_i}{l_{i1}}-\sum_{i=3}^n d_i>0$.
\end{itemize}
\end{proposition}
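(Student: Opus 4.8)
The plan is to convert the Fano property into a single numerical inequality by combining Remark~\ref{fanormk} with Lemma~\ref{weight lem}. First I would establish the following general fact about any $X = X(g,P)$ from Construction~\ref{p2cons T1}: a $\Q$-Cartier divisor class $\omega \in K = \Cl(X)$ is ample if and only if its free part — the image under the projection $K \to \Z$ fixed in Convention~\ref{weight conv} — is strictly positive. Indeed, by Proposition~\ref{T-conditions prop}, $X$ is $\Q$-factorial, projective and of Picard number one, and each generator $T_{ij}$ (and $S_1$, in Type~A) is a nonzero homogeneous element of the Cox ring, hence defines an effective divisor in its degree $\omega_{ij}$ (resp. $\omega_1$), whose free part is the positive integer $w_{ij}$ (resp. $w_1$). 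In the one-dimensional space $N^1(X)_\R$, the nef cone is full-dimensional and pointed, hence a ray; since every ample class is effective and all $\omega_{ij}$ are effective with positive free part, this ray equals the effective ray $\R_{\geq 0}\cdot\omega_{01}$. Because ampleness depends only on the numerical, equivalently the $\Q$-rational, class, and $K \to \Cl(X)_\Q = \Q$ is exactly the map ``take the free part'', the claim follows. In particular, since $\Q$-factoriality makes $-\mathcal{K}_X$ a $\Q$-Cartier class, $X$ is Fano if and only if the free part of $-\mathcal{K}_X$ is positive.

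Next I would compute that free part. Applying $K \to \Z$ to each of the three expressions for $-\mathcal{K}_X$ in Remark~\ref{fanormk} replaces every $\deg(T_{ij}) = \omega_{ij}$ by its free part $w_{ij}$, and by Lemma~\ref{weight lem} the vector of these free parts is the reduced fake weight vector $w(P) = \mu^{-1}\tilde{w}(P)$. Substituting the closed formulas for the $\tilde{w}_{ij}$ from Lemma~\ref{weight lem} and dividing out the positive common factors — $\mu$ in all cases, together with $l_{01}\cdots l_{n1}$ in Type~A, $l_{11}\cdots l_{n1}$ in Types~B and~C, and in addition $l_{01}d_{02} - l_{02}d_{01}$ in Type~B and $l_{31}d_{32} - l_{32}d_{31}$ in Type~C — turns ``free part $>0$'' into the displayed inequalities, where one also uses $d_1 = d_2 = 1$ since $C_1, C_2$ are lines. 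Dividing by the last two factors is legitimate because they are positive: $l_{01}d_{02} - l_{02}d_{01} > 0$ is the inequality $d_{02}/l_{02} > d_{01}/l_{01}$ from Construction~\ref{cons B}, and $l_{31}d_{32} - l_{32}d_{31} > 0$ follows from $d_3 d_{32}/l_{32} > d_3 d_{31}/l_{31}$ in Construction~\ref{cons C} together with $d_3 > 0$.

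The hard part will be the first step, namely pinning down the ample cone of $X$ as the positive ray and checking that ampleness of $-\mathcal{K}_X$ does not see the torsion part of its class in $K$. Everything afterwards is a mechanical, if somewhat lengthy, simplification carried out separately for Types~A, B and~C, the only thing to watch being that the mixed terms in the $d_{ij}/l_{ij}$ collapse exactly to the stated expressions.
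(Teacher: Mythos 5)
Your proposal is correct and follows essentially the same route as the paper: reduce the Fano condition, via Picard number one, to positivity of the free part of $-\mathcal{K}_X$, then compute that free part from Remark~\ref{fanormk} together with the (reduced) fake weight vector of Lemma~\ref{weight lem} and clear the positive factors. The only difference is that you spell out the justification of the first reduction (the ample ray being the positive ray), which the paper simply asserts.
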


\begin{proof}
We prove the proposition for Type A (B and C follow analogously). Since $X$ has Picard number one, it is Fano if and only if the free part of $-\mathcal{K}_X$ is positive.
By Lemma \ref{weight lem}, it follows that $X$ is Fano if and only if
$$
\sum_{i=0}^2 \tilde{w}_{i1}+\sum_{i=3}^n(1-l_{i1})\tilde{w}_{i1}+\tilde{w}_1>0,
$$
and we get the thesis.
\end{proof}

\begin{definition}
    Let $X=X(g,P)$ arise form Construction \ref{p2cons T1}, $\Sigma(P)$ and $Z(P)$ as in Reminder \ref{fwps rem}. We call $\sigma\in\Sigma(P)$ an $X$-cone if the corresponding toric orbit $\T^{n+1}\cdot z_\sigma$ meets $X\subset Z(P)$.
\end{definition}

\begin{proposition}\label{cartierprop}
Let $X=X(g,P)$ arise form Construction \ref{p2cons T1}, $\Sigma(P)$ and $Z(P)$ as in Reminder \ref{fwps rem}. Consider on $X$ a divisor class
$$
w=\sum_{i,j} a_{ij}\omega_{ij}+a_1\omega_1
$$
with $a_{ij},a_1\in\Q$ and an $X$-cone $\sigma\in\Sigma(P)$.
Then $w$ is Cartier on $X_\sigma$ if and only if there exists $u\in\Z^{n+1}$ with $\langle u,v_{ij}\rangle=a_{ij}$ and $\langle u,v_1\rangle=a_1$ for all $v_{ij},v_1\in\sigma$.
\end{proposition}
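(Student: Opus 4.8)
My plan is to reduce the statement to the classical combinatorial description of Cartier divisors on affine toric varieties, transported along the closed embedding $X \subseteq Z(P)$.

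First I would recall from Proposition~\ref{T-conditions prop} that the restriction map $\Cl(Z(P)) \to \Cl(X)$ is an isomorphism identifying both groups with $K = \Z^{n+2}/\im(P^*)$, under which the class of the toric prime divisor $D_v \subseteq Z(P)$ attached to a primitive ray generator $v$ of $\Sigma(P)$ goes to $Q_P(e_v)$. Hence the given class $w = \sum_{i,j} a_{ij}\omega_{ij} + a_1\omega_1$ is the restriction to $X$ of the class of the $\Q$-divisor $\sum_{i,j} a_{ij} D_{v_{ij}} + a_1 D_{v_1}$ on $Z(P)$; and since $\sigma$ is an $X$-cone, $X$ meets the toric orbit $\Oo(\sigma)$, so $X_\sigma = X \cap Z(P)_\sigma$ is a nonempty closed subvariety of the affine toric chart $Z(P)_\sigma$.

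The crux is then to show that $w$ is Cartier on $X_\sigma$ if and only if it is Cartier on the ambient affine chart $Z(P)_\sigma$. For this I would invoke that the embedding $X \subseteq Z(P)$ from Construction~\ref{p2cons T1} is a \emph{neat} embedding of an explicit $\T$-variety --- equivalently (Proposition~\ref{T-conditions prop}) that $R(g,P)$ is $K$-integral with the $T_{ij}$ pairwise non-associated $K$-primes. For such embeddings, restriction induces, for every face $\tau \preceq \sigma$ and every $x \in X$ lying over a point $z \in \Oo(\tau)$, an isomorphism of local class groups $\Cl(Z(P),z) \xrightarrow{\sim} \Cl(X,x)$ compatible with restriction of divisor classes (this belongs to the theory of neat embeddings in \cite{MR3307753}; compare also \cite{MR4031105}). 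Since a Weil divisor class is Cartier precisely when it is trivial in every local class group, and since on the affine toric variety $Z(P)_\sigma$ this may be tested at the generic point of the minimal orbit $\Oo(\sigma)$ (the specialization maps $\Cl(Z(P),z_\sigma) \to \Cl(Z(P),z_\tau)$ being surjective), the two Cartier conditions coincide. I expect this transfer to be the main obstacle: it is exactly where the neatness hypotheses of Proposition~\ref{T-conditions prop} enter, guaranteeing that passing from $Z(P)$ to $X$ leaves the local class groups along the orbits meeting $X$ unchanged.

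It then remains to apply the standard toric criterion on $Z(P)_\sigma$: with $N = \Z^{n+1}$ the cocharacter lattice of the acting torus and $M$ its dual, the class of a torus-invariant divisor $\sum_v a_v D_v$ is Cartier on the affine chart $Z(P)_\sigma$ if and only if there is $u \in M = \Z^{n+1}$ with $\langle u, v \rangle = a_v$ for every primitive ray generator $v$ of $\sigma$; indeed on an affine toric variety the Picard group is trivial, so Cartier means principal, which for a torus-invariant divisor means cut out by a single character (see \cite{toricbook}). Specializing to the rays $v_{ij}, v_1$ contained in $\sigma$ yields precisely the claimed equivalence; and for integral coefficients the criterion is independent of the chosen lift of $w$, since any two integral lifts differ by an element of $\im(P^*)$.
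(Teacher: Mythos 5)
Your proof is correct and takes essentially the same route as the paper: identify $w$ with the restriction of an invariant divisor class on $Z(P)$, reduce the Cartier question on $X_\sigma$ to the ambient affine toric chart $Z(P)_\sigma$, and then apply the standard criterion that an invariant divisor on an affine toric chart is Cartier iff it is principal, i.e.\ given by a character $u$ with the prescribed values on the rays of $\sigma$. The only difference is that you spell out the transfer step (Cartier on $X_\sigma$ iff Cartier on the ambient chart) via the local class group isomorphisms coming from the neat embedding and the fact that $\sigma$ is an $X$-cone, whereas the paper's proof simply asserts this equivalence.
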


\begin{proof}
Recall that the divisor $w$ is Cartier if it's the intersection of $X$ with a Cartier divisor $w^Z$ of the ambient toric variety $Z(P)$. On every cone $\sigma\in\Sigma(P)$, $w^Z$ is determined by a character $u\in\Z^{n+1}$, and it is principal on $\sigma$ if and only if $\langle u,v_{i1}\rangle=a_{i1}$ and $\langle u,v_1\rangle=a_1$ for every $v_{i1},v_1\in\sigma$.
\end{proof}

A variety is called \textbf{Gorenstein} if its canonical class is Cartier. Combining Remark \ref{fanormk} and Proposition \ref{cartierprop} we obtain the following characterisation.

\begin{corollary}\label{gorensteincor}
Let $X=X(g,P)$ be an explicit $\T$-variety arising from Construction \ref{p2cons T1}.
Then, according to type, $X$ is Gorenstein if and only if for every maximal $X$-cone $\sigma$ there is a linear form $u\in\Z^{n+1}$ with
\begin{itemize}
    \item[A:] $\langle u,v_{i1}\rangle=1 \text{~for~}i=0,1,2,\hspace{10pt} \langle u,v_{i1}\rangle=1-l_{i1} \text{~for~}i\geq 3, \hspace{10pt} \langle u,v_1\rangle=1$,
    \item[B:] $\langle u,v_{ij}\rangle=1 \text{~for~}i=0,1,2,\hspace{10pt} \langle u,v_{i1}\rangle=1-l_{i1} \text{~for~}i=3,\dots,n$,
    \item[C:] $\langle u,v_{i1}\rangle=1 \text{~for~}i=0,1,2,\hspace{10pt} \langle u,v_{i1}\rangle=1-l_{i1} \text{~for~}i\geq 3, \hspace{10pt} \langle u,v_{32}\rangle=1-l_{32}$,
\end{itemize}
    for all $v_{ij},v_1\in\sigma$.
\end{corollary}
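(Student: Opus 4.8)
The plan is to combine the anticanonical class formula of Remark~\ref{fanormk} with the local Cartier criterion of Proposition~\ref{cartierprop}. First I would observe that $X$ is Gorenstein if and only if $\mathcal{K}_X$ is Cartier, equivalently $-\mathcal{K}_X$ is Cartier, since the negative of a Cartier divisor is Cartier. Being Cartier is a local condition on $X$, so it suffices to test it on an affine open cover; I would use the cover of $X$ by the charts $X_\sigma$ with $\sigma$ ranging over the $X$-cones of $\Sigma(P)$, and reduce further to the maximal $X$-cones, using that if $\tau$ is a face of $\sigma$ then $X_\tau\subseteq X_\sigma$, so Cartierness on $X_\sigma$ implies it on all its faces. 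Thus $X$ is Gorenstein precisely when $-\mathcal{K}_X|_{X_\sigma}$ is Cartier for every maximal $X$-cone $\sigma$.

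Second, I would fix a maximal $X$-cone $\sigma$ and read off the coordinates of $-\mathcal{K}_X$ from Remark~\ref{fanormk}. Writing $-\mathcal{K}_X=\sum_{i,j}a_{ij}\omega_{ij}+a_1\omega_1$, in Type~A one has $a_{i1}=1$ for $i=0,1,2$, $a_{i1}=1-l_{i1}$ for $i\ge 3$ and $a_1=1$; in Type~B one has $a_{0j}=1$ for $j=1,2$, $a_{i1}=1$ for $i=1,2$ and $a_{i1}=1-l_{i1}$ for $i\ge3$; in Type~C one has $a_{i1}=1$ for $i=0,1,2$, $a_{i1}=1-l_{i1}$ for $i\ge3$ and $a_{32}=1-l_{32}$. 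Applying Proposition~\ref{cartierprop} to $w=-\mathcal{K}_X$ and $\sigma$, this class is Cartier on $X_\sigma$ if and only if there is $u\in\Z^{n+1}$ with $\langle u,v_{ij}\rangle=a_{ij}$ and $\langle u,v_1\rangle=a_1$ for all generators of $\sigma$. Substituting the coefficients above yields exactly the linear system in the statement for the given type, and conversely any such $u$ certifies Cartierness on $X_\sigma$. Combining with the first step finishes the proof.

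I expect the only genuinely delicate point to be the reduction in the first step: one must make sure the charts $X_\sigma$ attached to maximal $X$-cones really do cover $X$ (cones that are not $X$-cones impose no condition, since their toric orbit misses $X$), which is part of the general theory of explicit $\T$-varieties inside toric ambients and can be quoted from \cite{MR4031105} and \cite{MR3307753}. Beyond that the three types run in parallel: the combinatorial argument is identical, only the coefficient list fed into Proposition~\ref{cartierprop} changes, and the linear form $u$ is allowed to depend on $\sigma$ (reflecting that $-\mathcal{K}_X$ need only be locally principal, not globally).
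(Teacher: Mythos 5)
Your argument is exactly the paper's: the corollary is obtained there by combining the anticanonical class formula of Remark~\ref{fanormk} with the Cartier criterion of Proposition~\ref{cartierprop}, which is precisely what you do, and your coefficient lists for the three types are read off correctly. The extra details you supply (Cartierness of $\mathcal{K}_X$ versus $-\mathcal{K}_X$, covering $X$ by the charts of $X$-cones, and reducing to maximal $X$-cones via faces) are the routine points the paper leaves implicit, so the proposal is correct and essentially identical in approach.
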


\begin{proposition}\label{X-cones}
Let $X=X(g,P)$ arise from Construction \ref{p2cons T1}. Then, according to type, each of the following 
is an $X$-cone:
\begin{itemize}
\item[A:] $\{v_{01},\dots,v_{n1}\},\hspace{10pt}\{v_{i1},v_1\} \text{~for~}i=0,\dots,n$;
\item[B:] $\{v_{01},v_{11},\dots,v_{n1}\},\hspace{7pt},\{v_{02},v_{11},\dots,v_{n1}\},\hspace{7pt}\{v_{01},v_{02},v_{i1}\} \text{~for~}i=1,\dots,n$;
\item[C:] $\{v_{01},\dots,\widehat{v_{32}},\dots,v_{n1}\},\hspace{7pt}\{v_{01},\dots,\widehat{v_{31}},\dots,v_{n1}\},\hspace{7pt}\{v_{31},v_{32},v_{i1}\} \text{~for~}i\neq 3.$
\end{itemize} 
\end{proposition}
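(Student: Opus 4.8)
The plan is to translate the geometric condition ``$\sigma$ is an $X$-cone'' into a solvability statement about the defining relations $h_i$, and then to write down explicit solutions. By Proposition~\ref{T-conditions prop} we have $\Cl(X)=K$ and $\Rr(X)=R(g,P)$, so the characteristic space of $X$ sits inside the characteristic space $\hat Z=\K^{n+2}\setminus\{0\}$ of the fake weighted projective space $Z(P)$ as the closed subvariety $\hat X=V(h_3,\dots,h_n)\cap\hat Z$. Under the toric quotient $p\colon\hat Z\to Z(P)$ the orbit $\T^{n+1}\cdot z_\sigma$ of a cone $\sigma=\cone(v_k\,;\,k\in I_\sigma)\in\Sigma(P)$ has preimage exactly $\{z\in\K^{n+2}\,;\,z_k=0\iff k\in I_\sigma\}$, so $\sigma$ is an $X$-cone if and only if there is a $z\in\K^{n+2}$ whose zero coordinates are precisely those indexed by $I_\sigma$ and with $h_i(z)=0$ for all $i=3,\dots,n$. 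Since $Z(P)$ is a fake weighted projective space every proper subset of its rays spans a cone of $\Sigma(P)$, so each set in the three lists really is a cone, and since none of them uses all the rays such a $z$ is automatically nonzero. Thus I would just exhibit, for each listed cone, a point $z$ of the required shape.

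For the full-dimensional cones in the lists --- $\{v_{01},\dots,v_{n1}\}$ in Type~A, $\{v_{01},v_{11},\dots,v_{n1}\}$ and $\{v_{02},v_{11},\dots,v_{n1}\}$ in Type~B, and $\{v_{01},\dots,\widehat{v_{32}},\dots,v_{n1}\}$ and $\{v_{01},\dots,\widehat{v_{31}},\dots,v_{n1}\}$ in Type~C --- exactly one ray is omitted, and I would take $z$ with the corresponding coordinate equal to $1$ and every other coordinate equal to $0$. Inspecting Constructions~\ref{cons A}, \ref{cons B} and \ref{cons C}, the monomial in the $T_{ij}$ occurring in $h_i$ then contains a factor that was set to zero, while $g_i$ is evaluated at $(0,0,0)$ and so vanishes because $\deg g_i=d_i>0$; hence $h_i(z)=0$ for all $i$. (This $z$ is just the source or the sink of the $\K^*$-action, compare the isolated fixed point in Example~\ref{running example 1}.)

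For the cones $\{v_{i1},v_1\}$ of Type~A I would argue uniformly. Both $S_1$ and every $T_{i1}$ with $i\ge 3$ are distinct from the three ``slot'' variables $T_{01},T_{11},T_{21}$ of the polynomials $g_m$, so killing $T_{i1}$ and $S_1$ destroys at most one slot. If $i\ge 3$, use that $C_i\subset\Pp_2$ is irreducible and different from the coordinate lines $C_0,C_1,C_2$, hence meets the torus $(\K^*)^2\subset\Pp_2$ in an infinite set; pick $[a_0\!:\!a_1\!:\!a_2]$ in that set but off the finitely many points of $C_i\cap C_m$, $m\ne i$, and set $T_{01}^{l_{01}}=a_0$, $T_{11}^{l_{11}}=a_1$, $T_{21}^{l_{21}}=a_2$ with all $T_{j1}\ne 0$, $T_{i1}=S_1=0$, and each $T_{m1}$ a nonzero $l_{m1}$-th root of $-g_m(a_0,a_1,a_2)$. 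If $i\in\{0,1,2\}$, killing $T_{i1}$ forces one slot to vanish, but no $C_m$ with $m\ge 3$ equals the corresponding coordinate line, so the binary forms obtained from the $g_m$ by putting that slot to $0$ are not identically zero, and a general choice of the two surviving slot values in $\K^*$ keeps all of them nonzero, after which I extract roots for the $T_{m1}$ as before. The same scheme should handle the cones $\{v_{31},v_{32},v_{i1}\}$ of Type~C, with $C_3$ (of degree $d_3\ge 2$) playing the role of $C_0$.

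The delicate point --- and the one I expect to be the main obstacle --- concerns the cones $\{v_{01},v_{02},v_{i1}\}$ of Type~B (and their Type~C analogues): killing $T_{01},T_{02}$ already annihilates the $C_0$-slot, so if moreover $i\in\{1,2\}$ a second slot dies and $h_m$ collapses to $T_{m1}^{l_{m1}}+\lambda_m\cdot(\text{a power of the one surviving slot variable})$, where $\lambda_m$ is the coefficient of the relevant pure power in $g_m$, i.e.\ essentially the value of $g_m$ at a vertex of the coordinate triangle; and for $i\ge 3$ one needs a common zero of $g_i$ with a linear slice that stays in the torus and off the other $C_m$. Solvability with all $T_{m1}\ne 0$ therefore forces a non-degeneracy of the arrangement $C=C_0+\dots+C_n$ --- that $C_3,\dots,C_n$ avoid the three points $C_i\cap C_j$, $0\le i<j\le 2$, and meet one another transversally enough inside the torus --- which I would import from the normal form for $g$ fixed in Section~\ref{construction section} (after the admissible coordinate changes on $\Pp_2$ that preserve $C_0,C_1,C_2$). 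Granting this, the construction of the previous paragraph applies in every remaining case and all the cones in the three lists are $X$-cones.
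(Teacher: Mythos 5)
Your reduction of the $X$-cone condition to the existence of a point of $V(h_3,\dots,h_n)$ whose zero coordinates are exactly the rays of $\sigma$ is the right translation, and your treatment of the full-dimensional cones (set the one omitted coordinate to $1$, all others to $0$) is correct and is essentially the paper's own first step, which records $\Spec(\Rr(X))\cap V(T_{01},\dots,T_{n1})\cong\K$. For the remaining cones, however, the paper does not build explicit points at all: it runs a codimension estimate on the total coordinate space, $\dim\bigl(\Spec(\Rr(X))\cap V(T_{i1},T_1)\bigr)\geq\dim\Spec(\Rr(X))-2$, and concludes from there, so it never needs any information about the position of the curves $C_3,\dots,C_n$. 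Your route does need such information, and that is where the proposal breaks down.

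Concretely, for the Type B cones $\{v_{01},v_{02},v_{i1}\}$ (and the Type C cones $\{v_{31},v_{32},v_{i1}\}$) your construction requires, e.g., that no $C_m$ with $m\geq 3$ passes through the vertex $C_0\cap C_1$ or $C_0\cap C_2$, and that $C_i$ meets the line $C_0$ at a point of the torus lying off the other curves. You propose to ``import'' this non-degeneracy from a normal form for $g$ fixed in Section \ref{construction section}, but no such normal form exists: Construction \ref{p2cons T0} allows arbitrary pairwise distinct irreducible homogeneous $g_i$, and Proposition \ref{p2 explicit} only normalizes the three linear generators to $T_0,T_1,T_2$. Moreover the needed genericity cannot be produced afterwards, since any coordinate change preserving $C_0,C_1,C_2$ permutes the three vertices, so ``$C_m$ passes through a vertex'' is an invariant of the arrangement. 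For instance, take Type B with $n=3$ and $g_3=T_0^2+T_1T_2$: setting $T_{01}=T_{02}=T_{31}=0$ turns $h_3$ into $T_{11}^{l_{11}}T_{21}^{l_{21}}$, so there is no point with zero set exactly $\{T_{01},T_{02},T_{31}\}$ and your scheme (indeed any scheme) cannot produce one. So the final step of your argument is not closed as written, and closing it would require either adding an explicit general-position hypothesis on $g$ or switching to an argument, like the paper's dimension count on $\Spec(\Rr(X))$, that does not go through explicit points on the plane.
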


\begin{proof}
    We prove the proposition for Type A (B and C follow analogously).
    First, notice that
    $$
    \Spec(\Rr(X))\cap V(T_{01},\dots,T_{n1})\cong \K,
    $$
    hence $\{v_{01},\dots,v_{n1}\}$ is an $X$-cone. 
    Analogously, for $i=0,\dots,n$ we have
    $$
    \dim(\Spec(\Rr(X))\cap V(T_{i1},T_1))\geq \dim(\Spec(\Rr(X)))-2=2,
    $$
    hence $\{v_{i1},v_1\}$ is an $X$-cone.
\end{proof}

\begin{proposition} \label{maximal cones}
    Let $X=X(g,P)$ arise from Construction \ref{p2cons T1}, $\Sigma(P)$ as in Reminder \ref{fwps rem}. Then every $X$-cone is the face of one of the following cones:
\begin{itemize}
    \item[A:] $\{v_{01},\dots,v_{n1}\},\hspace{5pt}\{v_{01},\dots,v_{n1},v_{1}\}\setminus\{v_{i1}\}$ for $i=0,1,2$;
    \item[B:] $\{v_{01},v_{11},\dots,v_{n1}\},\hspace{5pt}\{v_{02},v_{11},\dots,v_{n1}\},\hspace{5pt}
    \{v_{01},v_{02},v_{11},\dots,v_{n1}\}\setminus\{v_{i1}\}$ for $i=1,2$;
    \item[C:] $\{v_{01},\dots,\widehat{v_{32}},\dots,v_{n1}\},\hspace{5pt}
    \{v_{01},\dots,v_{n1}\}\setminus\{v_{i1}\}$ for $i=1,2,3$.
\end{itemize}
\end{proposition}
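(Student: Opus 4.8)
The plan is to play the combinatorics of $\Sigma(P)$ against the explicit equations $h_3,\dots,h_n$ cutting $X$ out of $Z(P)$. Being a fake weighted projective space, $Z(P)$ has $\Sigma(P)$ the complete simplicial fan on the $n+2$ columns of $P$ in which every proper subset of columns spans a cone; in particular the maximal cones are the $n+2$ cones $\widehat\sigma_v$ obtained by deleting a single column $v$, and every cone of $\Sigma(P)$ is a face of one of them. The cones appearing in the three lists are themselves maximal: they are $\widehat\sigma_{v_{01}},\widehat\sigma_{v_{11}},\widehat\sigma_{v_{21}}$ together with $\widehat\sigma_{v_1}=\{v_{01},\dots,v_{n1}\}$ in Type A, they are $\widehat\sigma_{v_{01}},\widehat\sigma_{v_{02}},\widehat\sigma_{v_{11}},\widehat\sigma_{v_{21}}$ in Type B, and they are $\widehat\sigma_{v_{11}},\widehat\sigma_{v_{21}},\widehat\sigma_{v_{31}},\widehat\sigma_{v_{32}}$ in Type C. Since a cone $\tau$ fails to be a face of $\widehat\sigma_v$ exactly when $v\in\tau$, what has to be shown is that no $X$-cone contains all of the columns deleted by the listed maximal cones --- the set $\{v_1,v_{01},v_{11},v_{21}\}$ in Type A, $\{v_{01},v_{02},v_{11},v_{21}\}$ in Type B, and $\{v_{11},v_{21},v_{31},v_{32}\}$ in Type C.

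To test this I would work in Cox coordinates. Under the toric Cox construction $\K^{n+2}\setminus\{0\}\to Z(P)$ the orbit of $\cone(v_i\colon i\in I)$ pulls back to $\{z\colon z_i=0\iff i\in I\}$ and $X$ to $\Spec(\Rr(X))\setminus\{0\}=V(h_3,\dots,h_n)\setminus\{0\}$; hence $\cone(v_i\colon i\in I)$ is an $X$-cone precisely when $V(h_3,\dots,h_n)$ contains a point whose vanishing coordinates are exactly those indexed by $I$. The elementary input that drives everything is that each $g_i$ is homogeneous of degree $d_i\ge 1$, so $g_i(0,0,0)=0$: therefore on the coordinate subspace $V(T_{01},T_{11},T_{21})$ --- and, in Type B, on $V(T_{01},T_{02},T_{11},T_{21})$ --- every summand $g_i(\dots)$ vanishes identically, and the relations collapse to $h_i=T_{i1}^{l_{i1}}$ $(i=3,\dots,n)$ in Types A and B, and to $h_3=T_{31}^{l_{31}}T_{32}^{l_{32}}$ and $h_i=T_{i1}^{l_{i1}}$ $(i\ge 4)$ in Type C. Intersecting with $V(h_3,\dots,h_n)$ one then reads off the $S_1$-axis $V(T_{01},\dots,T_{n1})$ in Type A, the single point $0$ in Type B, and the union of the $T_{31}$- and $T_{32}$-axes in Type C.

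For Types A and B this already concludes. In Type A, an $X$-cone containing $v_{01},v_{11},v_{21}$ would have its witnessing point on the $S_1$-axis, hence would contain $v_{01},\dots,v_{n1}$; adjoining the supposed further column $v_1$ gives the full set of $n+2$ columns, which is not a cone of $\Sigma(P)$ --- so no $X$-cone contains $\{v_1,v_{01},v_{11},v_{21}\}$. In Type B, an $X$-cone containing $v_{01},v_{02},v_{11},v_{21}$ would, by the collapsed relations $h_i=T_{i1}^{l_{i1}}$, have witnessing point $0$ --- again impossible.

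Type C is where I expect the main obstacle. Suppose an $X$-cone $\tau$ contains $v_{11},v_{21},v_{31},v_{32}$. If also $v_{01}\in\tau$, then its witnessing point satisfies $z_{01}=z_{11}=z_{21}=0$ (so the relations $h_i=T_{i1}^{l_{i1}}$, $i\ge 4$, give $z_{i1}=0$) and $z_{31}=z_{32}=0$, hence is $0$ --- impossible. If $v_{01}\notin\tau$, the witnessing point has $z_{01}\ne 0$ and $z_{11}=z_{21}=z_{31}=z_{32}=0$, so $h_3$ reads $0=g_3(z_{01}^{l_{01}},0,0)$; the shape of $P$ alone does not exclude this, and one genuinely needs $g_3(z_{01}^{l_{01}},0,0)\ne 0$, i.e.\ that $g_3$ involves the pure power $T_0^{d_3}$, equivalently that the curve $C_3$ avoids the point $C_1\cap C_2$. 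I would dispose of this last case using the freedom in Construction \ref{p2cons T0} to choose the three general lines $C_0,C_1,C_2$ --- relabelling so that $C_3$ misses one of the three points $C_i\cap C_j$ --- together with the minimality of $C$; pinning down exactly which of these hypotheses is required (or, alternatively, observing that enlarging the Type C list by $\widehat\sigma_{v_{01}}$ makes the obstruction vanish) is the step I would treat most carefully.
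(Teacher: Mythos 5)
Your reduction is exactly the paper's: since every cone of $\Sigma(P)$ is a face of one of the $n+2$ maximal cones obtained by omitting a single column, the statement amounts to showing that no $X$-cone contains the four ``omitted'' rays, and this is tested on the total coordinate space, where it becomes the assertion that $V(h_3,\dots,h_n)$ meets the corresponding coordinate subspace only in the origin. For Type A this is precisely the paper's computation, namely $\Spec(\Rr(X))\cap V(T_{01},T_{11},T_{21},S_1)=\{0\}$, and your Type B argument is the evident analogue; both of these are complete and correct.

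The gap is Type C, and you have located it precisely but not closed it. Your case $v_{01}\notin\tau$ indeed requires $g_3(T_0,0,0)\not\equiv 0$, i.e.\ that the monomial $T_0^{d_3}$ occurs in $g_3$, equivalently that $C_3$ avoids the point $C_1\cap C_2$; neither Construction \ref{cons C} nor the hypotheses of the proposition state this (the paper's proof only treats Type A and declares B and C ``analogous'', so it silently uses exactly this fact). Your proposed repair --- relabel $C_0,C_1,C_2$ so that $C_3$ misses the relevant intersection point --- does not suffice as stated: an irreducible curve of degree $d_3\ge 2$, for instance the conic $V(T_0T_1+T_1T_2+T_2T_0)$, can pass through all three points $C_i\cap C_j$, and then no permutation of coordinates removes the obstruction. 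For such a $g_3$ the point with $T_{01}=1$, $T_{11}=T_{21}=T_{31}=T_{32}=0$ and the remaining equations solved for the $T_{i1}$, $i\ge 4$, lies on $V(h_3,\dots,h_n)$, so the cone spanned by $v_{11},v_{21},v_{31},v_{32}$ (and possibly further $v_{i1}$) is an honest $X$-cone contained in none of the listed cones. Hence to finish Type C one must either impose and justify the extra hypothesis that $T_0^{d_3}$ appears in $g_3$, or enlarge the Type C list by the maximal cone omitting $v_{01}$, which would in turn feed additional conditions into Corollary \ref{gorensteincor} and Proposition \ref{gorensteincor2}. As written, your proposal does not prove the Type C case --- although your analysis is a sharp reading of exactly where the paper's ``analogously'' is doing hidden work.
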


\begin{proof}
    We prove the proposition for Type A (B and C follow analogously). Since
    $$
    \Spec(\Rr(X))\cap V(T_{01},T_{11},T_{21},S_1)=\{(0,\dots,0)\},
    $$
    any cone containing $\{v_{01},v_{11},v_{21},v_{1}\}$ is not an $X$-cone. Therefore, every $X$-cone is a face of a maximal cone of $\Sigma(P)$ not containing $\{v_{01},v_{11},v_{21},v_{1}\}$.
\end{proof}

\begin{remark}\label{non-toricity}
    Let an explicit $\T$-variety $X=X(g,P)$ and $h_3,\dots,h_n$ arise from Construction \ref{p2cons T1}. Then, according to type, the following sets of conditions are equivalent to $h_3,\dots,h_n$ having no linear term:
    \begin{itemize}
        \item[A:]
            \begin{itemize}
            \item $l_{31},\dots,l_{n1}\geq 2$,
            \item if $d_i=1$, i.e. $g_i=\alpha_0T_0+\alpha_1T_1+\alpha_2T_2+T_i$, then $l_{j1}\geq 2$ for every $j$ such that $\alpha_j\neq 0$;
            \end{itemize}
        \item[B:]
            \begin{itemize}
            \item $l_{31},\dots,l_{n1}\geq 2$,
            \item if $d_i=1$, i.e. $g_i=\alpha_0T_0+\alpha_1T_1+\alpha_2T_2+T_i$, then $l_{11}\geq 2$ if $\alpha_1\neq 0$ and $l_{21}\geq 2$ if $\alpha_2\neq 0$;
            \end{itemize}
        \item[C:]
            \begin{itemize}
            \item $l_{41},\dots,l_{n1}\geq 2$,
            \item if $d_i=1$, i.e. $g_i=\alpha_0T_0+\alpha_1T_1+\alpha_2T_2+T_i$, then $l_{j1}\geq 2$ for every $\alpha_j\neq 0$.
            \end{itemize}
    \end{itemize}
    Recall that $X$ is a toric variety if and only if its Cox ring $\Rr(X)$ is a polynomial ring.
   Hence, the above sets of conditions imply that $X$ is non-toric. Furthermore, any non-toric variety $X(g,P)$ arising from Construction \ref{p2cons T1} is $\K^*$-isomorphic to a variety $X(g',P')$ arising from Construction \ref{p2cons T1} and satisfying the above conditions.
\end{remark}

\begin{remark}\label{non-redundancy}
    Let $X\coloneqq X(g,P)$, $X'\coloneqq X(g',P')$, $h_i$ and $h_i'$ arise from Construction \ref{p2cons T1} with Type A.
    If there exist an $n\times n$ unimodular matrix $U$ and a permutation $\pi\in S_n$ with associated permutation matrix $S_{\pi}$ such that
    \begin{itemize}
        \item $UPS_{\pi}=P'$,
        \item $h_i'(T_{01},T_{11},T_{21},T_{i1})=h_i(T_{\pi^{-1}(0)1},T_{\pi^{-1}(1)1},T_{\pi^{-1}(2)1},T_{\pi^{-1}(i)1})$ for every $i$,
    \end{itemize}
    then $X$ and $X'$ are isomorphic. We can get analogous conditions for Type B and C.
    In particular, without loss of generality we will assume that:
    \begin{itemize}
        \item[A:]
        \begin{itemize}
            \item $l_{i1}>d_{i1}\geq 0$ for $i=1,\dots,n$,
            \item $l_{21}\geq l_{11}\geq l_{01}$,
            \item $d_n\geq\dots\geq d_3$,
            \item if $d_m=d_{m+1}=\dots=d_{m+j}$ then $l_{(m+j)1}\geq\dots\geq l_{m1}$ for $m\geq 3$.
        \end{itemize}
        \item[B:]
        \begin{itemize}
            \item $l_{i1}>d_{i1}\geq 0$ for $i=1,\dots,n$,
            \item $l_{21}\geq l_{11}$,
            \item $l_{02}\geq l_{01}$,
            \item $d_n\geq\dots\geq d_3$,
            \item if $d_m=d_{m+1}=\dots=d_{m+j}$ then $l_{(m+j)1}\geq\dots\geq l_{m1}$ for $m\geq 3$.
        \end{itemize}
        \item[C:]
        \begin{itemize}
            \item $l_{i1}>d_{i1}\geq 0$ for $i=1,\dots,n$,
            \item $l_{01}\geq l_{21}\geq l_{11}$,
            \item $l_{32}\geq l_{31}$,
            \item $d_n\geq\dots\geq d_4$,
            \item if $d_m=d_{m+1}=\dots=d_{m+j}$ then $l_{(m+j)1}\geq\dots\geq l_{m1}$ for $m\geq 4$.
        \end{itemize}
    
    \end{itemize}
\end{remark}

\begin{proposition}\label{gorensteincor2}
Let $X=X(g,P)$ be a non-toric explicit $\T$-variety arising form Construction \ref{p2cons T1}. Then, according to type, $X$ is a Gorenstein variety if and only if the following conditions are satisfied:
\begin{itemize}
    \item[A:]
        \begin{itemize}
            \item $\dfrac{1+d_{01}}{{l_{01}}}\in\Z_{>0}$;
            \item $d_{11}=\min(1,l_{11}-1)$, $d_{21}=\min(1,l_{21}-1)$, $d_{j1}=1$ for $i\geq 3$;
            \item $\alpha\coloneqq\dfrac{\dfrac{1+d_{01}}{l_{01}}+\dfrac{1-d_{11}}{l_{11}}+\dfrac{1-d_{21}}{l_{21}}-\sum_{j=3}^n d_j}{\dfrac{d_{01}}{l_{01}}-\dfrac{d_{11}}{l_{11}}-\dfrac{d_{21}}{l_{21}}-\sum_{j=3}^n\dfrac{d_j}{l_{j1}}}\in l\Z,$
        \end{itemize}
        where $l\coloneqq \lcm(l_{11},\dots,l_{n1})$;
    \item[B:]
        \begin{itemize}
            \item $\nu\coloneqq\dfrac{d_{02}-d_{01}}{l_{01}d_{02}-l_{02}d_{01}}\in\Z_{>0}$, \hspace{10pt} $\mu\coloneqq\dfrac{l_{02}-l_{01}}{l_{01}d_{02}-l_{02}d_{01}}\in\Z_{>0}$,
            \item $\alpha_j\coloneqq\dfrac{\dfrac{1}{l_{0j}}+\dfrac{1}{l_{11}}+\dfrac{1}{l_{21}}+\dfrac{d_3}{l_{31}}+\sum_{i=4}^n\dfrac{d_i}{l_{i1}}-\sum_{i=3}^n d_i}{\dfrac{d_{01}}{l_{01}}+\dfrac{d_{11}}{l_{11}}+\dfrac{d_{21}}{l_{21}}+\sum_{i=4}^n\dfrac{d_i d_{i1}}{l_{i1}}-\dfrac{d_{0j}}{l_{0j}}}\in \Z$ for $j=1,2$,
            \item $\lambda_i\coloneqq\dfrac{\mu d_{i1}-1}{{l_{i1}}}\in\Z$, $\beta_i\coloneqq\dfrac{\alpha_1 d_{i1}-1}{{l_{i1}}}\in\Z$, $\gamma_i\coloneqq\dfrac{\alpha_2 d_{i1}-1}{{l_{i1}}}\in\Z$ for $i=1,\dots,n$.
        \end{itemize}
    \item[C:]
        \begin{itemize}
            \item $\nu\coloneqq\dfrac{d_{32}-d_{31}}{l_{31}d_{32}-l_{32}d_{31}}\in\Z_{>0}$, \hspace{10pt} $\mu\coloneqq\dfrac{l_{32}-l_{31}}{l_{31}d_{32}-l_{32}d_{31}}\in\Z_{>0}$,
            \item $\alpha_j\coloneqq\dfrac{\dfrac{1}{l_{01}}+\dfrac{1}{l_{11}}+\dfrac{1}{l_{21}}+\dfrac{d_3}{l_{3j}}+\sum_{i=4}^n\dfrac{d_i}{l_{i1}}-\sum_{i=3}^n d_i}{\dfrac{d_3d_{3j}}{l_{3j}}-\left(\dfrac{d_{01}}{l_{01}}-\dfrac{d_{11}}{l_{11}}-\dfrac{d_{21}}{l_{21}}-\sum_{i=4}^n\dfrac{d_i d_{i1}}{l_{i1}}\right)}\in \Z$ for $j=1,2$,
            \item $\lambda_0\coloneqq\dfrac{\mu d_{01}-1}{{l_{01}}}\in\Z$, $\beta_0\coloneqq\dfrac{\alpha_1 d_{01}+1}{{l_{01}}}\in\Z$, $\gamma_0\coloneqq\dfrac{\alpha_2 d_{01}+1}{{l_{01}}}\in\Z$,
            \item $\lambda_i\coloneqq\dfrac{\mu d_{i1}+1}{{l_{i1}}}\in\Z$, $\beta_i\coloneqq\dfrac{\alpha_1 d_{i1}-1}{{l_{i1}}}\in\Z$, $\gamma_i\coloneqq\dfrac{\alpha_2 d_{i1}-1}{{l_{i1}}}\in\Z$ for $i\neq 3$.
    \end{itemize}
\end{itemize}
\end{proposition}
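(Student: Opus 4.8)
The plan is to reduce the Gorenstein condition from Corollary~\ref{gorensteincor}, which quantifies over all maximal $X$-cones, to the concrete arithmetic conditions in the statement. By Proposition~\ref{maximal cones} and Proposition~\ref{X-cones}, for Type~A the maximal $X$-cones are $\sigma_0\coloneqq\{v_{01},\dots,v_{n1}\}$ and, for $i=0,1,2$, the cones $\tau_i\coloneqq\{v_{01},\dots,v_{n1},v_1\}\setminus\{v_{i1}\}$; analogous lists hold for Types~B and~C. So I first fix the type (presenting Type~A in detail, B and C analogously) and write down, for each maximal $X$-cone, the linear system that the putative $u\in\Z^{n+1}$ must solve according to Corollary~\ref{gorensteincor}. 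Since each $v_{ij}=[l_{ij}u_i,d_{ij}]$ with $u_i$ the columns of $B$, and since the $u_i$ span a hyperplane complement together with the last coordinate, each such system has at most one rational solution $u$; the Gorenstein condition is exactly that this solution exists and is integral, simultaneously for all maximal $X$-cones.

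The key computation is to solve the system attached to $\sigma_0$. Writing $u=(u',c)\in\Z^n\times\Z$ and using that $B=[u_0,\dots,u_n]$ is the generator matrix of $\Pp(1,1,1,d)$, the equations $\langle u,v_{i1}\rangle=1$ for $i=0,1,2$ and $\langle u,v_{i1}\rangle=1-l_{i1}$ for $i\ge 3$ become a linear recursion in the entries of $u'$ and $c$; here one invokes the non-toricity normalization of Remark~\ref{non-toricity} (all $l_{i1}\ge 2$ for $i\ge 3$, etc.) to see which $g_i$ have linear terms. Eliminating $u'$ expresses everything in terms of $c$, and the divisibility requirements $\langle u,v_{i1}\rangle\equiv\cdots\pmod{l_{i1}}$ force $d_{i1}=\min(1,l_{i1}-1)$ for $i=1,2$ and $d_{i1}=1$ for $i\ge3$, together with $(1+d_{01})/l_{01}\in\Z_{>0}$. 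The remaining free scalar $c$ is pinned down modulo $l\coloneqq\lcm(l_{11},\dots,l_{n1})$ by the cones $\tau_0,\tau_1,\tau_2$ — the extra equation $\langle u,v_1\rangle=1$ on each $\tau_i$ couples $c$ to the compatibility of the $i$-th deleted equation — and a short manipulation using Lemma~\ref{weight lem} (the fake weight vector is orthogonal to the rows of $P$) identifies the resulting congruence class with the integrality of the quantity $\alpha$ in the statement. One checks conversely that these conditions produce an integral $u$ for every maximal $X$-cone, so they are also sufficient.

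For Types~B and~C the same scheme applies, but there are now two maximal $X$-cones of "$\sigma_0$-type" (obtained by deleting $v_{02}$ resp.\ $v_{32}$, or $v_{01}$ resp.\ $v_{31}$), each giving its own scalar $\nu$ or $\mu$ and its own system; solving the two in parallel yields the two parameters $\nu,\mu\in\Z_{>0}$ and, after substituting the relation $l_{0j}d_{0k}-l_{0k}d_{0j}$ (resp.\ the $l_{31},l_{32}$ analogue) from Lemma~\ref{weight lem}, the expressions for $\alpha_j$, $\lambda_i$, $\beta_i$, $\gamma_i$. The "$\tau$-type" cones $\{v_{01},v_{02},v_{i1}\}$ (resp.\ $\{v_{31},v_{32},v_{i1}\}$) are three-element cones and their systems are small enough to solve directly; they contribute exactly the per-index divisibility conditions on $\lambda_i,\beta_i,\gamma_i$.

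I expect the main obstacle to be purely bookkeeping: correctly eliminating the vector part $u'$ of the linear form against the columns of $B$ — whose first three columns encode the $\Pp_2$-relations and whose later columns carry the degrees $d_j$ — and tracking how the sign conventions of Convention~\ref{weight conv} and the Fano inequality of Proposition~\ref{fanoprop} interact with the positivity claims $\nu,\mu\in\Z_{>0}$ (as opposed to merely $\in\Z$). The denominators appearing in $\alpha$ and $\alpha_j$ are, up to the positive factor $\mu^{-1}l_{01}\cdots l_{n1}$, exactly $\tilde w_1$ (resp.\ $\tilde w_{02}$, $\tilde w_{32}$) from Lemma~\ref{weight lem}, and the Fano condition guarantees they are nonzero and of the right sign; making this dictionary precise is the one place where care is needed, and once it is set up the integrality statements fall out of the same minor computations already used to prove Lemma~\ref{weight lem}.
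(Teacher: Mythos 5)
Your overall strategy (reduce via Corollary~\ref{gorensteincor} to the solvability in $\Z^{n+1}$ of explicit linear systems attached to distinguished cones, then solve) is the paper's strategy, but there is a genuine gap in how you handle the \emph{necessity} direction. You take the cones listed in Proposition~\ref{maximal cones} to be ``the maximal $X$-cones'' and extract necessary conditions by demanding solvability on them; in particular you let the cones $\tau_i=\{v_{01},\dots,v_{n1},v_1\}\setminus\{v_{i1}\}$ pin down the congruence class of the last coordinate of $u$. But Proposition~\ref{maximal cones} only asserts that every $X$-cone is a \emph{face} of one of the listed cones, not that the listed cones are themselves $X$-cones, and in general they are not: for instance, for generic $g_i$ the toric orbit attached to $\tau_0$ (where $T_{11}=\dots=T_{n1}=S_1=0$, $T_{01}\neq 0$) misses $X$, since $g_i(T_{01}^{l_{01}},0,0)\neq 0$. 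So imposing the Cartier condition on those cones gives only sufficiency; deriving necessary conditions from them is not justified. The paper avoids this by a sandwich argument: necessity is extracted from cones that are proved to be $X$-cones in Proposition~\ref{X-cones} (the full cone $\{v_{01},\dots,v_{n1}\}$ and the small cones $\{v_{i1},v_1\}$, resp.\ $\{v_{01},v_{02},v_{i1}\}$, $\{v_{31},v_{32},v_{i1}\}$ for Types B, C), sufficiency from the bounding cones of Proposition~\ref{maximal cones}, and one then checks the two sets of conditions coincide. Your write-up needs this extra step (or a proof that the $\tau_i$ are $X$-cones, which is false in general).

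A related slip in the Type~A computation: you claim the system attached to $\sigma_0=\{v_{01},\dots,v_{n1}\}$ already forces $d_{11}=\min(1,l_{11}-1)$, $d_{21}=\min(1,l_{21}-1)$, $d_{j1}=1$ for $j\geq 3$ and $(1+d_{01})/l_{01}\in\Z_{>0}$. It does not: $\sigma_0$ does not contain $v_1$, so its unique rational solution has an a priori unknown last coordinate $u_{n+1}$, and integrality of that solution only yields conditions of the form $(1-d_{i1}u_{n+1})/l_{i1}\in\Z$. The clean per-index conditions require $u_{n+1}=1$, i.e.\ cones containing $v_1$ — exactly the two-element $X$-cones $\{v_{i1},v_1\}$ used in the paper — after which the $\sigma_0$-system (not the $\tau_i$) delivers $\alpha\in l\Z$ via the divisibilities $l_{i1}\mid 1-u_{n+1}$. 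Once you reroute the necessity argument through the $X$-cones of Proposition~\ref{X-cones} and verify, as the paper does, that the resulting conditions already imply solvability on the larger cones of Proposition~\ref{maximal cones}, the rest of your plan (and the analogous treatment of Types B and C, where $\nu,\mu,\alpha_j,\lambda_i,\beta_i,\gamma_i$ arise from the corresponding three-element $X$-cones and the two big cones) goes through.
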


\begin{proof}
We prove the proposition for Type A (B and C follow analogously). We know that $X$ is Gorenstein if and only if it satisfies the conditions of Corollary \ref{gorensteincor} associated to its maximal $X$-cones. Hence, the cones of Proposition \ref{X-cones} provide necessary conditions, and the cones of Proposition \ref{maximal cones} provide sufficient conditions.  We write $u=(u_1,\dots,u_{n+1})\in\Z^{n+1}$.
The condition associated to the $X$-cone $\{v_{11},v_{1}\}$ is:
$$
l_{11}u_1+d_{11}u_{n+1}=1, \hspace{5pt} u_{n+1}=1 \implies \dfrac{1-d_{11}}{l_{11}}\in\Z.
$$
If $l_{11}=1$ then $d_{11}=0$, since $l_{11}>d_{11}\geq 0$. If $l_{11}\geq 2$ then $d_{11}=1$, since $l_{11}$ must divide $1-d_{11}$, but $|1-d_{11}|<l_{11}$. Hence, we get the condition
$$d_{11}=\min(1,l_{11}-1).$$
Analogously, the conditions associated with the $X$-cones $\{v_{01},v_{1}\}$, $\{v_{21},v_{1}\}$, $\{v_{j1},v_{1}\}$ for $j=3,\dots,n$ are respectively
\begin{itemize}
\item $-l_{01}(u_1+u_2+\sum_{i=3}^n d_i u_i)-d_{01}=1, \hspace{5pt} u_{n+1}=1 \implies \dfrac{1+d_{01}}{l_{01}}\in\Z_{>0}$,
\item $l_{21}u_2+d_{21}u_{n+1}=1, \hspace{5pt} u_{n+1}=1 \implies d_{21}=\min(1,l_{21}-1)$,
\item $l_{j1}u_j+d_{j1}u_{n+1}=1-l_{j1}, \hspace{5pt} u_{n+1}=1  \implies d_{j1}=1$.
\end{itemize}

Then the condition associated with the $X$-cone $\{v_{01},\dots,v_{n1}\}$ becomes:
\begin{align*}
&u_1=\dfrac{1-d_{11}u_{n+1}}{l_{11}}\in\Z, \hspace{10pt} u_2=\dfrac{1-d_{21}u_{n+1}}{l_{21}}\in\Z,\\
&u_j=\dfrac{1-u_{n+1}}{l_{j1}}-1\in\Z, \hspace{10pt} -l_{01}\left(u_1+u_2+\sum_{j=3}^n d_j u_j\right)-d_{01}u_{n+1}=1,
\end{align*}
where $j=3,\dots,n$ and we used the fact that $d_{j1}=1$.
By substituting the first three identities in the fourth one and factoring out $u_{n+1}$ we get
$$
u_{n+1}\left(\dfrac{d_{01}}{l_{01}}-\dfrac{d_{11}}{l_{11}}-\dfrac{d_{21}}{l_{21}}-\sum_{j=3}^n\dfrac{d_j}{l_{j1}}\right)=\sum_{j=3}^n d_j-\dfrac{1}{l_{01}}-\dfrac{1}{l_{11}}-\dfrac{1}{l_{21}}-\sum_{j=3}^n\dfrac{d_j}{l_{j1}}.
$$
From Construction \ref{p2cons T1} with $d_{j1}=1$ for $j\geq 3$ we have that
$$
\dfrac{d_{01}}{l_{01}}-\dfrac{d_{11}}{l_{11}}-\dfrac{d_{21}}{l_{21}}-\sum_{j=3}^n\dfrac{d_j}{l_{j1}}>0,
$$
so we can divide by it on both sides and obtain
$$
u_{n+1}=\dfrac{\sum_{j=3}^n d_j-\left(\dfrac{1}{l_{01}}+\dfrac{1}{l_{11}}+\dfrac{1}{l_{21}}+\sum_{j=3}^n\dfrac{d_j}{l_{j1}}\right)}{\dfrac{d_{01}}{l_{01}}-\dfrac{d_{11}}{l_{11}}-\dfrac{d_{21}}{l_{21}}-\sum_{j=3}^n\dfrac{d_j}{l_{j1}}}\in\Z,
$$
Furthermore, from $l_{11}|1-d_{11}$ and $l_{11}|1-d_{11}u_{n+1}$ it follows that $l_{11}|d_{11}(1-u_{n+1})$. Since $\gcd(d_{11},l_{11})=1$, we have $l_{11}|1-u_{n+1}$. Analogously, $l_{j1}|1-u_{n+1}$ for $j\geq 1$. Hence, $X$ satisfies the condition associated with the $X$-cone $\{v_{01},\dots,v_{n1}\}$ if and only if
$$
1-u_{n+1}=\dfrac{\dfrac{1+d_{01}}{l_{01}}+\dfrac{1-d_{11}}{l_{11}}+\dfrac{1-d_{21}}{l_{21}}-\sum_{j=3}^n d_j}{\dfrac{d_{01}}{l_{01}}-\dfrac{d_{11}}{l_{11}}-\dfrac{d_{21}}{l_{21}}-\sum_{j=3}^n\dfrac{d_j}{l_{j1}}}\in \bigcap_{i=1}^n l_{i1}\Z=l\Z,
$$
where $l\coloneqq \lcm(l_{11},\dots,l_{n1})$.

Now, the conditions associated to the maximal cone $\{v_{01},v_{11},v_{31}\dots,v_{n1},v_{1}\}$ are
\begin{itemize}
\item $-l_{01}\left(u_1+u_2+\sum_{j=3}^nd_ju_j\right)-d_{01}=1$,
\item $l_{11}u_1+d_{11}u_{n+1}=1$,
\item $l_{j1}u_j+d_{j1}u_{n+1}=1-l_{31}$,
\item $u_{n+1}=1$,
\end{itemize}
which we rewrite as
\begin{align*}
&u_1=\dfrac{1-d_{11}}{l_{11}}\in\Z,\hspace{5pt} u_j=\dfrac{1-d_{j1}}{l_{j1}}-1\in\Z, \hspace{5pt} u_2=-u_1-\sum_{j=3}^n du_j+\dfrac{1+d_{01}}{l_{01}}\in\Z,\\
&\implies \dfrac{1-d_{11}}{l_{11}}\in\Z,\hspace{5pt} \dfrac{1-d_{j1}}{l_{j1}}-1\in\Z, \hspace{5pt} \dfrac{1+d_{01}}{l_{01}}\in\Z_{>0},
\end{align*}
and are therefore already implied by the necessary conditions above. The same goes for the other cones of Proposition \ref{maximal cones}. Hence the necessary conditions provided by the cones of Proposition \ref{X-cones} are also sufficient.
\end{proof}

\section{Proof of Theorem \ref{big theorem}}\label{proof of theorem}
We divide the proof of Theorem \ref{big theorem} in four steps: in the first three we determine the families of non-toric Gorenstein Fano threefolds arising from Construction \ref{p2cons T1} of Type A, B and C respectively, by means of Proposition \ref{fanoprop} and \ref{gorensteincor2}. In the last step, we prove that every non-toric, $\Q$-factorial, Gorenstein Fano threefold of Picard number
one with an effective $\K^*$-action and maximal orbit quotient
$\Pp_2$ belongs to one of these 154 families, and that members of different families are not isomorphic as $\K^*$-threefolds. First, we need the following 
\begin{remark} \label{unit fraction remark}
    Fix $a_1,\dots,a_n\in\Z_{>0}$ and $q\in \Q$. We find all positive integer solutions for $x_1,\dots,x_n$ in the equation
    $$
    \dfrac{a_1}{x_1}+\dots+\dfrac{a_n}{x_n}=q
    $$
    as follows. If $n=1$, then $x_1=\dfrac{a_1}{q}$ is the only solution if $\dfrac{a_1}{q}\in\Z_{>0}$; otherwise we have no solutions. If $n\geq 2$, assume first $x_1\geq\dots\geq x_n$. Then
    $$
    x_n\leq \dfrac{a_1+\dots+a_n}{q},
    $$
    and for every $b_n\in\Z_{>0}$ with $b_n\leq \dfrac{a_1+\dots+a_n}{q}$ we follow the procedure recursively for the equation
    $$
    \dfrac{a_1}{x_1}+\dots+\dfrac{a_{n-1}}{x_{n-1}}=q-\dfrac{a_n}{b_n},
    $$
    hence finding all positive integer solutions with $x_1\geq\dots\geq x_n$.
    Then, for every permutation $\pi\in S_n$ we follow the procedure again for the equation
    $$
    \dfrac{a_{\pi(1)}}{x_1}+\dots+\dfrac{a_{\pi(n)}}{x_n}=q,
    $$
    hence finding all positive integer solutions.
\end{remark}

\begin{proof}[Proof of Theorem \ref{big theorem}, Type A]
    By Construction \ref{p2cons T1}, Proposition \ref{fanoprop} and Proposition \ref{gorensteincor2} we have
    \begin{align*}
    \alpha_N\coloneqq &\dfrac{1+d_{01}}{l_{01}}+\dfrac{1-d_{11}}{l_{11}}+\dfrac{1-d_{21}}{l_{21}}-\sum_{j=3}^n d_j\in\Z_{>0};\\
    \alpha_D\coloneqq &\dfrac{d_{01}}{l_{01}}-\dfrac{d_{11}}{l_{11}}-\dfrac{d_{21}}{l_{21}}-\sum_{j=3}^n\dfrac{d_j}{l_{j1}}=\dfrac{1+d_{01}}{l_{01}}-\dfrac{1}{l_{01}}-\dfrac{d_{11}}{l_{11}}-\dfrac{d_{21}}{l_{21}}-\sum_{j=3}^n\dfrac{d_j}{l_{j1}}>0.
    \end{align*}
    Furthermore, $\alpha_N\geq l\alpha_D$, where $l\coloneqq \lcm(l_{11},\dots,l_{n1})$.
    In particular
    \begin{align*}
        0&\leq\alpha_N-2\alpha_D=-\dfrac{1+d_{01}}{l_{01}}+\dfrac{d_{11}}{l_{11}}+\dfrac{d_{21}}{l_{21}}+\dfrac{2}{l_{01}}+\dfrac{1}{l_{11}}+\dfrac{1}{l_{21}}-\sum_{j=3}^n \dfrac{(l_{j1}-2)d_j}{l_{j1}}\\
        &\leq \dfrac{2}{l_{01}}+\dfrac{2}{l_{11}}+\dfrac{2}{l_{21}}-\dfrac{1+d_{01}}{l_{01}}\implies\dfrac{1+d_{01}}{l_{01}}\leq\dfrac{2}{l_{01}}+\dfrac{2}{l_{11}}+\dfrac{2}{l_{21}}\leq6.
    \end{align*}
    Since $\alpha_N\geq 1$ we have
    $$
    6\geq\dfrac{1+d_{01}}{l_{01}}=\alpha_N+\sum_{j=3}^n d_j-\dfrac{1-d_{11}}{l_{11}}-\dfrac{1-d_{21}}{l_{21}}\geq\sum_{j=3}^nd_j-1,
    $$
    hence $\sum_{j=3}^n d_j\leq 7$. In particular, $n\leq 9$.
    Finally, by rearranging $\alpha\in\Z_{>0}$ we obtain the equation
    \begin{align*}
    & \dfrac{1}{l_{01}}+\dfrac{1}{l_{11}}+\dfrac{1}{l_{21}}+\sum_{j=3}^n\dfrac{d_j}{l_{j1}}+\dfrac{\dfrac{d_{01}+1}{l_{01}}-\dfrac{d_{11}-1}{l_{11}}-\dfrac{d_{21}-1}{l_{21}}-\sum_{j=3}^n d_j}{\alpha}=\\
    &\dfrac{d_{01}+1}{l_{01}}-\dfrac{d_{11}-1}{l_{11}}-\dfrac{d_{21}-1}{l_{21}}.
    \end{align*}
    For each possible value of $\dfrac{d_{01}+1}{l_{01}}, \dfrac{d_{11}-1}{l_{11}}, \dfrac{d_{21}-1}{l_{21}}$, $n$ and $\sum_{j=3}^n d_j$ we compute the solutions of the above equation via Remark \ref{unit fraction remark}, and we check which of those satisfy the conditions of Proposition \ref{fanoprop} and \ref{gorensteincor2}. This procedure identifies 50 families of Gorenstein Fano threefolds of Type A arising form Construction \ref{p2cons T1}.
\end{proof}

\begin{lem} \label{cases lemma}
    Let $X=X(g,P)$ be a non-toric Gorenstein Fano threefold of Type B arising form Construction \ref{p2cons T1}. Then
    $$
    \dfrac{1}{l_{11}}+\dfrac{1}{l_{21}}+\sum_{i=3}^n\dfrac{d_i}{l_{i1}}-\sum_{i=3}^n d_i>0.
    $$
    In particular, either $n=3$ and $d_3\leq 3$ or $n=4$ and $d_3=d_4=1$.
\end{lem}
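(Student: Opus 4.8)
The plan is to squeeze the inequality out of the Fano criterion (Proposition~\ref{fanoprop}, Type~B) by playing it against the integrality built into the Gorenstein conditions (Proposition~\ref{gorensteincor2}, Type~B); the bound on $n$ and the $d_i$ then follows from the normalizations of Remarks~\ref{non-toricity} and~\ref{non-redundancy}, with the most degenerate case handled by the geometry of the critical curves rather than by an estimate. Throughout I work with a representative in normal form (which we may by those two remarks), so that $l_{i1}\ge 2$ for $i\ge 3$ and $l_{11}\le l_{21}$.

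Set $\Delta:=l_{01}d_{02}-l_{02}d_{01}$, let $\nu,\mu,\lambda_i$ be as in Proposition~\ref{gorensteincor2}~(B), and write $\sigma:=\sum_{i=1}^n\tfrac{d_id_{i1}}{l_{i1}}$ (recall $d_1=d_2=1$, so that Construction~\ref{cons B} forces $\sigma>\tfrac{d_{01}}{l_{01}}$). First I would record the purely algebraic identity $l_{01}\nu-d_{01}\mu=1$, immediate from the definitions of $\nu,\mu,\Delta$; rewriting it as $\mu\,\tfrac{d_{01}}{l_{01}}=\nu-\tfrac1{l_{01}}$ and using $\mu>0$ gives the \emph{strict} inequality $\mu\sigma>\nu-\tfrac1{l_{01}}$. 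Next, using that each $\lambda_i=\tfrac{\mu d_{i1}-1}{l_{i1}}$ is an integer, I expand
\[
\mu\sigma\;=\;\sum_{i=1}^n d_i\Bigl(\lambda_i+\tfrac1{l_{i1}}\Bigr)\;=\;M+Q,
\qquad
M:=\lambda_1+\lambda_2+\sum_{i=3}^n d_i(\lambda_i+1)\in\Z,
\]
where $Q:=\tfrac1{l_{11}}+\tfrac1{l_{21}}+\sum_{i=3}^n\tfrac{d_i}{l_{i1}}-\sum_{i=3}^n d_i$ is exactly the quantity in the statement. Substituting $\mu\sigma=M+Q$ into the Type~B Fano inequality of Proposition~\ref{fanoprop} (which reads $\nu-\mu\sigma+Q>0$), the two copies of $Q$ cancel and the Fano condition collapses to $\nu>M$; as $\nu,M\in\Z$ this means $\nu-M\ge 1$. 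Hence $Q=\mu\sigma-M>(\nu-\tfrac1{l_{01}})-M=(\nu-M)-\tfrac1{l_{01}}\ge 1-\tfrac1{l_{01}}\ge 0$, and since the first step was strict, $Q>0$.

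For the ``in particular'' part I would feed $Q>0$ together with $l_{i1}\ge 2$ ($i\ge 3$) into $\tfrac1{l_{11}}+\tfrac1{l_{21}}>\sum_{i=3}^n d_i\bigl(1-\tfrac1{l_{i1}}\bigr)\ge\tfrac12\sum_{i=3}^n d_i$, and split on whether $l_{11}=l_{21}=1$. If not, then $l_{21}\ge 2$ (using $l_{11}\le l_{21}$), so $\tfrac1{l_{11}}+\tfrac1{l_{21}}\le\tfrac32$ and therefore $\sum_{i=3}^n d_i\le 2$; this forces $n\le 4$, and $n=4$ forces $d_3=d_4=1$. If $l_{11}=l_{21}=1$, I would first show that no $i\ge 3$ has $d_i=1$: a degree-one $g_i=\alpha_0T_0+\alpha_1T_1+\alpha_2T_2$ cuts out the critical curve $C_i=V(g_i)\subset\Pp_2$, distinct from $C_0=V(T_0),C_1=V(T_1),C_2=V(T_2)$ since $C$ is minimal, so at least two of the $\alpha_j$ are nonzero; but Remark~\ref{non-toricity}~(B) forces $\alpha_1=\alpha_2=0$ once $l_{11}=l_{21}=1$, a contradiction. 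Thus $d_i\ge 2$ for all $i\ge 3$, whence $2(n-2)\le\sum_{i=3}^n d_i\le 3$ (again from $Q>0$ and $l_{i1}\ge 2$), forcing $n=3$ and $d_3\le 3$.

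I expect the crux to be the bookkeeping of the middle paragraph: spotting the identity $l_{01}\nu=d_{01}\mu+1$ and then realizing that, after setting $M:=\mu\sigma-Q$, the Gorenstein integrality of the $\lambda_i$ makes $M$ an integer and the Fano condition reduces to $\nu>M$; it is precisely the resulting gain of a full unit, $\nu-M\ge 1$, that upgrades the harmless estimate $\mu\sigma>\nu-\tfrac1{l_{01}}$ into $Q>1-\tfrac1{l_{01}}\ge 0$. The secondary subtlety is the case $l_{11}=l_{21}=1$, which cannot be closed by inequalities alone — the numerical estimates would still permit $n=5$ with $d_3=d_4=d_5=1$ or $n=4$ with $(d_3,d_4)=(1,2)$ — and instead requires the general position of $C_0,C_1,C_2$ together with the non-toricity normalization.
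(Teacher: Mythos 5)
Your proof is correct and follows essentially the same route as the paper: combine the Gorenstein integrality of $\nu,\mu,\lambda_i$ with the identity $l_{01}\nu-d_{01}\mu=1$ so that the Type~B Fano inequality becomes an inequality between integers ($\nu>M$), gaining a full unit and yielding the positivity of $Q$, and then use $l_{i1}\ge 2$ for $i\ge 3$ together with Remarks~\ref{non-toricity} and~\ref{non-redundancy} to bound $n$ and the $d_i$. The only difference is organizational: you split on whether $l_{11}=l_{21}=1$ and spell out, via distinctness of the critical curve $C_i$ from the coordinate lines, why a linear $g_i$ must involve $T_1$ or $T_2$ — a point the paper leaves implicit when it asserts $l_{21}\ge 2$ from Remark~\ref{non-toricity}.
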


\begin{proof}
    We use the notation of Proposition \ref{gorensteincor2}. By Proposition \ref{fanoprop} and since $\lambda_i\coloneqq\dfrac{\mu d_{i1}-1}{{l_{i1}}}\in\Z$, $\nu=\dfrac{\mu d_{01}+1}{l_{01}}\in\Z_{>0}$ we have
    \begin{align*}
        &\nu-\lambda_1-\lambda_2-\sum_{i=3}^n d_i\lambda_i>\sum_{i=3}^n d_i\implies \nu-\lambda_1-\lambda_2-\sum_{i=3}^n d_i\lambda_i\geq1+\sum_{i=3}^n d_i \implies \\
        & \dfrac{1}{l_{11}}+\dfrac{1}{l_{21}}+\sum_{i=3}^n\dfrac{d_i}{l_{i1}}-\sum_{i=3}^n d_i>1-\dfrac{1}{l_{01}}+\mu\left(\dfrac{d_{11}}{l_ {11}}+\dfrac{d_{21}}{l_{21}}+\sum_{i=3}^n\dfrac{d_id_{i1}}{l_{i1}}-\dfrac{d_{01}}{l_{01}}\right)>0.
    \end{align*}
   
    Since $l_{i1}\geq 2$ for $i\geq 3$, we have
    $$
    \sum_{i=3}^n d_i<\sum_{i=1}^n\dfrac{d_i}{l_{i1}}\leq 2+\dfrac{\sum_{i=3}^n d_i}{2}\implies \sum_{i=3}^n d_i\leq 3.
    $$
    If $n\geq 4$, then $d_i=1$ for some $i\in\{3,\dots,n\}$. By Remark \ref{non-toricity}, it follows that $l_{21}\geq 2$, which implies $\sum_{i=3}^n d_i\leq 2$, and hence $n=4$, $d_3=d_4=1$.
\end{proof}

\begin{corollary}
    Let $X=X(g,P)$ be a non-toric Gorenstein Fano threefold of Type B arising form Construction \ref{p2cons T1}. With the notation of Proposition \ref{gorensteincor2}, we have $\alpha_1\in\Z_{>0}$ and $\alpha_2\in\Z_{<0}$.
\end{corollary}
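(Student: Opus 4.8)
The plan is to exploit that Proposition~\ref{gorensteincor2} already delivers the integrality $\alpha_1,\alpha_2\in\Z$ as part of the Gorenstein hypothesis, so the whole task reduces to determining the two signs; and since each $\alpha_j$ is displayed as a quotient, I would argue about numerator and denominator independently.

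For the numerators, I would observe that the numerator of $\alpha_j$ (for $j=1,2$) is
\[
\frac{1}{l_{0j}}+\Bigl(\frac{1}{l_{11}}+\frac{1}{l_{21}}+\sum_{i=3}^{n}\frac{d_i}{l_{i1}}-\sum_{i=3}^{n}d_i\Bigr),
\]
where the bracketed quantity is exactly the one shown to be strictly positive in Lemma~\ref{cases lemma}. Adding the strictly positive term $1/l_{0j}$ keeps it positive, so both numerators are $>0$.

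For the denominators, I would unwind the evident cancellation with $d_{0j}/l_{0j}$ and compare with the defining chain of strict inequalities of Construction~\ref{cons B},
\[
\frac{d_{02}}{l_{02}}\;>\;\frac{d_{11}}{l_{11}}+\frac{d_{21}}{l_{21}}+\sum_{i=3}^{n}d_i\frac{d_{i1}}{l_{i1}}\;>\;\frac{d_{01}}{l_{01}}.
\]
The right-hand inequality says the denominator of $\alpha_1$ is positive, and the left-hand inequality says the denominator of $\alpha_2$ is negative. (Non-vanishing of these denominators is automatic, being built into the Gorenstein characterization of Proposition~\ref{gorensteincor2}, so the strict inequalities are genuinely what is available.) Hence $\alpha_1$ is a ratio of two positive rationals and $\alpha_2$ a ratio of a positive over a negative rational; combined with $\alpha_1,\alpha_2\in\Z$ this yields $\alpha_1\in\Z_{>0}$ and $\alpha_2\in\Z_{<0}$.

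The only delicate point is the bookkeeping in the second step: one has to match the denominator written in Proposition~\ref{gorensteincor2} against the middle term of the Construction~\ref{cons B} chain of inequalities, after which it is a one-line sign check. I do not anticipate a genuine obstacle here; the real content of the corollary lies in Lemma~\ref{cases lemma}, which is what forces the numerators to be positive.
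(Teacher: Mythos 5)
Your proposal is correct and is essentially the paper's own argument: integrality of $\alpha_1,\alpha_2$ comes from Proposition \ref{gorensteincor2}, positivity of both numerators from Lemma \ref{cases lemma} (plus the extra $1/l_{0j}>0$), and the opposite signs of the two denominators from the defining chain of inequalities in Construction \ref{cons B}. No further comment is needed.
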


\begin{proof}
    By Construction \ref{p2cons T1}, the denominator of $\alpha_1$ is strictly positive, and the denominator of $\alpha_2$ is strictly negative. By Lemma \ref{cases lemma}, the numerator of both $\alpha_1$ and $\alpha_2$ is strictly positive, and the thesis follows.
\end{proof}

\begin{lem} \label{cases lemma 2}
    Let $X=X(g,P)$ be a non-toric Gorenstein Fano threefold of Type B arising form Construction \ref{p2cons T1}. With the notation of Proposition \ref{gorensteincor2}, we have
    \begin{itemize}
    \item $\nu-\lambda_1-\lambda_2-\sum_{i=3}^n\lambda_i=1+\sum_{i=1}^n d_i$;
    \item $\dfrac{1}{l_{01}}>1+\sum_{i=1}^n d_i-\dfrac{1}{l_{11}}-\dfrac{1}{l_{21}}-\sum_{i=3}^n\dfrac{d_i}{l_{i1}}>\dfrac{1}{l_{02}}.$
    \end{itemize}
    In particular, we have the following cases:
    \begin{enumerate}
        \item $n=3,\ d_3=1,\ l_{11}=1,\ l_{01}=1,\ 2\leq l_{21},\ 2\leq l_{31}$;
        \item $n=3,\ d_3=1,\ l_{11}=1,\ l_{01}=2,\ l_{21}=2,\ 2\leq l_{31}$;
        \item $n=3,\ d_3=1,\ l_{11}=1,\ l_{01}=2,\ l_{21}=3,\ 2\leq l_{31}\leq 5$;
        \item $n=3,\ d_3=1,\ l_{11}=1,\ 3\leq l_{01}\leq 5,\ l_{21}=2,\ 2\leq l_{31}\leq 5$;
        \item $n=3,\ d_3=1,\ l_{11}=2,\ l_{01}=1,\ l_{21}=2,\ 2\leq l_{31}$;
        \item $n=3,\ d_3=1,\ l_{11}=2,\ l_{01}=1,\ l_{21}=3,\ 2\leq l_{31}\leq 5$;
        \item $n=3,\ d_3=2,\ l_{11}=1,\ l_{01}=1,\ l_{21}=1,\ 3\leq l_{31}$;
        \item $n=3,\ d_3=2,\ l_{11}=1,\ l_{01}=2,\ l_{21}=1,\ l_{31}=3$;
        \item $n=3,\ d_3=2,\ l_{11}=1,\ l_{01}=1,\ 2\leq l_{21},\ l_{31}=2$;
        \item $n=3,\ d_3=2,\ l_{11}=1,\ l_{01}=1,\ l_{21}=2,\ l_{31}=3$;
        \item $n=3,\ d_3=3,\ l_{11}=1,\ l_{01}=1,\ l_{21}=1,\ l_{31}=2$;
        \item $n=4,\ d_3=d_4=1,\ l_{11}=1,\ l_{01}=1,\ l_{21}=1,\ 2\leq l_{31},\ 2\leq l_{41}$.
    \end{enumerate}
\end{lem}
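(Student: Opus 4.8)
The plan is to extract the two displayed relations from the Fano inequality of Proposition~\ref{fanoprop}(B) and the Gorenstein integrality conditions of Proposition~\ref{gorensteincor2}(B), and then to carry out a finite case distinction using Lemma~\ref{cases lemma} and the normalizations of Remark~\ref{non-redundancy}.

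First I would record the identities hidden in the definitions of $\nu$ and $\mu$: a direct computation gives $l_{01}\nu=\mu d_{01}+1$ and $l_{02}\nu=\mu d_{02}+1$, that is $\mu d_{0j}/l_{0j}=\nu-1/l_{0j}$ for $j=1,2$; and by definition of $\lambda_i$ one has $\mu d_{i1}/l_{i1}=\lambda_i+1/l_{i1}$ for $i=1,\dots,n$. Substituting these into the Fano inequality of Proposition~\ref{fanoprop}(B) cancels all the $1/l_{i1}$ contributions and collapses it to the assertion that the integer $I\coloneqq\nu-\lambda_1-\lambda_2-\sum_{i\ge 3}\lambda_i$ is strictly larger than $\sum_i d_i$, hence $I\ge 1+\sum_i d_i$. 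Feeding the same substitutions into the chain $d_{02}/l_{02}>\dots>d_{01}/l_{01}$ of Construction~\ref{cons B}, after multiplying through by $\mu>0$, yields $I<1/l_{01}+1/l_{11}+1/l_{21}+\sum_{i\ge 3}d_i/l_{i1}$ together with a matching lower bound involving $1/l_{02}$. Since $l_{i1}\ge 2$ for $i\ge 3$ by Remark~\ref{non-toricity}, the right-hand side is at most $3+\frac{1}{2}\sum_i d_i$, so $I=1+\sum_i d_i$ whenever $\sum_i d_i\ge 2$; the residual cases with $\sum_i d_i=1$ get pinned down inside the enumeration below. This is the first displayed identity, and reinserting $I=1+\sum_i d_i$ into the transformed Construction chain is precisely the second displayed inequality.

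For the enumeration, the middle term of the second displayed inequality is positive and at most $1/l_{01}\le 1$; its positivity is the inequality of Lemma~\ref{cases lemma}, so we may restrict to the finitely many $(n,d_3,\dots,d_n)$ of that lemma, namely $n=3$ with $d_3\in\{1,2,3\}$ and $n=4$ with $d_3=d_4=1$. For each, the strict upper bound $1+\sum_i d_i-1/l_{11}-1/l_{21}-\sum_{i\ge 3}d_i/l_{i1}<1/l_{01}\le 1$, together with $l_{21}\ge l_{11}$ and $l_{i1}\ge 2$ for $i\ge 3$, forces $l_{11}$ to be very small, then bounds $l_{21}$, then $l_{01}$, and --- when $d_3\ge 2$ --- also $l_{31}$, from above; conversely the strict lower bound $>1/l_{02}$, via $l_{02}\ge l_{01}$, keeps $l_{01}$ from being large. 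Using $l_{i1}>d_{i1}\ge 0$ to bound the $d_{i1}$, a routine walk through the finitely many surviving shapes of $(l_{01},l_{02},l_{11},l_{21},l_{31},\dots)$ for each $(n,d_3,\dots)$ yields exactly the twelve cases (1)--(12).

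The main obstacle is this last bookkeeping. Several of the listed cases --- for instance (1), (2), (5), (7), (9), (12) --- leave one of $l_{21}$, $l_{31}$, $l_{41}$ unbounded, so one is classifying finitely many shapes, not finitely many tuples; one must keep careful track of strict versus non-strict inequalities, and must invoke the $d_i=1$ clauses of Remark~\ref{non-toricity}, which are exactly what permit $l_{21}=1$, respectively $l_{11}=1$, in the cases where those occur. I would also stress that the remaining divisibility conditions on $\alpha_1,\alpha_2,\beta_i,\gamma_i$ from Proposition~\ref{gorensteincor2}(B) are deliberately not used here; they are applied in the subsequent step of the proof of Theorem~\ref{big theorem}, which reduces these twelve shapes to the actual list of Type~B families.
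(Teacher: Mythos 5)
Your route coincides with the paper's: the same identities $l_{0j}\nu-\mu d_{0j}=1$ and $l_{i1}\lambda_i=\mu d_{i1}-1$ are substituted into the Fano condition (equivalently Lemma \ref{cases lemma}) for the lower bound and into the defining chain of Construction \ref{cons B} scaled by $\mu>0$ for the two-sided estimate, after which integrality of $\nu-\lambda_1-\lambda_2-\sum\lambda_i$ pins down the first identity and the second display, and the twelve shapes follow by the same finite walk using Remarks \ref{non-toricity} and \ref{non-redundancy}. The only soft spot — deferring the $\sum_{i\ge 3}d_i=1$ corner (where $l_{11}=l_{21}=1$ is not yet excluded by the displayed bounds) and leaving the case walk as routine — is treated no more explicitly in the paper's own proof, so the proposal matches it in both substance and level of detail.
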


\begin{proof}
    We have
    \begin{align*}
        &\dfrac{\mu d_{02}}{l_{02}}>\dfrac{\mu d_{11}}{l_{11}}+\dfrac{\mu d_{21}}{l_{21}}+\sum_{i=3}^n d_i\dfrac{\mu d_{i1}}{l_{i1}}>\dfrac{\mu d_{01}}{l_{01}}\implies
        \\
        &\nu-\dfrac{1}{l_{02}}>\lambda_1+\lambda_2+\sum_{i=3}^n\lambda_i+\dfrac{1}{l_{11}}+\dfrac{1}{l_{21}}+\sum_{i=3}^n\dfrac{d_i}{l_{i1}}>\nu-\dfrac{1}{l_{01}}\implies
        \\
        &1\geq\dfrac{1}{l_{01}}>\nu-\lambda_1-\lambda_2-\sum_{i=3}^n\lambda_i-\dfrac{1}{l_{11}}-\dfrac{1}{l_{21}}-\sum_{i=3}^n\dfrac{d_i}{l_{i1}}>\dfrac{1}{l_{02}}>0.
    \end{align*}
    By Lemma \ref{cases lemma} it follows
    $$
    1+\dfrac{1}{l_{11}}+\dfrac{1}{l_{21}}+\sum_{i=3}^n\dfrac{d_i}{2}>\dfrac{1}{l_{11}}+\dfrac{1}{l_{21}}+\sum_{i=3}^n\dfrac{d_i}{l_{i1}}>\sum_{i=3}^n d_i,
    $$
    and hence $\nu-\lambda_1-\lambda_2-\sum_{i=3}^n\lambda_i=1+\sum_{i=3}^n d_i.$
\end{proof}

\begin{lem} \label{unit fraction T2}
    Let $X=X(g,P)$ be a non-toric Gorenstein Fano threefold of Type B arising form Construction \ref{p2cons T1}. With the notation of Proposition \ref{gorensteincor2}, we have that
    {\footnotesize
        $$
        1+\sum_{i=3}^n d_i=\dfrac{1}{l_{02}}+\dfrac{1}{l_{11}}+\dfrac{1}{l_{21}}+\sum_{i=3}^n\dfrac{d_i}{l_{i1}}+l_{01}\dfrac{\dfrac{1}{l_{01}}+\dfrac{1}{l_{11}}+\dfrac{1}{l_{21}}+\sum_{i=3}^n\dfrac{d_i}{l_{i1}}-1-\sum_{i=3}^n d_i}{l_{01}\left(\gamma_1+\gamma_2+\sum_{i=3}^n d_i(\gamma_i-1)-1\right)+1+d_{01}\alpha_2}.
        $$
    }
\end{lem}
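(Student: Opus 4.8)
The plan is to derive the displayed identity purely algebraically from the Gorenstein data in Proposition \ref{gorensteincor2}, Type B, by treating the defining equalities for $\nu$, $\mu$, $\alpha_1$, $\alpha_2$, $\lambda_i$, $\beta_i$, $\gamma_i$ as linear relations among the quantities $1/l_{ij}$, $d_{ij}/l_{ij}$ and then eliminating everything except the left-hand side $1+\sum_{i=3}^n d_i$ and the $l_{ij}$ themselves. Concretely, I would start from the case equality of Lemma \ref{cases lemma 2}, namely
\[
\nu-\lambda_1-\lambda_2-\sum_{i=3}^n\lambda_i \ = \ 1+\sum_{i=3}^n d_i,
\]
and substitute the definitions $\nu=(\mu d_{01}+1)/l_{01}$ (using $\lambda_0$-type bookkeeping as in the proof of Lemma \ref{cases lemma}) and $\lambda_i=(\mu d_{i1}-1)/l_{i1}$. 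This already expresses $1+\sum d_i$ in terms of $\mu$ and the $d_{i1}/l_{i1}$, $1/l_{i1}$; it is essentially the second bullet of Lemma \ref{cases lemma 2} made into an equality.

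Next I would bring in $\alpha_2$. From its definition in Proposition \ref{gorensteincor2}, the numerator of $\alpha_2$ is $1/l_{02}+1/l_{11}+1/l_{21}+\sum_{i=3}^n d_i/l_{i1}-\sum_{i=3}^n d_i$ and its denominator is $d_{01}/l_{01}+d_{11}/l_{11}+d_{21}/l_{21}+\sum_{i\geq 4} d_id_{i1}/l_{i1}-d_{02}/l_{02}$, so cross-multiplying gives one linear relation tying $\alpha_2$ to the $d_{ij}/l_{ij}$. The $\gamma_i=(\alpha_2 d_{i1}-1)/l_{i1}\in\Z$ relations then let me rewrite $\alpha_2 d_{i1}/l_{i1}=\gamma_i+1/l_{i1}$, and hence $\alpha_2\bigl(\sum \text{terms}\bigr)$ collapses to $\sum(\gamma_i+1/l_{i1})$-type expressions plus a leftover $d_{01}\alpha_2/l_{01}$ term. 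Matching this against the $\alpha_2$-denominator identity, and substituting the expression for $\mu$ (or eliminating $\mu$ via $\mu=(l_{02}-l_{01})/(l_{01}d_{02}-l_{02}d_{01})$ together with $\nu$), I expect the quantity $l_{01}\bigl(\gamma_1+\gamma_2+\sum_{i\geq 3} d_i(\gamma_i-1)-1\bigr)+1+d_{01}\alpha_2$ to emerge exactly as the denominator of the correction term in the claimed formula, with numerator $l_{01}\bigl(1/l_{01}+1/l_{11}+1/l_{21}+\sum d_i/l_{i1}-1-\sum d_i\bigr)$.

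So the skeleton is: (1) write the Lemma \ref{cases lemma 2} equality with all symbols expanded; (2) solve it for $\mu$ in terms of the $l$'s and $d$'s; (3) substitute into the $\alpha_2$ cross-multiplied identity, using the $\gamma_i$ congruences to replace $\alpha_2 d_{i1}/l_{i1}$; (4) collect terms so that $1+\sum_{i=3}^n d_i$ is isolated, recognizing the resulting rational expression as the stated one; (5) remark that the denominator is nonzero — it equals $l_{01}$ times the (negative) denominator of $\alpha_1$ or is otherwise controlled by the strict inequalities of Lemma \ref{cases lemma 2}, so the division is legitimate. Steps (1) and (4) are pure bookkeeping; the genuine content is step (3).

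The main obstacle will be the bookkeeping in step (3): keeping track of the $\lambda_i$, $\beta_i$, $\gamma_i$ integrality substitutions simultaneously without sign errors, and making sure the "$-1$" offsets in $\gamma_i-1$ and the "$+d_{01}\alpha_2$" remainder land with the correct coefficients. A secondary subtlety is the treatment of $i=3$ versus $i\geq 4$: in Proposition \ref{gorensteincor2} the exponent $l_{31}$ appears with $d_3$ in the numerator of $\alpha_j$ but $d_{31}$ is governed by the same $\lambda_i$, $\beta_i$, $\gamma_i$ relations as the others, so I would unify the index range from the start (writing $\sum_{i=3}^n$ throughout) and only at the end check that this matches the asymmetric-looking statement. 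Once the algebra is organized so that every occurrence of $\alpha_2$, $\mu$, $\nu$ has been eliminated in favour of the $l$'s, $d$'s and the integers $\gamma_i$, the identity should fall out by inspection.
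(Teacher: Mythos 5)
Your plan is exactly the paper's route: the paper first manipulates $\alpha_2$ (multiplying numerator and denominator by $\mu$ and using the $\nu,\lambda_i$ relations, i.e.\ the equality of Lemma \ref{cases lemma 2}) to obtain $1+\sum_{i=3}^n d_i=\tfrac{1}{l_{02}}+\tfrac{1}{l_{11}}+\tfrac{1}{l_{21}}+\sum_{i=3}^n\tfrac{d_i}{l_{i1}}+\tfrac{\mu}{\mu-\alpha_2}$, and then writes $\mu$ and $\alpha_2$ over the common denominator $\tfrac{d_{11}}{l_{11}}+\tfrac{d_{21}}{l_{21}}+\sum_{i=3}^n\tfrac{d_id_{i1}}{l_{i1}}-\tfrac{d_{01}}{l_{01}}$ --- $\mu$ via the Lemma \ref{cases lemma 2} equality and $\alpha_2$ via the $\gamma_i$ substitutions with the leftover $(d_{01}\alpha_2-1)/l_{01}$ term --- before substituting into $\mu/(\mu-\alpha_2)$, which is your steps (1)--(4) in a slightly different order. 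So the proposal is correct in substance and essentially coincides with the paper's argument.
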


\begin{proof}
    By multiplying the numerator and denominator of $\alpha_2$ by $\mu$ and performing the same operations of Lemma \ref{cases lemma 2} we obtain
    \begin{align*}
        &\alpha_2=\mu\dfrac{\dfrac{1}{l_{02}}+\dfrac{1}{l_{11}}+\dfrac{1}{l_{21}}+\sum_{i=3}^n\dfrac{d_i}{l_{i1}}-\sum_{i=3}^n d_i}{\dfrac{1}{l_{02}}+\dfrac{1}{l_{11}}+\dfrac{1}{l_{21}}+\sum_{i=3}^n\dfrac{d_i}{l_{i1}}-1-\sum_{i=3}^n d_i}\implies\\
        & (\mu-\alpha_2)\left(\dfrac{1}{l_{02}}+\dfrac{1}{l_{11}}+\dfrac{1}{l_{21}}+\sum_{i=3}^n\dfrac{d_i}{l_{i1}}\right)=(\mu-\alpha_2)\left(\sum_{i=3}^n d_i+1\right)-\mu,
    \end{align*}
    and hence
    $$
    1+\sum_{i=3}^n d_i=\dfrac{1}{l_{02}}+\dfrac{1}{l_{11}}+\dfrac{1}{l_{21}}+\sum_{i=3}^n\dfrac{d_i}{l_{i1}}+\dfrac{\mu}{\mu-\alpha_2}.
    $$
    Furthermore, by Lemma \ref{cases lemma 2} we have
    \begin{align*}
        &\mu\left(\dfrac{d_{11}}{l_{11}}+\dfrac{d_{21}}{l_{21}}+\sum_{i=3}^n\dfrac{d_id_{i1}}{l_{i1}}-\dfrac{d_{01}}{l_{01}}\right)=\dfrac{1}{l_{01}}+\dfrac{1}{l_{11}}+\dfrac{1}{l_{21}}+\sum_{i=3}^n\dfrac{d_i}{l_{i1}}-1-\sum_{i=3}^n d_i\\
        & \implies \mu=\dfrac{\dfrac{1}{l_{01}}+\dfrac{1}{l_{11}}+\dfrac{1}{l_{21}}+\sum_{i=3}^n\dfrac{d_i}{l_{i1}}-1-\sum_{i=3}^n d_i}{\dfrac{d_{11}}{l_{11}}+\dfrac{d_{21}}{l_{21}}+\sum_{i=3}^n\dfrac{d_id_{i1}}{l_{i1}}-\dfrac{d_{01}}{l_{01}}}.
    \end{align*}
    Analogously,
    $$
    \alpha_2=\dfrac{\dfrac{1}{l_{01}}+\dfrac{1}{l_{11}}+\dfrac{1}{l_{21}}+\sum_{i=3}^n\dfrac{d_i}{l_{i1}}-\gamma_1-\gamma_2-\sum_{i=3}^n d_i\gamma_i-\dfrac{d_{01}\alpha_2-1}{l_{01}}}{\dfrac{d_{11}}{l_{11}}+\dfrac{d_{21}}{l_{21}}+\sum_{i=3}^n\dfrac{d_id_{i1}}{l_{i1}}-\dfrac{d_{01}}{l_{01}}}.
    $$
    By substituting both in $\dfrac{\mu}{\mu-\alpha_2}$ we get the thesis.
\end{proof}

\begin{proof}[Proof of Theorem \ref{big theorem}, Type B]
    First of all, notice that
    $$
    \mu=\dfrac{l_{02}-l_{01}}{l_{01}d_{02}-l_{02}d_{01}}=\dfrac{1}{\dfrac{d_{02}}{l_{02}}-\dfrac{d_{01}}{l_{01}}}\in\Z_{>0}\implies d_{02}\leq l_{02}\left(1+\dfrac{d_{01}}{l_{01}}\right)\leq 2l_{02}.
    $$
    Consider Case 1 of Lemma \ref{cases lemma 2}, that is $n=3,d_3=1,l_{01}=l_{11}=1.$ The condition of Lemma \ref{unit fraction T2} becomes
    $$
    1=\dfrac{1}{l_{02}}+\dfrac{1}{l_{21}}+\dfrac{1}{l_{31}}+\dfrac{\dfrac{1}{l_{21}}+\dfrac{1}{l_{31}}}{\gamma_2+\gamma_3+d_{01}\alpha_2}.
    $$
    Via Remark \ref{unit fraction remark}, we compute the triples of solutions $(l_{02},l_{21},l_{31})$. Since $d_{02}\leq 2l_{02}$, $d_{21}<l_{21}$ and $d_{31}<l_{31}$, we get finitely many possible values for the $l_{ij}$, $d_{ij}$, $d_3$. Finally, we check which combinations of those satisfy the conditions of Proposition \ref{fanoprop} and \ref{gorensteincor2}. The other cases of Lemma \ref{cases lemma 2} are handled analogously. This procedure identifies 69 families of non-toric Gorenstein Fano threefolds of Type B arising form Construction \ref{p2cons T1}.
\end{proof}

\begin{lem}
    There are 8 distinct families of non-toric Gorenstein Fano threefolds of Type C arising form Construction \ref{p2cons T1} with $l_{31}=l_{32}$.  
\end{lem}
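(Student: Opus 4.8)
The key point is that Proposition~\ref{gorensteincor2}(C) cannot be invoked here: its quantity $\mu=(l_{32}-l_{31})/(l_{31}d_{32}-l_{32}d_{31})$ vanishes as soon as $l_{31}=l_{32}$, so the requirement $\mu\in\Z_{>0}$ is never met and the Gorenstein condition must be extracted directly from Corollary~\ref{gorensteincor}. The plan is to first show that the hypothesis $l_{31}=l_{32}$ is extremely rigid, and then to redo the enumeration of Proposition~\ref{gorensteincor2} by hand in the single surviving regime.

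First I would prove that $l_{31}=l_{32}$ forces $l_{01}=l_{11}=l_{21}=l_{31}=l_{32}=1$ and $d_{31}=0$. Since the columns $v_{31}=[l_{31}u_3,d_{31}]$ and $v_{32}=[l_{32}u_3,d_{32}]$ of a generator matrix are distinct (Reminder~\ref{fwps rem}), putting $l\coloneqq l_{31}=l_{32}$ forces $d_{31}\neq d_{32}$. By Proposition~\ref{X-cones}(C) the set $\{v_{01},v_{31},v_{32}\}$ is an $X$-cone, hence lies in a maximal $X$-cone, so if $X$ is Gorenstein then Corollary~\ref{gorensteincor}(C) supplies $u=(a_1,\dots,a_n,b)\in\Z^{n+1}$ with $\langle u,v_{01}\rangle=1$, $\langle u,v_{31}\rangle=1-l$ and $\langle u,v_{32}\rangle=1-l$. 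As $u_3=e_3$ (Construction~\ref{p2cons T0}), the last two equations read $l\,a_3+b\,d_{31}=1-l=l\,a_3+b\,d_{32}$; subtracting and using $d_{31}\neq d_{32}$ gives $b=0$, whence $l\,a_3=1-l$ forces $l\mid 1$, i.e.\ $l=1$, and then $\langle u,v_{01}\rangle=l_{01}\langle(a_1,\dots,a_n),u_0\rangle=1$ forces $l_{01}=1$. The ordering conventions of Remark~\ref{non-redundancy}(C) then yield $l_{11}=l_{21}=1$ and $d_{31}=0$, while the defining inequality of Construction~\ref{cons C} gives $d_{32}\geq 1$.

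Next I would substitute $l_{01}=l_{11}=l_{21}=l_{31}=l_{32}=1$ and $d_{31}=0$ into the criteria. In Proposition~\ref{fanoprop}(C) one gets $(d_{32}-d_{31})/(l_{31}d_{32}-l_{32}d_{31})=1$ and $l_{32}-l_{31}=0$, so the $d_3$-terms cancel and the Fano condition reduces to
\[
3+\sum_{i=4}^{n}\frac{d_i}{l_{i1}}-\sum_{i=4}^{n}d_i>0 .
\]
Since $X$ is non-toric, $l_{i1}\geq 2$ for $i\geq 4$ (Remark~\ref{non-toricity}(C)), so $\sum_{i=4}^{n}d_i<6$; hence $n\leq 8$ and only finitely many choices of $n$, of $d_3\geq 2$ and of the pairs $(d_4,l_{41}),\dots,(d_n,l_{n1})$ remain. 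For each such choice, running through the maximal $X$-cones of Proposition~\ref{maximal cones}(C) --- the cone omitting $v_{32}$, the cone omitting $v_{31}$ and the three-dimensional cones $\{v_{31},v_{32},v_{i1}\}$ --- and solving the corresponding linear systems for $u$ exactly as in the proof of Proposition~\ref{gorensteincor2} produces a small system of unit-fraction equations for $d_{01},d_{11},d_{21},d_{41},\dots,d_{n1},d_{32}$ together with integrality constraints. Solving these with Remark~\ref{unit fraction remark}, discarding solutions violating the Fano inequality, non-toricity or the normalizations of Remark~\ref{non-redundancy}(C), and recording the survivors should yield exactly $8$ tuples $(P,g)$; since their Cox ring data are pairwise inequivalent under the transformations of Remark~\ref{non-redundancy}, these give $8$ distinct families.

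The main obstacle is the middle step: with Proposition~\ref{gorensteincor2}(C) unavailable, the Cartier computation of Corollary~\ref{gorensteincor} has to be redone from scratch on every maximal $X$-cone in this degenerate regime and repackaged into unit-fraction form; in particular one must bound the single parameter $d_{32}$, which is not controlled by the convention $l_{i1}>d_{i1}$, and verify that the finite search overlooks no admissible datum.
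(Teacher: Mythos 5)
Your first step is sound and in fact mirrors what the paper does: you correctly observe that $l_{31}=l_{32}$ makes $\mu=0$, and your Cartier computation on the $X$-cone $\{v_{01},v_{31},v_{32}\}$ (using $d_{31}\neq d_{32}$ to kill the last coordinate of $u$) legitimately forces $l_{31}=l_{32}=1$ and $l_{01}=1$, hence $l_{11}=l_{21}=1$ and $d_{31}=0$ by Remark~\ref{non-redundancy}; the paper obtains the same facts from the $\nu,\lambda_i$-conditions of Proposition~\ref{gorensteincor2} read with $\mu=0$.

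The gap is everything after that. Your claim that the reduced Fano inequality $3+\sum_{i\geq 4}d_i/l_{i1}-\sum_{i\geq 4}d_i>0$ leaves ``only finitely many choices of $n$, of $d_3\geq 2$ and of the pairs $(d_i,l_{i1})$'' is not justified: $d_3$ cancels out of that inequality entirely, $d_{32}$ is not constrained by it (as you yourself note at the end), and the multiplicities $l_{i1}$ for $i\geq 4$ are likewise unbounded by it (e.g.\ $n=4$, $d_4=1$, $l_{41}$ arbitrary satisfies it). The finiteness, and indeed the whole content of the lemma, comes from the Gorenstein integrality conditions, which you defer to an unexecuted search that ``should yield exactly $8$ tuples.'' Since the statement is precisely the count of eight families, this is the missing core. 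The paper closes it quickly: applying your own cone argument also to the $X$-cones $\{v_{31},v_{32},v_{i1}\}$ with $i\geq 4$ (equivalently, the conditions $\lambda_i=1/l_{i1}\in\Z$ of Proposition~\ref{gorensteincor2} at $\mu=0$) forces $l_{i1}=1$ for all $i$, which contradicts non-toricity (Remark~\ref{non-toricity}) unless $n=3$; then $\alpha_1=-3/d_{01}\in\Z$ and $\alpha_2=3/(d_3d_{32}-d_{01})\in\Z_{>0}$ give $d_{01}\in\{1,3\}$ and $d_3d_{32}-d_{01}\in\{1,3\}$, so $d_3d_{32}\leq 6$ and one is reduced to an explicit short list of triples $(d_{01},d_{32},d_3)$, of which exactly the eight listed in the paper survive the checks of Propositions~\ref{fanoprop} and~\ref{gorensteincor2}. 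Without this (or an equivalent) bounding argument and the explicit enumeration, your proposal does not establish the lemma.
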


\begin{proof}
    By Proposition \ref{gorensteincor2}, since $\mu=0$, we have $l_{i1}=1$ for every $i$. By Remark \ref{non-toricity}, it follows that $n=3$. Then
    $$\alpha_1=\dfrac{3}{-d_{01}}\in\Z_{<0},\hspace{5pt}\alpha_2=\dfrac{3}{d_3d_{32}-d_{01}}\in\Z_{>0},$$ hence $d_{01}=1,3$ and $d_3d_{32}-d_{01}=1,3$. Therefore, we get the following eight admissible triples $(d_{01},d_{32},d_3)$, each corresponding to a solution of the conditions in both Proposition \ref{fanoprop} and \ref{gorensteincor2}:
    \begin{align*}
        (1,1,2),\hspace{5pt}(1,2,2),\hspace{5pt}(3,2,2),\hspace{5pt}(3,3,2),\hspace{5pt}(3,2,3),\hspace{5pt}(1,1,4),\hspace{5pt}(3,1,4),\hspace{5pt}(3,1,6).
    \end{align*}
\end{proof}

\begin{lem} \label{condition 1 type C}
    Let $X=X(g,P)$ be a non-toric Gorenstein Fano threefold of Type C arising form Construction \ref{p2cons T1}. Then we have
    $$
    \dfrac{1}{l_{01}}+\dfrac{1}{l_{11}}+\dfrac{1}{l_{21}}+\dfrac{d_3}{l_{32}}+\sum_{i=4}^n\dfrac{d_i}{l_{i1}}>\sum_{i=3}^n d_i.
    $$
\end{lem}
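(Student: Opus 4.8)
\emph{The plan} is to reduce the statement to the positivity of the integer $\alpha_2$ from Proposition \ref{gorensteincor2} and then to prove that positivity by combining the Fano inequality with a divisibility coming from Gorensteinness. Set $A := \tfrac{d_{01}}{l_{01}}-\tfrac{d_{11}}{l_{11}}-\tfrac{d_{21}}{l_{21}}-\sum_{i=4}^n\tfrac{d_id_{i1}}{l_{i1}}$ and $D_2 := \tfrac{d_3d_{32}}{l_{32}}-A$; by the defining inequality of Construction \ref{cons C} we have $D_2>0$. Writing $N_2$ for the left-hand side of the lemma, one checks that $N_2-\sum_{i=3}^n d_i$ is exactly the numerator of $\alpha_2$ (the case $j=2$ of Proposition \ref{gorensteincor2}) and $D_2$ its denominator, so $\alpha_2 D_2 = N_2-\sum_{i=3}^n d_i$. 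Hence the lemma is equivalent to $\alpha_2>0$, i.e. (as $\alpha_2\in\Z$) to $\alpha_2\ge 1$.

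\emph{Step 1 (Fano gives $\mu+\alpha_2>0$).} I would substitute the Gorenstein relations $\lambda_0=\tfrac{\mu d_{01}-1}{l_{01}}$ and $\lambda_i=\tfrac{\mu d_{i1}+1}{l_{i1}}$ (for $i\in\{1,2,4,\dots,n\}$) into the Fano inequality of Proposition \ref{fanoprop}: the free fractions $\tfrac1{l_{01}},\dots$ cancel and the inequality collapses to $d_3\nu-\lambda_0+\lambda_1+\lambda_2+\sum_{i=4}^n d_i\lambda_i>\sum_{i=3}^n d_i$; since the left-hand side is an integer it is $\ge 1+\sum_{i=3}^n d_i$. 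Using the identity $\nu l_{32}=\mu d_{32}+1$ (immediate from the definitions of $\nu$ and $\mu$) the same left-hand side equals $\mu D_2+N_2=(\mu+\alpha_2)D_2+\sum_{i=3}^n d_i$, whence $(\mu+\alpha_2)D_2\ge 1$ and therefore $\mu+\alpha_2>0$. (If one uses $\nu l_{31}=\mu d_{31}+1$ instead, one only recovers the weaker inequality with $l_{31}$ in place of $l_{32}$ — this is precisely why Step 2 is needed.)

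\emph{Step 2 (divisibility).} I would then show that $\mu<l_{32}$ and that $l_{32}\mid\mu+\alpha_2$; since $\mu+\alpha_2$ is a positive multiple of $l_{32}$ it must be $\ge l_{32}>\mu$, giving $\alpha_2>0$ and finishing the proof. For $\mu<l_{32}$: the integer $l_{31}d_{32}-l_{32}d_{31}$ is positive (Construction \ref{cons C} gives $\tfrac{d_{32}}{l_{32}}>\tfrac{d_{31}}{l_{31}}$), hence $\ge 1$, so $\mu=\tfrac{l_{32}-l_{31}}{l_{31}d_{32}-l_{32}d_{31}}\le l_{32}-l_{31}<l_{32}$. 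For $l_{32}\mid\mu+\alpha_2$: by Propositions \ref{X-cones} and \ref{maximal cones} the set $\sigma=\{v_{01},v_{11},v_{21},v_{32},v_{41},\dots,v_{n1}\}$ is a maximal $X$-cone, so by Corollary \ref{gorensteincor} there is $u\in\Z^{n+1}$ with $\langle u,v_{01}\rangle=\langle u,v_{11}\rangle=\langle u,v_{21}\rangle=1$, $\langle u,v_{32}\rangle=1-l_{32}$ and $\langle u,v_{i1}\rangle=1-l_{i1}$ for $i\ge 4$; running the argument from the proof of Proposition \ref{gorensteincor2} (i.e. using the column relation $u_0+u_1+u_2+\sum_{i\ge3}d_iu_i=0$ of $B$) shows that the last coordinate of $u$ equals $\alpha_2$, so the equation attached to $v_{32}$ reads $l_{32}\langle u',u_3\rangle+d_{32}\alpha_2=1-l_{32}$, forcing $l_{32}\mid 1-d_{32}\alpha_2$. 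Combining this with $l_{32}\mid\mu d_{32}+1$ (from $\nu\in\Z$) and $\gcd(l_{32},d_{32})=1$ ($v_{32}$ is a column of a generator matrix, hence primitive) yields $l_{32}\mid\mu+\alpha_2$.

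\emph{Main obstacle.} Step 1 and the bound $\mu<l_{32}$ are routine manipulations of the Fano and Construction inequalities. The delicate point is the divisibility $l_{32}\mid\mu+\alpha_2$: one has to identify $\alpha_2$ with the last coordinate of the Gorenstein linear form attached to $\sigma$ and read off the congruence from its $v_{32}$-component, which amounts to carrying out, in the present case, the part of the proof of Proposition \ref{gorensteincor2} that handles the maximal cone through $v_{32}$.
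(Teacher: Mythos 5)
Your proof is correct, but it takes a genuinely different route from the paper's, which is much shorter: the paper argues by contradiction, feeding the negated inequality into the Fano condition of Proposition \ref{fanoprop} to obtain $d_3\nu-\frac{d_3}{l_{32}}-\mu A>0$ with $A=\frac{d_{01}}{l_{01}}-\frac{d_{11}}{l_{11}}-\frac{d_{21}}{l_{21}}-\sum_{i\geq 4}\frac{d_id_{i1}}{l_{i1}}$, and then using $l_{32}\nu-1=d_{32}\mu$ to rewrite this as $\mu\left(\frac{d_3d_{32}}{l_{32}}-A\right)>0$, which it declares to contradict Construction \ref{cons C}. As you in effect point out, this gives an immediate contradiction only when $\mu\leq 0$ (e.g. $l_{31}=l_{32}$), since Construction \ref{cons C} asserts precisely $\frac{d_3d_{32}}{l_{32}}>A$; running the same manipulation with $l_{31},d_{31}$ does contradict Construction \ref{cons C}, but then one proves only the weaker inequality with $l_{31}$ in place of $l_{32}$ --- exactly the pitfall you flag at the end of your Step 1. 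Your argument supplies what is needed for the stated $l_{32}$-version by genuinely invoking Gorenstein integrality: the reduction to $\alpha_2>0$, the cancellation yielding the integer inequality $d_3\nu-\lambda_0+\lambda_1+\lambda_2+\sum_{i\geq 4}d_i\lambda_i>\sum_{i\geq 3}d_i$ and hence $\mu+\alpha_2>0$, the identification of $\alpha_2$ with the last coordinate of the Gorenstein linear form on the $X$-cone omitting $v_{31}$ (legitimate, since Proposition \ref{X-cones} shows this is an $X$-cone and Proposition \ref{cartierprop} then applies), the congruences $l_{32}\mid d_{32}\alpha_2-1$ and $l_{32}\mid \mu d_{32}+1$ together with $\gcd(l_{32},d_{32})=1$ (primitivity of $v_{32}$), and the bound $\mu\leq l_{32}-l_{31}<l_{32}$ all check out, giving $\alpha_2\geq l_{32}-\mu>0$. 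The trade-off is clear: the paper's approach is a four-line manipulation of the Fano and construction inequalities, but as printed it secures at best the $l_{31}$-statement; your longer argument, which redoes part of the cone analysis behind Proposition \ref{gorensteincor2}, yields the stronger $l_{32}$-statement actually needed later (for instance in Lemma \ref{condition 2 type C}, where $l_{32}>12$ makes $\frac{d_3}{l_{32}}$ small).
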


\begin{proof}
    We use the notation of Proposition \ref{gorensteincor2}.
    Suppose that
    $$
    \dfrac{1}{l_{01}}+\dfrac{1}{l_{11}}+\dfrac{1}{l_{21}}+\sum_{i=4}^n\dfrac{d_i}{l_{i1}}-\sum_{i=3}^n d_i\leq-\dfrac{d_3}{l_{32}}.
    $$
    Then by Proposition \ref{fanoprop} we have
    \begin{align*}
    d_3\nu-\dfrac{d_3}{l_{32}}-\mu\left(\dfrac{d_{01}}{l_{01}}-\dfrac{d_{11}}{l_{11}}-\dfrac{d_{21}}{l_{21}}-\sum_{i=4}^n \dfrac{d_id_{i1}}{l_{i1}}\right)>0.
    \end{align*}
    Since $l_{32}\nu-1=d_{32}\mu$, it follows that
    $$
    \dfrac{d_3d_{32}}{l_{32}}-\dfrac{d_{01}}{l_{01}}+\dfrac{d_{11}}{l_{11}}+\dfrac{d_{21}}{l_{21}}+\sum_{i=4}^n \dfrac{d_id_{i1}}{l_{i1}}>0,
    $$
    contradicting Construction \ref{cons C}.
\end{proof}

\begin{lem} \label{unit fraction type c}
    Let $X=X(g,P)$ be a non-toric Gorenstein Fano threefold of Type C arising form Construction \ref{p2cons T1} with $l_{32}>l_{31}$. With the notation of Proposition \ref{gorensteincor2} we have
    \begin{align*}
    &\dfrac{1}{l_{01}}+\dfrac{1}{l_{11}}+\dfrac{1}{l_{21}}+\sum_{i=3}^n\dfrac{d_i}{l_{i1}}+\dfrac{\mu(d_3\nu-\lambda_0+\lambda_1+\lambda_2+\sum_{i=4}^n d_i\lambda_i-\sum_{i=3}^n d_i)}{\alpha_2+\mu}=\\
    &=d_3\nu-\lambda_0+\lambda_1+\lambda_2+\sum_{i=4}^n d_i\lambda_i.
    \end{align*}
    Furthermore, we have the following inequalities:
    \begin{itemize}
        \item $0<\nu<d_{32}<l_{32}$,
        \item $0\leq d_{31}<l_{31}<l_{32}$,
        \item $\lambda_i,\nu<\mu<l_{32}$,
        \item $\sum_{i=3}^n d_i\leq 5$,
        \item $\lambda_0-\lambda_1-\lambda_2-\sum_{i=4}^n d_i\lambda_i+\sum_{i=3}^n d_i<d_3\nu$.
    \end{itemize}
\end{lem}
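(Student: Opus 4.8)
The plan is to push everything through three elementary identities extracted from the Gorenstein criterion. Clearing denominators in the definitions of $\nu$, $\mu$ and the $\lambda_i$ in Proposition~\ref{gorensteincor2}(C) (equivalently, reading off the linear forms attached to the maximal $X$-cones of Corollary~\ref{gorensteincor}) gives
\[
l_{31}\nu-\mu d_{31}=1,\qquad l_{32}\nu-\mu d_{32}=1,\qquad l_{i1}\lambda_i=\mu d_{i1}+\varepsilon_i\quad(\varepsilon_0=-1,\ \varepsilon_i=1\text{ for }i\ge1,\ i\ne3).
\]
Alongside these I would record the closed forms $\nu=(d_{32}-d_{31})/D$ and $\mu=(l_{32}-l_{31})/D$ with $D:=l_{31}d_{32}-l_{32}d_{31}$, and observe that the standing hypothesis $l_{32}>l_{31}$ together with $\mu\in\Z_{>0}$ forces $D\ge1$, whence $\nu\le d_{32}-d_{31}$ and $\mu\le l_{32}-l_{31}<l_{32}$ (using $l_{31}\ge1$). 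This already yields $\mu<l_{32}$.

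For the displayed identity I would start from the defining fraction for $\alpha_2$ in Proposition~\ref{gorensteincor2}(C), multiply its numerator and denominator by $\mu$, and substitute the three cone relations term by term; a direct computation turns the numerator into $\mu$ times (the unit-fraction sum that appears added to the fraction on the left-hand side of the asserted identity) minus $\mu\sum_{i=3}^n d_i$, and the denominator into (the right-hand side $d_3\nu-\lambda_0+\lambda_1+\lambda_2+\sum_{i=4}^n d_i\lambda_i$ of the identity) minus that same sum. Solving $\alpha_2=\mu(\text{sum}-\sum d_i)/(\text{RHS}-\text{sum})$ for the difference $\text{RHS}-\text{sum}$ and transposing yields exactly the claimed equation. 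Running the very same substitution through the Fano inequality of Proposition~\ref{fanoprop}(C) collapses it to $d_3\nu-\lambda_0+\lambda_1+\lambda_2+\sum_{i=4}^n d_i\lambda_i>\sum_{i=3}^n d_i$, which upon transposing is the last asserted inequality; its positivity is also what guarantees that the fraction in the identity is meaningful.

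The remaining chain of inequalities is short bookkeeping on top of the cone relations and the normalization $0\le d_{i1}<l_{i1}$ of Remark~\ref{non-redundancy}(C). From $l_{31}\nu=1+\mu d_{31}$ and $d_{31}\le l_{31}-1$ one gets $\nu\le\mu$; feeding this into $l_{32}\nu=1+\mu d_{32}$ gives $d_{32}<l_{32}$ and, apart from the explicit borderline configuration $l_{31}=1,\ d_{31}=0,\ d_{32}=1$ (checked by hand), also $\nu<d_{32}$, so that $0<\nu<d_{32}<l_{32}$; together with $d_{31}\ge0$ and the hypothesis $l_{31}<l_{32}$ this is $0\le d_{31}<l_{31}<l_{32}$. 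The bound $\lambda_i<\mu$ drops out of $l_{i1}\lambda_i=\mu d_{i1}+\varepsilon_i$ with $d_{i1}\le l_{i1}-1$ (again a trivial borderline case at $\mu=1$ handled separately), and $\nu<\mu$ has been noted. Finally $\sum_{i=3}^n d_i\le 5$ follows exactly as in the Type~B analogue Lemma~\ref{cases lemma}: bound the left-hand side of Lemma~\ref{condition 1 type C} using $l_{41},\dots,l_{n1}\ge2$ (Remark~\ref{non-toricity}), $l_{32}\ge2$, and $l_{01},l_{11},l_{21}\ge1$ to get $3+\tfrac12\sum_{i=3}^n d_i>\sum_{i=3}^n d_i$, hence $\sum_{i=3}^n d_i<6$.

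I expect the main obstacle to be purely organizational rather than conceptual: keeping the several $i=3$ contributions straight during the substitution into the $\alpha_2$-fraction and into the Fano inequality — some of them carry $l_{31}$, others $l_{32}$ — and pinning down exactly where each inequality is genuinely strict, since the handful of boundary configurations (essentially $\mu=1$, or $l_{31}=1$ with $d_{31}=0$) must be excluded directly rather than by the general estimate.
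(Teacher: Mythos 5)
Your proposal is correct and follows essentially the same route as the paper: clear denominators in the Gorenstein data to get $l_{3j}\nu-\mu d_{3j}=1$ and $l_{i1}\lambda_i=\mu d_{i1}\pm1$, multiply the $\alpha_2$-fraction by $\mu$ and rearrange to obtain the displayed identity, use the Fano positivity to get $d_3\nu>\lambda_0-\lambda_1-\lambda_2-\sum_{i\geq4}d_i\lambda_i+\sum_{i\geq3}d_i$, and bound $\sum_{i\geq3}d_i\leq5$ from Lemma \ref{condition 1 type C} with $l_{32}\geq2$ and $l_{i1}\geq2$ for $i\geq4$. The only (harmless) deviation is that you substitute directly into Proposition \ref{fanoprop}(C) where the paper routes the same computation through the inequality of Construction \ref{cons C} combined with Lemma \ref{condition 1 type C}, and you spell out the elementary bounds ($\nu\leq\mu$, $\mu\leq l_{32}-l_{31}$, etc.) that the paper dismisses as immediate.
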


\begin{proof}
    Since $l_{32}>l_{31}$, by Lemma \ref{condition 2 type C} we have
    $$
    3>\dfrac{1}{l_{01}}+\dfrac{1}{l_{11}}+\dfrac{1}{l_{21}}>\dfrac{\sum_{i=3}^n d_i}{2}\implies \sum_{i=3}^n d_i\leq 5.
    $$
    In particular, $n\leq 7$. Furthermore, since
    $$
    d_{01}=\dfrac{\lambda_0 l_{01}+1}{\mu},\hspace{5pt} d_{i1}=\dfrac{\lambda_1 l_{i1}-1}{\mu} \text{~for $i\geq 1$},\hspace{5pt} d_{32}=\dfrac{\nu l_{32}-1}{\mu},
    $$
    we have
    \begin{align*}
    &d_3 \dfrac{d_{32}\mu}{l_{32}}>\lambda_0-\lambda_1-\lambda_2-\sum_{i=4}^n d_i\lambda_i+\dfrac{1}{l_{01}}+\dfrac{1}{l_{11}}+\dfrac{1}{l_{21}}+\sum_{i=4}^n\dfrac{d_i}{l_{i1}}>\\
    &>\lambda_0-\lambda_1-\lambda_2-\sum_{i=4}^n\lambda_i+\sum_{i=3}^n d_i-\dfrac{d_3}{l_{32}}\implies\\
    & d_3\nu>\lambda_0-\lambda_1-\lambda_2-\sum_{i=4}^n d_i\lambda_i+\sum_{i=3}^n d_i.
    \end{align*}
    Furthermore, by rearranging $\alpha_2$ we get
    \begin{align*}
    &\alpha_2=\mu\dfrac{\dfrac{1}{l_{01}}+\dfrac{1}{l_{11}}+\dfrac{1}{l_{21}}+\dfrac{1}{l_{32}}+\sum_{i=4}^n\dfrac{d_i}{l_{i1}}-\sum_{i=3}^nd_i}{d_3\nu-\lambda_0+\lambda_1+\lambda_2+\sum_{i=4}^n d_i\lambda_i-\dfrac{1}{l_{01}}-\dfrac{1}{l_{11}}-\dfrac{1}{l_{21}}-\sum_{i=3}^n\dfrac{d_i}{l_{i1}}}\implies\\
    &\dfrac{1}{l_{01}}+\dfrac{1}{l_{11}}+\dfrac{1}{l_{21}}+\sum_{i=3}^n\dfrac{d_i}{l_{i1}}+\dfrac{\mu(d_3\nu-\lambda_0+\lambda_1+\lambda_2+\sum_{i=4}^n d_i\lambda_i-\sum_{i=3}^n d_i)}{\alpha_2+\mu}=\\
    &=d_3\nu-\lambda_0+\lambda_1+\lambda_2+\sum_{i=4}^n d_i\lambda_i.
    \end{align*}
    The other inequalities follow immediately by Remark \ref{non-redundancy} and Proposition \ref{gorensteincor2}.
\end{proof}

\begin{lem} \label{condition 2 type C}
    Let $X=X(g,P)$ be a non-toric Gorenstein Fano threefold of Type C arising form Construction \ref{p2cons T1} with $l_{32}>12$. Then $n=3$ and $d_3=2$. In particular, we have
    $$
    3\geq\dfrac{1}{l_{01}}+\dfrac{1}{l_{11}}+\dfrac{1}{l_{21}}>\dfrac{11}{6}.
    $$
\end{lem}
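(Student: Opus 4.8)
The plan is to convert the hypothesis $l_{32}>12$ into a rigid numerical constraint by means of Lemma~\ref{condition 1 type C}, and then to clear the few remaining configurations using the integrality conditions of Proposition~\ref{gorensteincor2}. First I would dispose of the degenerate case $l_{31}=l_{32}$, which has already been handled: there $\mu=0$ forces $l_{i1}=1$ for every $i$, so $l_{32}=1\le 12$. Hence we may assume $l_{32}>l_{31}$, so that Lemma~\ref{unit fraction type c} applies; recall also that by Remark~\ref{non-toricity} we may assume $l_{i1}\ge 2$ for every $i\ge 4$.

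Next I would extract a crude bound. Using $l_{32}\ge 13$, $l_{i1}\ge 2$ for $i\ge 4$ and $l_{01},l_{11},l_{21}\ge 1$, Lemma~\ref{condition 1 type C} gives
\[
\sum_{i=3}^n d_i \;<\; \frac{1}{l_{01}}+\frac{1}{l_{11}}+\frac{1}{l_{21}}+\frac{d_3}{l_{32}}+\sum_{i=4}^n\frac{d_i}{l_{i1}} \;\le\; 3+\frac{d_3}{13}+\frac12\sum_{i=4}^n d_i,
\]
that is, $\tfrac{12}{13}d_3+\tfrac12\sum_{i=4}^n d_i<3$. Since $d_3\ge 2$ by Construction~\ref{cons C}, an elementary inspection of the possible $n$ leaves only three shapes of the data $(n;d_3,\dots,d_n)$: $n=3$ with $d_3\in\{2,3\}$; $n=4$ with $d_3=2$ and $d_4\in\{1,2\}$; and $n=5$ with $d_3=2$, $d_4=d_5=1$.

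The core of the proof, and the step I expect to be the most laborious, is to rule out every shape other than $(n,d_3)=(3,2)$. For each such shape I would feed the explicit degrees back into Lemma~\ref{condition 1 type C}; the resulting inequality pins $l_{01},l_{11},l_{21}$ --- and $l_{41}$, where it occurs --- to a short explicit list. For instance, for $n=3$, $d_3=3$ one gets $\tfrac1{l_{01}}+\tfrac1{l_{11}}+\tfrac1{l_{21}}>3-\tfrac3{13}>\tfrac52$, forcing $l_{01}=l_{11}=l_{21}=1$. With these values fixed, Proposition~\ref{gorensteincor2} expresses $\alpha_1$ and $\alpha_2$ as explicit fractions of bounded numerator, whose signs are prescribed by Construction~\ref{cons C}; their integrality forces the denominators into finitely many values and thereby constrains $d_{31},d_{32}$ and $d_{01},d_{11},d_{21}$ in terms of $l_{31},l_{32}$. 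Combining this with $\mu,\nu\in\Z_{>0}$, the identities $l_{31}\nu-d_{31}\mu=l_{32}\nu-d_{32}\mu=1$ read off from the definitions of $\mu$ and $\nu$, and the bounds $0\le d_{31}<l_{31}<l_{32}$, $d_{32}<l_{32}$, $\nu<\mu<l_{32}$ supplied by Lemma~\ref{unit fraction type c}, I am left with finitely many Diophantine systems; checking these shows that every admissible solution has $l_{32}\le 12$, a contradiction. Hence $n=3$ and $d_3=2$.

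Finally, for the displayed inequality: with $n=3$ and $d_3=2$, Lemma~\ref{condition 1 type C} reads $\tfrac1{l_{01}}+\tfrac1{l_{11}}+\tfrac1{l_{21}}+\tfrac2{l_{32}}>2$, and since $l_{32}>12$ we have $\tfrac2{l_{32}}<\tfrac16$; therefore $\tfrac1{l_{01}}+\tfrac1{l_{11}}+\tfrac1{l_{21}}>2-\tfrac16=\tfrac{11}{6}$, while the bound $\tfrac1{l_{01}}+\tfrac1{l_{11}}+\tfrac1{l_{21}}\le 3$ is immediate.
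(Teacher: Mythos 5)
Your overall strategy is the same as the paper's: use Lemma~\ref{condition 1 type C} together with $l_{32}>12$ to restrict the possible shapes $(n;d_3,\dots,d_n)$, then eliminate everything except $n=3$, $d_3=2$ via the integrality conditions of Proposition~\ref{gorensteincor2}, and finally read off $3\geq \tfrac1{l_{01}}+\tfrac1{l_{11}}+\tfrac1{l_{21}}>\tfrac{11}{6}$ from Lemma~\ref{condition 1 type C} with $\tfrac{2}{l_{32}}<\tfrac16$. The parts you actually execute are correct: the disposal of $l_{31}=l_{32}$ (though the clean reason is $\nu=1/l_{32}\in\Z_{>0}$ forcing $l_{32}=1$, rather than ``$l_{i1}=1$ for all $i$''), the crude bound $\tfrac{12}{13}d_3+\tfrac12\sum_{i\ge4}d_i<3$ and the resulting list of shapes, the reduction of the $(3;3)$ case to $l_{01}=l_{11}=l_{21}=1$, and the concluding inequality.

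The gap is precisely the step you call the most laborious. For the shapes $(3;3)$, $(4;2,1)$, $(4;2,2)$, $(5;2,1,1)$ you assert that integrality of $\alpha_1,\alpha_2,\mu,\nu$, the identities $l_{31}\nu-d_{31}\mu=l_{32}\nu-d_{32}\mu=1$ and the bounds of Lemma~\ref{unit fraction type c} leave ``finitely many Diophantine systems'' whose inspection yields $l_{32}\le 12$; but as described this is not yet a finite check, and the claimed contradiction is exactly the content of the lemma. For instance, in the $(3;3)$ case integrality of $\alpha_2=\tfrac{3}{3d_{32}-d_{01}l_{32}}$ only bounds the combination $3d_{32}-d_{01}l_{32}$, not the individual unknowns, and $l_{31},l_{32},d_{31},d_{32},d_{01}$ are not a priori confined to any bounded box --- bounding $l_{32}$ is what has to be proved. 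The paper closes this by explicit case-specific deduction chains: for $n=4$ it derives $d_{01}=2$, then $\beta_j\in\Z$ forces $l_{31}=2d_{31}+1$, $l_{32}=2d_{32}-1$, then $\nu\in\Z_{>0}$ gives $d_{31}=0$ and $\mu=2-2/d_{32}$, whence $l_{32}\le 3$; for $d_3=3$ it shows $3d_{3j}-d_{01}l_{3j}\mid 1$ and again reaches $l_{32}$ small. None of this (nor the cheaper elimination of the shapes with $d_4=1$, which the paper gets essentially for free because Remark~\ref{non-toricity} forces some $l_{j1}\ge 2$ with $j\in\{0,1,2\}$ and the inequality then collapses) is carried out in your proposal, so the elimination of all shapes other than $(3;2)$ under $l_{32}>12$ remains unproved.
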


\begin{proof}
    We use the notation of Proposition \ref{gorensteincor2}.
    By Lemma \ref{condition 1 type C} we have
    $$
    3\geq\dfrac{1}{l_{01}}+\dfrac{1}{l_{11}}+\dfrac{1}{l_{21}}>\dfrac{11}{12}d_3+\sum_{i=4}^n \dfrac{d_i}{2}\implies n=3,4.
    $$
    Assume that $n=4$. Then $d_3=2$ and we have
    $$
    3\geq\dfrac{1}{l_{01}}+\dfrac{1}{l_{11}}+\dfrac{1}{l_{21}}>\dfrac{11}{6}+\dfrac{(l_{41}-1)d_4}{l_{41}}>2\implies l_{11}=l_{21}=1, d_4=l_{01}=l_{41}=2.
    $$
    Substituting these values in the condition of Construction \ref{cons C} yields
    $$
    2>2\dfrac{d_{32}}{l_{32}}>d_{01}-1>2\dfrac{d_{31}}{l_{31}}>0\implies d_{01}=2.
    $$
    It follows that
    \begin{align*}
    &\beta_j=\dfrac{1-\dfrac{2d_{3j}}{2d_{3j}-l_{3j}}}{l_{3j}}=\dfrac{-1}{2d_{3j}-l_{3j}}\implies l_{31}=2d_{31}+1,\hspace{5pt} l_{32}=2d_{32}-1\implies \\
    &\nu=\dfrac{d_{32}-d_{31}}{d_{32}+d_{31}}\in\Z_{>0}\implies d_{31}=0, l_{31}=1\implies \mu=2-\dfrac{2}{d_{32}}\in\Z_{>0},
    \end{align*}
    which in turn implies $d_{32}\leq2$ and $l_{32}=2d_{32}-1\leq 3$, contradicting $l_{32}>12$.
    It follows that $n=3$ and $d_3\leq 3$. Assume now $d_3=3$. Then $l_{01}=l_{11}=l_{21}=1$, and by Construction \ref{cons C} we have
    $$
    3>3\dfrac{d_{32}}{l_{32}}>d_{01}>3\dfrac{d_{31}}{l_{31}}\geq0\implies d_{01}=1,2.
    $$
    Furthermore,
    \begin{align*}
    &\alpha_j=\dfrac{3}{3d_{3j}-d_{01}l_{3j}},\hspace{5pt}\beta_j=\dfrac{-d_{01}}{3d_{3j}-d_{01}l_{3j}}\implies 3d_{3j}-d_{01}l_{3j}|1\\
    &\implies l_{31}=\dfrac{3d_{31}+1}{d_{01}},\hspace{5pt}l_{32}=\dfrac{3d_{32}-1}{d_{01}}\implies \nu=d_{01}\dfrac{d_{32}-d_{31}}{d_{32}+d_{31}}\in\Z_{>0}.
    \end{align*}
    If $d_{31}=0$, $l_{31}=1$ we have
    $$
    \mu=3-\dfrac{2}{d_{32}}\implies d_{32}=1,2\implies \dfrac{1}{2}>3\dfrac{d_{32}}{l_{32}}>d_{01}\geq 1,
    $$
    a contradiction.
    Hence $d_{31}\geq 1$. From $\nu\in\Z_{>0}$ it follows that $d_{01}=2$, and then
    $$
    d_{32}=3d_{31}\implies \mu=\dfrac{3}{2}-\dfrac{1}{2d_{31}}\implies d_{31}=1,d_{32}=3\implies \dfrac{3}{4}\geq\dfrac{9}{l_{32}}>1,
    $$
    a contradiction. We conclude that $d_3=2$.
\end{proof}

\begin{lem}\label{no solution above 12}
    There are no non-toric Gorenstein Fano threefolds of Type C arising form Construction \ref{p2cons T1} with $l_{32}>12$.
\end{lem}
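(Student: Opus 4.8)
The plan is to bootstrap from Lemma \ref{condition 2 type C}, which already confines us to $n = 3$, $d_3 = 2$, and the narrow window $\frac{11}{6} < \frac{1}{l_{01}} + \frac{1}{l_{11}} + \frac{1}{l_{21}} \le 3$. First I would record that $l_{32} > 12$ rules out the case $l_{31} = l_{32}$: there Proposition \ref{gorensteincor2}(C) forces $l_{i1} = 1$ for all $i$, hence $l_{32} = 1$. So we always have $l_{32} > l_{31}$, which is precisely the situation in which Lemma \ref{unit fraction type c} is available and $\mu \in \Z_{>0}$ is a genuine constraint. Using the normalization $l_{11} \le l_{21} \le l_{01}$ of Remark \ref{non-redundancy}, the window forces $l_{11} = 1$ (otherwise all three reciprocals are $\le \frac12$ and the sum is $\le \frac32$), and then $\frac{1}{l_{01}} + \frac{1}{l_{21}} > \frac{5}{6}$ with $l_{21} \le l_{01}$ leaves exactly two possibilities: $l_{21} = 1$ with $l_{01}$ arbitrary, or $(l_{21}, l_{01}) = (2,2)$.

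The subcase $(l_{11}, l_{21}) = (1,1)$ is the one that looks dangerous, since $l_{01}$ is a priori unbounded; the point is that it collapses by comparing two identities rather than by enumeration. On one hand, the Gorenstein relations give $\mu d_{01} = \lambda_0 l_{01} + 1$ and $\mu d_{31} = \nu l_{31} - 1$, so the inequality $\frac{d_{01}}{l_{01}} > \frac{2 d_{31}}{l_{31}}$ extracted from Construction \ref{cons C} (note $d_{11} = d_{21} = 0$ here) rearranges, after multiplying by $\mu$, to $2\nu - \lambda_0 > \frac{1}{l_{01}} + \frac{2}{l_{31}}$. On the other hand, specializing the identity of Lemma \ref{unit fraction type c} to $n = 3$, $d_3 = 2$, $l_{11} = l_{21} = 1$ (so $\lambda_1 = \lambda_2 = 1$), almost every term cancels and one is left with $\frac{1}{l_{01}} + \frac{2}{l_{31}} = (2\nu - \lambda_0)\,\frac{\alpha_2}{\alpha_2 + \mu}$. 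Since $\alpha_2 > 0$ — its numerator is positive by Lemma \ref{condition 1 type C} and its denominator by Construction \ref{cons C} — and $\mu > 0$, the factor $\frac{\alpha_2}{\alpha_2 + \mu}$ lies strictly in $(0,1)$, so the right-hand side is strictly less than $2\nu - \lambda_0$, contradicting the displayed inequality. Hence this subcase is empty, and in fact the bound $l_{32} > 12$ is not even used.

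For the remaining subcase $(l_{11}, l_{21}, l_{01}) = (1, 2, 2)$ I would run the integrality and parity conditions of Proposition \ref{gorensteincor2}(C). The condition $\lambda_2 = \frac{\mu d_{21} + 1}{2} \in \Z$ forces $d_{21} = 1$ and $\mu$ odd; the conditions on $\beta_2$ and $\gamma_2$ force $\alpha_1$ and $\alpha_2$ odd; and an explicit computation gives $\alpha_1 = \frac{4}{4 d_{31} - l_{31}(d_{01}-1)}$ (negative, by Construction \ref{cons C}) and $\alpha_2 = \frac{4}{4 d_{32} - l_{32}(d_{01}-1)}$ (positive), so oddness pins $\alpha_1 = -1$ and $\alpha_2 = 1$, i.e. $l_{31}(d_{01}-1) = 4(d_{31}+1)$ and $l_{32}(d_{01}-1) = 4(d_{32}-1)$. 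Since $d_{01}$ is odd (from the condition on $\lambda_0$) and $d_{32} < l_{32}$ (Lemma \ref{unit fraction type c}), the second relation forces $d_{01} = 3$, whence $l_{31} = 2(d_{31}+1)$ and $l_{32} = 2(d_{32}-1)$; substituting into $\mu = \frac{l_{32} - l_{31}}{l_{31} d_{32} - l_{32} d_{31}}$ yields $\mu = \frac{d_{32} - d_{31} - 2}{d_{32} + d_{31}} < 1$, contradicting $\mu \in \Z_{>0}$.

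The step I expect to be the main obstacle is the first subcase: seeing that the seemingly infinite family $(l_{11}, l_{21}, l_{01}) = (1,1,\ast)$ need not be searched at all, but is eliminated at a stroke by playing the Lemma \ref{unit fraction type c} identity against the defining inequality of Construction \ref{cons C}. Once that is in hand, the case $(1,2,2)$ is a short bounded parity computation, and the lemma follows.
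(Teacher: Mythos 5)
Your reduction to the two shapes $(l_{11},l_{21},l_{01})=(1,1,\ast)$ and $(1,2,2)$ agrees with the paper, and your treatment of the second shape is correct (in fact slightly cleaner than the paper's, since your $\mu<1$ contradiction avoids using $l_{32}>12$ there). But the elimination of the first shape --- which you yourself identify as the crux --- is wrong, and this is exactly the part where the hypothesis $l_{32}>12$ cannot be dispensed with. Two concrete problems. First, the inequality you extract from Construction \ref{cons C} is reversed: multiplying $\tfrac{d_{01}}{l_{01}}>\tfrac{2d_{31}}{l_{31}}$ by $\mu$ and using $\mu d_{01}=\lambda_0 l_{01}+1$, $\mu d_{31}=\nu l_{31}-1$ gives $\lambda_0+\tfrac{1}{l_{01}}>2\nu-\tfrac{2}{l_{31}}$, i.e.\ $2\nu-\lambda_0<\tfrac{1}{l_{01}}+\tfrac{2}{l_{31}}$, not $>$. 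Second, the identity you specialize is the printed statement of Lemma \ref{unit fraction type c} taken literally; rederiving it from $\alpha_2$ (using $\mu d_{32}=\nu l_{32}-1$, $\mu d_{01}=\lambda_0 l_{01}+1$, $\mu d_{i1}=\lambda_i l_{i1}-1$) shows the $i=3$ term in the sum must be $\tfrac{d_3}{l_{32}}$, not $\tfrac{d_3}{l_{31}}$ --- the printed formula has a typo. With the corrected identity your specialization reads $\tfrac{1}{l_{01}}+\tfrac{2}{l_{32}}=(2\nu-\lambda_0)\tfrac{\alpha_2}{\alpha_2+\mu}$, which sits perfectly inside the chain $\tfrac{1}{l_{01}}+\tfrac{2}{l_{32}}<2\nu-\lambda_0<\tfrac{1}{l_{01}}+\tfrac{2}{l_{31}}$ coming from Construction \ref{cons C}, so no contradiction is available. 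Your argument only appears to close because the reversed inequality is played against the misprinted identity.

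That this subcase cannot be empty independently of $l_{32}$ is visible in the classification itself: e.g.\ family 42, with relation $f_2(T_{01}^{3},T_{11},T_{21})+T_{31}T_{32}^{5}$, is a Type C Gorenstein Fano with $n=3$, $d_3=2$, $l_{11}=l_{21}=1$, $l_{31}=1<l_{32}=5$ (and with $d_{01}=d_{32}=1$, $\mu=4$, $\nu=1$, $\lambda_0=1$, $\alpha_2=11$ one checks directly that the corrected identity holds while your claimed one fails numerically); families 46, 47, 53, 54, 101 are of the same shape. So the claim that the $(1,1,\ast)$ family is eliminated ``without even using $l_{32}>12$'' is false, and the lemma is not proved. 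The paper's actual route through this subcase is the laborious one you hoped to avoid: from $\tfrac{1}{l_{01}}+\tfrac{2}{l_{32}}<2\nu-\lambda_0<\tfrac{1}{l_{01}}+\tfrac{2}{l_{31}}$ it bounds $l_{31}\le 3$, splits into $l_{31}=3,2,1$ (and $\lambda_0=0,1$), pins down $\mu$ in each branch, reduces to unit-fraction equations solved via Remark \ref{unit fraction remark}, and finally discards the finitely many candidates (such as $(l_{32},l_{01})=(5,3),(6,4),(7,2),(9,3)$) by checking Propositions \ref{fanoprop} and \ref{gorensteincor2}; it is only in this enumeration that $l_{32}>12$ does its work. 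To repair your proof you would need to supply an argument of this kind (or another genuine use of $l_{32}>12$) for the $(1,1,\ast)$ shape.
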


\begin{proof}
    We use the notation of Proposition \ref{gorensteincor2}. By Lemma \ref{condition 2 type C}, we have $n=3$, $d_3=2$ and
    $$
    3\geq\dfrac{1}{l_{01}}+\dfrac{1}{l_{11}}+\dfrac{1}{l_{21}}>\dfrac{11}{6}.
    $$
    In particular $l_{11}=1$ and either $l_{21}=l_{01}=2$ or $l_{21}=1$.
    In the first case, by Construction \ref{cons C} we have
    $$
    \dfrac{d_{32}}{3}>4\dfrac{d_{32}}{l_{32}}>d_{01}-1>0\implies d_{32}>3.
    $$
    Furthermore,
    $$
    \alpha_2=\dfrac{4}{4d_{32}-d_{01}l_{32}+l_{32}}\implies \alpha_2|4,
    $$
    but $\dfrac{1-\alpha_2}{2}\in\Z$ so $\alpha_2=1$ and we have $\dfrac{1-d_{32}}{l_{32}}\in\Z$, contradicting $3<d_{32}<l_{32}$.
    In the second case, we have
    \begin{align*}
    &2\dfrac{d_{32}}{l_{32}}>\dfrac{d_{01}}{l_{01}}>2\dfrac{d_{31}}{l_{31}}\implies 2\nu-\dfrac{1}{l_{01}}-\dfrac{2}{l_{32}}>\lambda_0>2\nu-\dfrac{1}{l_{01}}-\dfrac{2}{l_{31}}\\
    &\implies \dfrac{1}{l_{01}}+\dfrac{2}{l_{32}}<2\nu-\lambda_0<\dfrac{1}{l_{01}}+\dfrac{2}{l_{31}}\implies l_{31}\leq 3.
    \end{align*}
    If $l_{31}=3$ then $l_{01}=2$ and $2\nu-\lambda_0=1$, hence we have
    $$
    \dfrac{-1+d_{01}\mu}{2}=\lambda_0=2\nu-1=\dfrac{2d_{31}\mu-1}{3}\implies(3d_{01}-4d_{31})\mu=1\implies \mu=1.
    $$
    Then, by Lemma \ref{unit fraction type c} we get
    $$
    \dfrac{2}{l_{32}}+\dfrac{1}{\alpha_2+1}=\dfrac{1}{2}.
    $$
    We compute the solutions of the equation via Remark \ref{unit fraction remark} and we see that none of them satisfy the conditions of both Proposition \ref{fanoprop} and \ref{gorensteincor2}.
    If $l_{31}=2$ then $2\nu-\lambda_0=1$ and we have
    $$
    \dfrac{\lambda_0+1}{2}=\nu=\dfrac{\mu+1}{2}\implies \lambda=\mu\implies \mu=\dfrac{d_{01}\mu-1}{l_{01}}\implies l_{01}=d_{01}-\dfrac{1}{\mu}\in\Z,
    $$
    hence $\mu=1$. Via Remark \ref{unit fraction remark} we compute the solutions of
    $$
    \dfrac{1}{l_{01}}+\dfrac{2}{l_{32}}+\dfrac{1}{\alpha_2+1}=1,
    $$
    and we see that none of them satisfy the conditions of both Proposition \ref{fanoprop} and \ref{gorensteincor2}.
    If $l_{31}=1$ then $\nu=1$, $\mu=\dfrac{l_{32}-1}{d_{32}}$, $\lambda_0=0,1$. If $\lambda_0=0$ then $d_{01}=\mu=1$. As above, by Remark \ref{unit fraction remark} we compute all solutions of
    $$
    \dfrac{1}{l_{01}}+\dfrac{2}{l_{32}}+\dfrac{2}{\alpha_2+1}=2
    $$
    and we see that none of them satisfy the conditions of both Proposition \ref{fanoprop} and \ref{gorensteincor2}.
    If $\lambda_0=1$ then $l_{01}=d_{01}-1$, and hence
    $$
    \alpha_1=\dfrac{2d_{01}\mu-1}{-d_{01}}=-2\mu+\dfrac{1}{d_{01}}\in\Z\implies d_{01}=1, l_{01}=\mu-1.
    $$
    It follows that
    \begin{align*}
    &\alpha_2+\mu\dfrac{1-d_{32}\alpha_2}{l_{32}}=\dfrac{\mu-1}{d_{32}(\mu-1)-d_{32}-1}=\dfrac{1}{d_{32}-\dfrac{d_{32}+1}{\mu-1}}\in\Z\\
    &\implies d_{32}-\dfrac{d_{32}+1}{\mu-1}=\dfrac{1}{\alpha_2+\mu\beta_2}\implies \mu\geq 3\implies d_{32}-\dfrac{d_{32}+1}{2}=\\
    &=\dfrac{d_{32}-1}{2}\leq\dfrac{1}{\alpha_2+\mu\beta_2}\implies d_{32}=1,2.
    \end{align*}
    If $d_{32}=1$ then
    $$
    1-\dfrac{2}{\mu-1}=\dfrac{1}{k}\implies \dfrac{k-1}{k}=\dfrac{2}{\mu-1}\implies \mu=4,5\implies (l_{32},l_{01})=(5,3),(6,4),
    $$
    If $d_{32}=2$ then
    $$
    2-\dfrac{3}{\mu-1}=\dfrac{1}{k}\implies\mu=3,4\implies(l_{32},l_{01})=(7,2),(9,3).
    $$
    By direct computation, we see that none of the four possibilities satisfy the conditions of both Proposition \ref{fanoprop} and \ref{gorensteincor2}.
    We conclude that there are no Gorenstein Fano threefolds of Type C arising form Construction \ref{p2cons T1} with $l_{32}>12$.
\end{proof}

\begin{proof}[Proof of Theorem \ref{big theorem}, Type C]
    By Lemma \ref{condition 1 type C}, there are 10 families of Gorenstein Fano threefolds of Type C arising form Construction \ref{p2cons T1} with $l_{31}=l_{32}$.
    By Lemma \ref{no solution above 12}, there are no Gorenstein Fano threefolds of Type C arising form Construction \ref{p2cons T1} with $l_{32}>12$.
    Assume now that $12\geq l_{32}>l_{31}$.
    By Lemma \ref{unit fraction type c}, we have
    \begin{itemize}
        \item $0<\nu<d_{32}<l_{32}\leq 12$,
        \item $0\leq d_{31}<l_{31}<l_{32}\leq 12$,
        \item $\lambda_i,\nu<\mu<l_{32}\leq 12$,
        \item $\sum_{i=3}^n d_i\leq 5$,
        \item $\lambda_0-\lambda_1-\lambda_2-\sum_{i=4}^n d_i\lambda_i+\sum_{i=3}^n d_i<d_3\nu\leq 60$.
    \end{itemize}
   \noindent
    For each possible fixed value of $ d_3\nu-\lambda_0+\lambda_1+\lambda_2+\sum_{i=4}^n d_i\lambda_i-\sum_{i=3}^n d_i$ we follow the procedure of Remark \ref{unit fraction remark} to compute the solutions of
    \begin{align*}
    &\dfrac{1}{l_{01}}+\dfrac{1}{l_{11}}+\dfrac{1}{l_{21}}+\sum_{i=3}^n\dfrac{d_i}{l_{i1}}+\dfrac{\mu(d_3\nu-\lambda_0+\lambda_1+\lambda_2+\sum_{i=4}^n d_i\lambda_i-\sum_{i=3}^n d_i)}{\alpha_2+\mu}=\\
    &=d_3\nu-\lambda_0+\lambda_1+\lambda_2+\sum_{i=4}^n d_i\lambda_i.
    \end{align*}
     Lastly, we check which solutions satisfy the conditions of Proposition \ref{fanoprop} and \ref{gorensteincor2}, identifying 25 families of Gorenstein Fano threefolds of Type C arising form Construction \ref{p2cons T1} with $12\geq l_{32}>l_{31}$.
\end{proof}

\begin{proof}[Proof of Theorem \ref{big theorem}, completeness and non-redundancy] By Proposition \ref{we get all}, every non-toric, $\Q$-factorial, projective threefold of Picard number one, invariant by the action of a one-dimensional torus and with a geometric maximal orbit quotient over $\Pp_2$ is $\K^*$-equivariantly isomorphic to an explicit $\T$-variety arising form Construction \ref{p2cons T1}. By Lemma \ref{unique moq}, if two such varieties are $\K^*$-equivariantly isomorphic then they share the same degree vector $d=(d_3,\dots,d_n)$, they are of the same type and they show the same isotropy orders among their points. Furthermore, they must share the same class group and Hilbert function. By checking the classification lists of Section \ref{lists section}, we see that no two families satisfy all of the above requirements. Hence, varieties of different families are not $\K^*$-equivariantly isomorphic.
\end{proof}

\section{Classification lists}\label{lists section}

 We describe the families determined in Section \ref{proof of theorem} in terms of Cox ring data. We write $f_i, f'_j, f''_k\in\C[T_0,T_1,T_2]$ to denote pairwise distinct, irreducible, homogeneous polynomials of degree $i,j,k$ respectively. Recall that the polynomials have to be chosen in such a way that the corresponding $X$ is an explicit $\T$-variety, see Proposition \ref{T-conditions prop 2}. For each family, we provide anticanonical degree and Hilbert function of its varieties.

\begin{lem}
    Let $X(g,P)$ be an explicit $\T$-variety arising from Construction \ref{p2cons T1}. With the notation of Definition \ref{fwv} and with
    $$-\mathcal{K}_X=(w_X,\eta_X)\in\Cl(X)=\Z\oplus\bigoplus_{i=1}^{q}\Z/a_i\Z,$$
    the anticanonical degree of $X$ is given according to type by
    \begin{itemize}
        \item[A:] \quad $-\mathcal{K}_X^3=w_X^3\dfrac{l_{31}\cdots l_{n1}}{a_1\cdots a_q w_{01}w_{11}w_{21}w_1},$
        \item[B:] \quad $-\mathcal{K}_X^3=w_X^3\dfrac{l_{31}\cdots l_{n1}}{a_1\cdots a_qw_{01}w_{02}w_{11}w_{21}},$
        \item[C:] \quad $-\mathcal{K}_X^3=w_X^3\dfrac{l_{31}l_{32}l_{41}\cdots l_{n1}}{a_1\cdots a_qw_{01}w_{11}w_{21}}.$
    \end{itemize}
\end{lem}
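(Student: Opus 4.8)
The plan is to push everything onto the ambient fake weighted projective space $Z=Z(P)$. Fix the free generator $H$ of $\Cl(X)/\tors$ provided by Convention~\ref{weight conv}, so that $-\mathcal{K}_X=w_XH$ modulo torsion. Since $X$ is $\Q$-factorial of Picard number one, torsion divisor classes are numerically trivial, hence $-\mathcal{K}_X^3=w_X^3\,(\iota^*H)^3$ for the inclusion $\iota\colon X\hookrightarrow Z$. By Proposition~\ref{T-conditions prop}, $X$ is the complete intersection $V_Z(h_3,\dots,h_n)$, and $\dim Z=n+1=3+(n-2)$, so the projection formula gives
\[
-\mathcal{K}_X^3 \;=\; w_X^3 \, H^{n+1} \, \prod_{i=3}^n \delta_i ,
\]
computed on $Z$, where $\delta_i\in\Z_{>0}$ is the free part of the $K$-degree $\deg(h_i)$.

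First I would compute $H^{n+1}$. The columns $v_0,\dots,v_{n+1}$ of $P$ are the primitive ray generators of the complete simplicial fan of $Z$; modulo torsion the class of the $i$-th toric boundary divisor equals $w_iH$, where $w(P)=\mu^{-1}\tilde w(P)$ is the reduced fake weight vector of Definition~\ref{fwv}. On a complete simplicial toric variety the product of the boundary divisors along the rays of a maximal cone equals the reciprocal of that cone's multiplicity; applied to the maximal cone on $\{v_j : j\ne k\}$, whose multiplicity is $\tilde w_k$, this gives $\bigl(\prod_{j\ne k} w_j\bigr) H^{n+1} = \tilde w_k^{-1}$, so $H^{n+1} = 1/\bigl(\mu \prod_j w_j\bigr)$. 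The $(n+1)\times(n+1)$ minors of $P$ are exactly the $\pm\tilde w_j$, so their gcd $\mu$ equals $\lvert\Cl(Z)_{\tors}\rvert = a_1\cdots a_q$; together with the explicit reduced fake weight vectors listed in Lemma~\ref{weight lem}, this yields, according to type,
\[
H^{n+1} \;=\; \frac{1}{a_1\cdots a_q\, w_{01}w_{11}\cdots w_{n1}w_1},\qquad \frac{1}{a_1\cdots a_q\, w_{01}w_{02}w_{11}\cdots w_{n1}},\qquad \frac{1}{a_1\cdots a_q\, w_{01}w_{11}w_{21}w_{31}w_{32}w_{41}\cdots w_{n1}} .
\]

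Next I would compute the degrees $\delta_i$. By Constructions~\ref{cons A}, \ref{cons B} and~\ref{cons C}, the term $T_{i1}^{l_{i1}}$ occurs in $h_i$, so $\delta_i = l_{i1} w_{i1}$ for every $i\ge 3$ in Types~A and B and for every $i\ge 4$ in Type~C, whereas in Type~C the binomial term $T_{31}^{l_{31}}T_{32}^{l_{32}}$ of $h_3$ gives $\delta_3 = l_{31}w_{31} + l_{32}w_{32}$. In Type~A one then has $\prod_{i=3}^n\delta_i = (l_{31}\cdots l_{n1})\prod_{i=3}^n w_{i1}$, and the factors $w_{31},\dots,w_{n1}$ cancel against those in $H^{n+1}$, leaving
\[
-\mathcal{K}_X^3 \;=\; w_X^3\,\frac{l_{31}\cdots l_{n1}}{a_1\cdots a_q\, w_{01}w_{11}w_{21}w_1};
\]
the Type~B formula comes out the same way. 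In Type~C the factors $\delta_4,\dots,\delta_n$ cancel identically, and what remains is to simplify $\delta_3/(w_{31}w_{32}) = (l_{31}w_{31}+l_{32}w_{32})/(w_{31}w_{32})$ into the numerator $l_{31}l_{32}$, using the explicit values of $\tilde w_{31}$ and $\tilde w_{32}$ from Lemma~\ref{weight lem}.

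I expect that last simplification to be the main obstacle: the single monomials $T_{i1}^{l_{i1}}$ of Types~A and B make the cancellation purely formal (a product reorganizes as $\prod l_{i1}w_{i1} = (\prod l_{i1})(\prod w_{i1})$), whereas the binomial term of Type~C does not factor this way, so forcing the numerator into the stated shape genuinely needs the weight identities of Lemma~\ref{weight lem} rather than a bookkeeping of products. A secondary point requiring care is the intersection identity $H^{n+1} = 1/(a_1\cdots a_q \prod_j w_j)$ on a fake weighted projective space — in particular $a_1\cdots a_q = \gcd(\tilde w_0,\dots,\tilde w_{n+1})$ and the numerical triviality of torsion — which is standard (cf.\ \cite{toricbook}) but underpins every denominator appearing above.
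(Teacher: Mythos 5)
Your reduction for Types A and B is correct, and in substance it is the same computation the paper performs: the paper invokes \cite[Construction 3.3.3.4]{MR3307753} for the identity $\omega_{01}\cdot\omega_{11}\cdot\omega_{21}=\tfrac{l_{31}\cdots l_{n1}}{a_1\cdots a_q w_1}$ and then kills torsion exactly as you do, whereas you re-derive that identity on $Z(P)$ from the cone-multiplicity formula together with $[X]=\prod_{i\geq 3}\deg(h_i)$ and $\mu=a_1\cdots a_q$; both routes give the stated formulas for A and B, and your intermediate facts (numerical triviality of torsion, $H^{n+1}=1/(\mu\prod_j w_j)$, $\mu=\lvert\Cl(Z)_{\tors}\rvert$) are all sound.

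The step you defer in Type C is, however, a genuine gap, and it cannot be closed: your (correct) intermediate expression is $-\mathcal{K}_X^3=w_X^3\,(l_{31}w_{31}+l_{32}w_{32})\,l_{41}\cdots l_{n1}/\bigl(a_1\cdots a_q\,w_{01}w_{11}w_{21}w_{31}w_{32}\bigr)$, and matching the displayed Type C formula would require the identity $l_{31}w_{31}+l_{32}w_{32}=l_{31}l_{32}w_{31}w_{32}$, which is false in general and in particular fails for varieties in the paper's own lists. For family 40 ($n=3$, $l_{31}=l_{32}=1$, all weights $1$, $w_X=3$, a quadric in $\Pp_4$) one has $-\mathcal{K}_X^3=27\cdot 2=54$, which is your expression, while the printed Type C formula yields $27$; likewise family 45 gives $36$ (a degree-$4$ hypersurface in $\Pp(2,2,2,3,1)$ with $w_X=6$) versus $27$, and family 56 gives $24$ versus $16$. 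So no appeal to the weight identities of Lemma \ref{weight lem} will carry out the simplification you promise: the tables follow your expression, and the discrepancy lies in the printed Type C formula itself (the binomial degree $l_{31}\omega_{31}+l_{32}\omega_{32}$ of $h_3$ does not factor the way the single monomials of Types A and B do). As a proof of the statement as printed, your Type C argument therefore does not go through; you should either stop at the formula you actually derived and record that it disagrees with the stated one, or not assert that the remaining simplification is merely a matter of applying Lemma \ref{weight lem}.
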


\begin{proof}
    We prove the lemma for Type A (B and C follow analogously).
    First, by \cite[Construction 3.3.3.4]{MR3307753} we have
    \begin{align*}
    \omega_{01}\cdot \omega_{11} \cdot \omega_{21}=(\omega_{01}\cdot \omega_{11}\cdot \omega_{21}\cdot \omega_{31}\cdots \omega_{n1})l_{31}\cdots l_{n1}=\dfrac{l_{31}\cdots l_{n1}}{a_1\cdots a_qw_1}.
    \end{align*}
    Furthermore, for every $\omega=(w,\eta)\in\Cl(X)$ we have
    $$
    a_1\cdots a_q \omega = a_1\cdots a_q w
    % [inline block 0: 25 envs, 71384 chars -> data_tex | \begin{bmatrix}         1\\...]

    \end{center}
    }
\end{classification}
\newpage

\clearpage
\begin{bibdiv}

\begin{biblist}

\bib{toricbook}{book}{
  title={Toric Varieties},
  author={D.A. Cox},
  author={J.B. Little},
  author={H.K. Schenck},
  isbn={9780821848197},
  lccn={2010053054},
  series={Graduate studies in mathematics},
  url={https://books.google.de/books?id=AoSDAwAAQBAJ},
  year={2011},
  publisher={American Mathematical Society}
}

\bib{MR3307753}{book}{
   author={Arzhantsev, Ivan},
   author={Derenthal, Ulrich},
   author={Hausen, J\"{u}rgen},
   author={Laface, Antonio},
   title={Cox rings},
   series={Cambridge Studies in Advanced Mathematics},
   volume={144},
   publisher={Cambridge University Press, Cambridge},
   date={2015},
   pages={viii+530},
   isbn={978-1-107-02462-5},
   review={\MR{3307753}},
}

\bib{MR4509960}{article}{
   author={B\"{a}uerle, Andreas},
   author={Hausen, J\"{u}rgen},
   title={On Gorenstein Fano threefolds with an action of a two-dimensional
   torus},
   journal={SIGMA Symmetry Integrability Geom. Methods Appl.},
   volume={18},
   date={2022},
   pages={Paper No. 088, 42},
   review={\MR{4509960}},
   doi={10.3842/SIGMA.2022.088},
}

\bib{MR4031105}{article}{
   author={Hausen, J\"{u}rgen},
   author={Hische, Christoff},
   author={Wrobel, Milena},
   title={On torus actions of higher complexity},
   journal={Forum Math. Sigma},
   volume={7},
   date={2019},
   pages={Paper No. e38, 81},
   review={\MR{4031105}},
   doi={10.1017/fms.2019.35},
}

\bib{MR3667033}{article}{
   author={Hausen, J\"{u}rgen},
   author={Keicher, Simon},
   author={Wolf, R\"{u}diger},
   title={Computing automorphisms of Mori dream spaces},
   journal={Math. Comp.},
   volume={86},
   date={2017},
   number={308},
   pages={2955--2974},
   issn={0025-5718},
   review={\MR{3667033}},
   doi={10.1090/mcom/3185},
}

\bib{MR2760660}{article}{
   author={Kasprzyk, Alexander M.},
   title={Canonical toric Fano threefolds},
   journal={Canad. J. Math.},
   volume={62},
   date={2010},
   number={6},
   pages={1293--1309},
   issn={0008-414X},
   review={\MR{2760660}},
   doi={10.4153/CJM-2010-070-3},
}

\bib{MR3509936}{article}{
   author={Bechtold, Benjamin},
   author={Hausen, J\"{u}rgen},
   author={Huggenberger, Elaine},
   author={Nicolussi, Michele},
   title={On terminal Fano 3-folds with 2-torus action},
   journal={Int. Math. Res. Not. IMRN},
   date={2016},
   number={5},
   pages={1563--1602},
   issn={1073-7928},
   review={\MR{3509936}},
   doi={10.1093/imrn/rnv190},
}

\bib{HW}{article}{
    author={Christoff Hische},
    author={Milena Wrobel},
    title={The anticanonical complex},
    status={preprint},
    year={2019}
}

\bib{HW2}{article}{
        
      author={Hische, Christoff},
      author={Wrobel, Milena},
      title={On Fano threefolds with $\mathbb{C}^*$-action},
      year={2019},
      status={preprint}
}
\end{biblist}
\end{bibdiv}

\end{document}